\theoremstyle{plain}
\newtheorem{theorem}{Theorem}[section]
\newtheorem{proposition}[theorem]{Proposition}
\newtheorem{lemma}[theorem]{Lemma}
\newtheorem{corollary}[theorem]{Corollary}
\newtheorem{conjecture}[theorem]{Conjecture}
\theoremstyle{definition}
\newtheorem{definition}[theorem]{Definition}
\newtheorem{remark}[theorem]{Remark}
\newtheorem{example}[theorem]{Example}
\newtheorem{examples}[theorem]{Examples}
\newcommand\E{\mathbb{E}}
\newcommand\Z{\mathbb{Z}}
\newcommand\R{\mathbb{R}}
\newcommand\T{\mathbb{T}}
\newcommand\C{\mathbb{C}}
\newcommand\N{\mathbb{N}}
\newcommand\F{\mathbb{F}}
\newcommand\Hom{\operatorname{Hom}}
\newcommand\eps{\varepsilon}
\newcommand\Poly{\mathrm{Poly}}
\newcommand\TV{\operatorname{TV}}
\newcommand\Sym{\mathrm{Sym}}
\begin{document}

%%%%% To ease editing, for IMPAN journals add:

\baselineskip=17pt

%%%%%%%%%%%%%%%%

\title[Non-Abramov system]{A Host--Kra $\F_2^\omega$-system of order $5$ that is not Abramov of order $5$, and non-measurability of the inverse theorem for the $U^6(\F_2^n)$ norm}

\author[A. Jamneshan]{Asgar Jamneshan}
\address{Mathematical Institute\\ University of Bonn\\ 
Endenicher Allee 60 \\
53115 Bonn, Germany }
\email{ajamnesh@math.uni-bonn.de}

\author[O. Shalom]{Or Shalom}
\address{Department of Mathematics\\ Bar Ilan University \\ 
Ramat Gan \\
5290002, Israel}
\email{Or.Shalom@math.biu.ac.il}

\author[T. Tao]{Terence Tao}
\address{Department of Mathematics\\ University of California \\ 
Los Angeles \\
CA 90095-1555, USA}
\email{tao@math.ucla.edu}

\date{\today}

\begin{abstract}  It was conjectured by Bergelson, Tao, and Ziegler \cite{btz} that every Host--Kra $\F_p^\omega$-system of order $k$ is an Abramov system of order $k$.  This conjecture has been verified for $k \leq p+1$.  In this paper we show that the conjecture fails when $k=5, p=2$.  We in fact establish a stronger (combinatorial) statement, in that we produce a bounded function $f: \F_2^n \to \C$ of large Gowers norm $\|f\|_{U^6(\F_2^n)}$ which (as per the inverse theorem for that norm) correlates with a non-classical quintic phase polynomial $e(P)$, but with the property that all such phase polynomials $e(P)$ are ``non-measurable'' in the sense that they cannot be well approximated by functions of a bounded number of random translates of $f$. 
A simpler version of our construction can also be used to answer a question of Candela, Gonz\'alez-S\'anchez, and Szegedy \cite{CGSS}. 
\end{abstract}

\subjclass[2020]{Primary 37A05, 11B30; Secondary 28D15, 37A35.} 

\keywords{}

\maketitle

\section{Introduction}

Let $p$ be a prime, and let $k \geq 1$.  We consider two statements associated to these parameters: the (now-proven) inverse conjecture \cite{green-tao-inverseu3}, \cite{samorodnitsky} \cite{green-tao-finite} for the Gowers norms in characteristic $p$, and the Bergelson--Tao--Ziegler conjecture \cite{btz}.  We begin with the former.  Given any finite abelian group $G = (G,+)$, we define the Gowers uniformity norm $\|f\|_{U^{k+1}(G)} \geq 0$ of a function $f \colon G \to \C$ by the formula
$$ \|f\|_{U^{k+1}(G)}^{2^{k+1}} \coloneqq \E_{x,h_1,\dots,h_{k+1} \in G} \prod_{\omega \in \{0,1\}^{k+1}} {\mathcal C}^{|\omega|} f\left(x + \omega \cdot \vec h\right)$$
where ${\mathcal C} \colon z \mapsto \bar{z}$ denotes complex conjugation, $\omega = (\omega_1,\dots,\omega_{k+1})$, $|\omega| \coloneqq \omega_1 + \dots + \omega_{k+1}$, $\vec h \coloneqq (h_1,\dots,h_{k+1})$, $\omega \cdot \vec h$ denotes the dot product
$$ \omega \cdot \vec h \coloneqq \omega_1 h_1 + \dots + \omega_{k+1} h_{k+1},$$
$\E_{x \in A} \coloneqq \frac{1}{|A|} \sum_{x \in A}$ denotes the averaging operation, and $|A|$ denotes the cardinality of a finite set $A$.  If $P \colon G \to \T$ is a function taking values in the unit circle $\T \coloneqq \R/\Z$, then we have $\|e(P)\|_{U^{k+1}(G)} \leq 1$, with equality precisely when $P$ is a \emph{(non-classical) polynomial of degree $k$}, as defined in Definition \ref{poly-def}; here $e \colon \T \to \C$ is the fundamental character $e(\theta) \coloneqq e^{2\pi i \theta}$.  The space of polynomials $P \colon G \to \T$ of degree at most $k$ is an abelian group which we denote $\Poly^k(G)$.  By convention, $\Poly^0(G)$ will denote the constant functions $\T$, and $\Poly^k(G) = \{0\}$ for all $k<0$ (thus non-zero constants have degree $0$, and zero has degree $-\infty$).

For each $p,k$, we can then form the following claim:

\begin{conjecture}[Inverse conjecture for the Gowers norm]\label{inv-conj}  For every $\eta>0$ there exists $c=c(k,p,\eta)>0$ such that, whenever $G = \F_p^n$ is an elementary abelian $p$-group and $f \colon G \to {\mathbb D}$ is a function taking values in the unit disk ${\mathbb D} \coloneqq \{ z \in \C: |z| \leq 1\}$ and $\|f\|_{U^{k+1}(G)} \geq \eta$, there exists $P \in \Poly^k(G)$ such that $|\E_{x \in G} f(x) e(-P(x))| \geq c$.
\end{conjecture}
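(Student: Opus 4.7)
The plan is to proceed by induction on $k$. The base case $k=1$ -- where $\Poly^1(G)$ consists of affine functions on $G$ -- follows from Fourier analysis on the elementary abelian group: the Plancherel-type identity $\|f\|_{U^2(G)}^4 = \sum_{\xi \in \widehat G} |\hat f(\xi)|^4$ combined with the bound $\|f\|_2 \leq 1$ forces $\max_\xi |\hat f(\xi)| \geq \eta^2$, and each character $\xi$ is exactly the exponential of a degree-$1$ phase polynomial.

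For the inductive step, I would exploit the standard derivative identity
\[ \|f\|_{U^{k+1}(G)}^{2^{k+1}} = \E_{h \in G} \|\Delta_h f\|_{U^k(G)}^{2^k}, \]
where $\Delta_h f(x) \coloneqq f(x+h)\overline{f(x)}$. A pigeonhole argument produces a set $H \subseteq G$ of density $\gtrsim \eta^{O(1)}$ with $\|\Delta_h f\|_{U^k(G)} \gtrsim \eta^{O(1)}$ for each $h \in H$, and the inductive hypothesis then supplies polynomials $P_h \in \Poly^{k-1}(G)$ satisfying
\[ \left| \E_{x \in G} \Delta_h f(x)\, e(-P_h(x)) \right| \gtrsim \eta^{O(1)}. \]

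The central task is to ``integrate'' the family $(P_h)_{h \in H}$ into a single polynomial of degree $k$. A Cauchy--Schwarz rearrangement, coupling the above correlation at $h_1$, $h_2$, and $h_1+h_2$, yields on a large subset of $H \times H$ an approximate cocycle relation placing $P_{h_1+h_2}(x) - P_{h_1}(x+h_2) - P_{h_2}(x)$ in $\Poly^{k-2}(G)$ modulo a Gowers-uniform error. The main obstacle -- and the step that separates the low-characteristic case from the high-characteristic one -- is to promote this approximate cocycle to a genuine one and then solve the associated cohomology equation, producing some $P \in \Poly^k(G)$ whose derivatives $\Delta_h P$ recover $P_h$ modulo lower-order terms. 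This is precisely where non-classical polynomials become unavoidable: for small $p$ the classical polynomial ring lacks enough ``antiderivatives'', and one must pass to the full $\T$-valued group $\Poly^k(G)$ in order to close the induction. A final Cauchy--Schwarz step then converts the structural match $\Delta_h P \approx P_h$ back into the correlation $|\E_{x \in G} f(x) e(-P(x))| \geq c$, with $c = c(k,p,\eta) > 0$ tracked through the induction (typically with a tower-type dependence on $\eta$).
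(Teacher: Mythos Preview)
The paper does not prove this statement. Conjecture~\ref{inv-conj} is stated as a known result established elsewhere (the paper cites \cite{taoziegler} for the general case, with earlier partial cases in \cite{green-tao-inverseu3}, \cite{samorodnitsky}, \cite{green-tao-finite}), and the paper's own contribution lies in the opposite direction: constructing a counterexample to the \emph{strong} inverse conjecture. So there is no proof in the paper to compare your proposal against.

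Assessing your outline on its own merits: the base case and the reduction via $\|f\|_{U^{k+1}}^{2^{k+1}} = \E_h \|\Delta_h f\|_{U^k}^{2^k}$ are correct and standard. But the paragraph beginning ``The central task is to `integrate' the family $(P_h)_{h\in H}$'' is not a proof; it is a restatement of the problem. You correctly identify that one obtains an approximate additive quadruple structure on the $P_h$ and that the goal is to upgrade this to an exact cocycle and then antidifferentiate. However, you offer no mechanism for either step. Upgrading the $1\%$ structure (approximate cocycle on a dense set of $(h_1,h_2)$) to $99\%$ structure is already a substantial ``linearization'' argument, and in low characteristic the antidifferentiation step runs into genuine obstructions: the relevant cohomology does \emph{not} vanish in the classical polynomial group, and passing to non-classical polynomials is not automatic---one needs to show that the specific cocycles arising from this procedure are coboundaries in the larger group, which is precisely the content of the exact roots machinery in \cite{taoziegler}. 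Your sentence ``one must pass to the full $\T$-valued group $\Poly^k(G)$ in order to close the induction'' names the destination but not the route.

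In short, what you have written is the correct table of contents for a proof, but the chapter that does the work---symmetrization and integration of the approximate cocycle in low characteristic---is missing entirely.
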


This conjecture has now been established for all values of $k, p$ \cite{taoziegler}.  The case $k=1$ is trivial, the case $k=2$ follows from standard Fourier analytic calculations, and the case $k=3$ was previously established in \cite{green-tao-inverseu3} (for $p>2$) and \cite{samorodnitsky} (for $p=2$).  In \cite{taozieglerhigh}, this conjecture was shown to be a consequence of a conjecture in ergodic theory which we now pause to introduce.  Define an \emph{$\F_p^\omega$-system} to be a (countably generated) probability space $(X,\mu)$ equipped with a measure-preserving action $T^h \colon X \to X$, $h \in \F_p^\omega$ of the group $\F_p^\omega \coloneqq \varinjlim \F_p^n$ (the vector space over $\F_p$ with a countably infinite basis).  One can define analogues of the Gowers uniformity norms $\|f\|_{U^{k+1}(X)}$ (known as \emph{Gowers--Host--Kra seminorms}) for $f \in L^\infty(X)$, and one can similarly define the group $\Poly^k(X)$ of polynomials $P \colon X \to \T$ (defined up to almost everywhere equivalence) as
$$\Poly^k(X) \coloneqq \{ P: \|e(P)\|_{U^{k+1}(X)} = 1 \};$$
see \cite{taozieglerhigh} for details.  An $\F_p^\omega$-system is said to be \emph{of order at most $k$} if $\| f\|_{U^{k+1}(X)} > 0$ for any non-zero element $f$ of $L^\infty(X)$ (where elements of the latter are defined up to almost everywhere equivalence).  We then have

\begin{conjecture}[Bergelson--Tao--Ziegler conjecture]\label{btz-conj}\cite[Remark 1.25]{btz}  Let $X$ be an ergodic $\F_p^\omega$-system of order at most $k$.  Then the $\sigma$-algebra of $X$ is generated (modulo null sets) by the polynomials in $\Poly^k(X)$.
\end{conjecture}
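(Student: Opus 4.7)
The plan is to argue by induction on $k$, following the Host--Kra structure theory adapted to the $\F_p^\omega$ setting in \cite{btz}. The base case $k=1$ is classical: an ergodic $\F_p^\omega$-system of order at most $1$ is (an inverse limit of) a rotation on a compact abelian group, whose $\sigma$-algebra is generated by characters, i.e.\ degree-$1$ phase polynomials. For the inductive step, suppose the claim holds for order $k-1$, and let $X$ be an ergodic $\F_p^\omega$-system of order at most $k$. Let $Y$ denote the Host--Kra factor of $X$ of order at most $k-1$; by the inductive hypothesis, the $\sigma$-algebra of $Y$ is generated by $\Poly^{k-1}(Y)$. By the structure theorem, the extension $X \to Y$ is a compact abelian group extension, so one may write $X \cong Y \times_\rho U$ for some compact abelian group $U$ and measurable cocycle $\rho \colon \F_p^\omega \times Y \to U$. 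The assumption that $X$ has order at most $k$ forces $\rho$ to be a cocycle of type $k$, in the sense that a certain $(k+1)$-fold iterated coboundary of $\rho$ is trivial.

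The core step is then an Abramov-type theorem: every type-$k$ cocycle $\rho$ on $Y$ should be cohomologous (possibly after enlarging $U$ by a short exact sequence of $p$-torsion abelian groups) to a cocycle of the form $(h,x) \mapsto P(T^h x) - P(x)$ for some measurable polynomial function $P \colon X \to U$ of degree $k$. Granting this, for each character $\chi \in \widehat{U}$ the composition $\chi \circ P$ is a degree-$k$ phase polynomial on $X$, and these, together with $\Poly^{k-1}(Y)$ lifted to $X$, separate the fibres of the extension and hence generate the full $\sigma$-algebra of $X$, completing the induction.

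The main obstacle is the Abramov step, which amounts to a vanishing statement for a cohomology group classifying type-$k$ cocycles on $Y$ modulo polynomial coboundaries. This has been carried out in \cite{btz} and subsequent work for $k \leq p+1$ by an explicit decomposition of cocycles into components valued in $\Z/p^j\Z$ for varying $j$, using the fact that low-depth non-classical polynomials still resemble classical ones closely enough for a direct cohomological analysis. For larger $k$ the key concern is that non-classical degree-$k$ polynomials form a strictly richer family than classical ones, and the cohomology may well contain ``exotic'' classes whose top-level $\Z/p^j\Z$-symbol is genuinely higher-order and not realisable as any polynomial coboundary, even after enlarging the structure group by further $p$-torsion extensions. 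Pinpointing whether such obstructions actually arise — and, if so, whether they can be killed by further enlargements — is the technical heart of the problem, and is the step one would expect to be hardest; indeed this is precisely the point at which the paper shows the inductive scheme must break down when $k=5$, $p=2$.
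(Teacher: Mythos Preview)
The statement you are attempting to prove is a \emph{conjecture}, and the paper does not prove it; on the contrary, the paper's main contribution (Theorem \ref{counter} combined with the contrapositive of Theorem \ref{implication}) is that Conjecture \ref{btz-conj} is \emph{false} when $p=2$ and $k=5$. So there is no ``paper's own proof'' to compare against, and any purported proof of the conjecture in full generality must contain an error.

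That said, your write-up is not really a proof but a proof \emph{strategy}, and as such it is accurate and well-aimed. You correctly identify the inductive scheme from \cite{btz}, the role of the order-$(k-1)$ factor, the abelian-extension structure, and the reduction to an Abramov-type statement about type-$k$ cocycles. You also correctly locate the potential failure point: the cohomological vanishing needed at the inductive step may not hold in low characteristic. This is exactly right, and your closing sentence shows you already see this. What the paper does is make that failure concrete: it exhibits an explicit strongly $2$-homogeneous $5$-cocycle on the $2$-step nilspace $X_2 = {\mathcal D}^2(\F_2^2)$ with values in $\frac{1}{2}\Z/\Z$ that is not a $5$-coboundary (Theorem \ref{nontriv-cocycle}), and then packages this into a finite nilspace $X_{5,5}$ whose cube structure encodes pseudo-quintic functions that defeat both Conjecture \ref{inv-conj-strong} and, via Theorem \ref{implication}, Conjecture \ref{btz-conj}.

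So the honest summary is: your outline is a correct description of the \emph{known} approach, valid for $k\le p+1$ (as in \cite{btz}, \cite{CGSS}), together with a correct diagnosis of why it cannot be pushed further unconditionally. It is not, and cannot be, a proof of the conjecture as stated.
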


We remark that the ergodicity hypothesis on $X$ can in fact be removed by ergodic decomposition, but we will not need to do so here.

In \cite{taozieglerhigh}, a variant of the Furstenberg correspondence principle was used to show that Conjecture \ref{btz-conj} implied Conjecture \ref{inv-conj} for any given choice of $p,k$.  In \cite{btz}, Conjecture \ref{btz-conj} was established in the high characteristic case $k+1 \leq p$; combining the two results, this also gave Conjecture \ref{inv-conj} in this regime.  The full case of Conjecture \ref{inv-conj} was subsequently established in \cite{taoziegler} by a different method; alternate proofs of some or all of the cases of this conjecture have since been given in \cite{milicevic}, \cite{milicevic-2}, \cite{CGSS}, \cite{tidor}, \cite{milicevic-u56}.  In particular Conjecture \ref{btz-conj} was established in \cite[Theorem 1.12]{CGSS} in the slightly larger range $k \leq p+1$ (and an alternate proof of Conjecture \ref{inv-conj} was given for all $k,p$).  We also remark that in \cite[Theorem 1.20]{btz}, a weaker version of Conjecture \ref{btz-conj} was established in which $\Poly^k(X)$ was replaced by some unspecified subalgebra of $\Poly^{C(p,k)}(X)$ for some constant $C(p,k)$ depending only on $p,k$.  We also note that several other structural results on ergodic $\F_p^\omega$-systems are known; see in particular \cite{CGSS}, \cite{jst-tdsystems}.

Although it was not explicitly noted in \cite{taozieglerhigh}, Conjecture \ref{btz-conj} in fact gives a stronger version of Conjecture \ref{inv-conj} in which the polynomial $P$ produced by the conjecture is (approximately) ``measurable'' with respect to the original function $f$ together with random shifts.  More precisely, consider the following more complicated strengthening of Conjecture \ref{inv-conj}.

\begin{conjecture}[Strong inverse conjecture for the Gowers norm]\label{inv-conj-strong}  Let $\eta>0$, and let $\eps \colon \N \to \R^+$ be a decreasing function.  Then there exists $M = M(k,p,\eta,\eps())$ such that the following holds. If $G = \F_p^n$ is an elementary abelian $p$-group and $f \colon G \to {\mathbb D}$ satisfies $\|f\|_{U^{k+1}(G)} \geq \eta$, then, drawing a random tuple $\vec h = (h_1,\dots,h_M)$ uniformly from $G^M$, we have with probability at least $1/2$ that  there exist
$1 \leq m \leq M$, $P \in \Poly^k(G)$ and a function $F \colon {\mathbb D}^{\F_p^M} \to \C$
of Lipschitz constant at most $M$ (using say the Euclidean metric on ${\mathbb D}^{\F_p^M}$), such that
$$|\E_{x \in G} f(x) e(-P(x))| \geq \frac{1}{m}$$
and
$$ \left|\E_{x \in G} e(P(x)) - F\left( \left(f(x+a \cdot \vec h)\right)_{a \in \F_p^M}\right)\right| \leq \eps(m).$$
\end{conjecture}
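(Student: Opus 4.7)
The plan is to deduce Conjecture \ref{inv-conj-strong} from Conjecture \ref{btz-conj} via the Furstenberg correspondence principle, refining the deduction of Conjecture \ref{inv-conj} from Conjecture \ref{btz-conj} carried out in \cite{taoziegler} by adding a measurability step. I argue by contradiction: suppose the conclusion fails for some $\eta>0$ and decreasing $\eps\colon \N\to\R^+$, so that for each $N\in\N$ there is a counterexample $f_N\colon G_N = \F_p^{d_N}\to\mathbb{D}$ with $\|f_N\|_{U^{k+1}(G_N)}\geq\eta$ for which no choice of $m\leq N$, $P\in\Poly^k(G_N)$, and Lipschitz $F$ (constant $\leq N$) satisfies the required bounds with probability exceeding $1/2$. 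Passing to a non-principal ultralimit over $N$ produces an ergodic $\F_p^\omega$-system $(X,\mu)$ and a function $f\in L^\infty(X)$ of $L^\infty$ norm $\leq 1$ with $\|f\|_{U^{k+1}(X)}\geq \eta$; crucially, by the construction of the correspondence, the full $\sigma$-algebra of $X$ is generated modulo null sets by the shifts $\{T^h f : h \in \F_p^\omega\}$.

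Next I apply Conjecture \ref{btz-conj} to the Host--Kra factor $Z_k(X)$, which is an ergodic $\F_p^\omega$-system of order at most $k$: its $\sigma$-algebra is generated by $\Poly^k(Z_k(X))\subseteq \Poly^k(X)$. Since $\|f\|_{U^{k+1}(X)}>0$ forces $\E(f\mid Z_k(X))\neq 0$, a character/Fourier expansion on the abelian group of polynomials produces an integer $m_0\geq 1$ and $P\in\Poly^k(X)$ with $\bigl|\int_X f\,\overline{e(P)}\,d\mu\bigr|\geq 1/m_0$. Because the $\sigma$-algebra of $X$ is generated by shifts of $f$, the bounded function $e(P)$ is measurable with respect to these shifts, and can therefore be approximated in $L^1(X)$ to arbitrary precision $\delta$ by a cylinder function of finitely many shifts; a standard mollification then replaces this by a Lipschitz function $F\colon\mathbb{D}^{\F_p^{M_0}}\to\C$ of Lipschitz constant $O(M_0)$ taking as input $(f\circ T^{a\cdot\vec h})_{a\in\F_p^{M_0}}$ for some fixed $\vec h\in(\F_p^\omega)^{M_0}$. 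Choosing $\delta = \eps(m_0+1)/4$ fixes the required $M_0$.

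The final step is to descend to the finite setting. The polynomial $P$ arises as the ultralimit of polynomials $P_N\in\Poly^k(G_N)$ (using closure of $\Poly^k$ under ultralimits, as in \cite{taoziegler}); the shifts $h_j$ descend to essentially uniformly distributed elements of $G_N$; and $F$ is unchanged. A Markov-type argument then transfers both the correlation bound $|\E_x f_N(x) e(-P_N(x))| \geq 1/(m_0+1)$ and the $L^1$ approximation between $e(P_N(\cdot))$ and $F((f_N(\cdot+a\cdot\vec h))_{a\in\F_p^{M_0}})$ of size at most $\eps(m_0+1)$, both holding with probability $\geq 1/2$ for a uniformly random $\vec h\in G_N^{M_0}$ at sufficiently large $N$. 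Taking $N$ large enough that $M_0\leq N$, $m_0+1\leq N$, and the Lipschitz constant $O(M_0)\leq N$ (padding $\vec h$ to length $N$ by ignored coordinates), this contradicts the failure of the conclusion for $f_N$.

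The main obstacle I expect lies in this final descent. The correlation bound can be handled essentially as in \cite{taoziegler}, but transferring the cylinder-function approximation from the ultralimit $X$ back down to $G_N$ in a manner compatible with a uniformly random $\vec h$ requires verifying that, in the ultralimit, the joint distribution of the shifted samples $(f_N(x + a\cdot\vec h))_{a\in\F_p^{M_0}}$ with $x,\vec h$ uniform genuinely realizes the ergodic-theoretic joint distribution on $X$ used in the previous step. This is a standard equidistribution statement (since a uniform $\vec h$ has full rank in $\F_p^{M_0}$ with overwhelming probability) but must be tracked carefully through the ultralimit machinery together with the Lipschitz constraint on $F$.
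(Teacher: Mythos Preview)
Your outline follows the paper's approach in spirit (contradiction, Furstenberg-type limit, apply BTZ to get a polynomial $P$ correlating with $f$, approximate $e(P)$ by a Lipschitz cylinder function in shifts of $f$, then descend). However, there is a genuine gap in the descent step, and it is exactly the step you treat as routine.

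You write that ``$P$ arises as the ultralimit of polynomials $P_N\in\Poly^k(G_N)$ (using closure of $\Poly^k$ under ultralimits, as in \cite{taoziegler})''. This is not how the descent works in \cite{taoziegler}, and it is not clear how to justify it directly. The limiting system $X$ is not an ultraproduct of the $G_N$; it is a symbolic system with a weak-limit measure, and a measurable polynomial $P\colon X\to\T$ has no a priori reason to come from a sequence of genuine polynomials $P_N$ on $G_N$. What \cite{taoziegler} (and the paper, in Appendix~\ref{app}) actually do is transfer the \emph{Lipschitz approximation} $F$ of $e(P)$ to the finite side, use the polynomial identity $d^{k+1}P=0$ together with the cylinder approximation to show that $F$ applied to shifts of $f_M$ is an \emph{approximate} polynomial phase on $\F_p^{n_M}$ (its multiplicative $(k{+}1)$-st derivative is close to $1$ in $L^1$), and then invoke a polynomial \emph{stability} lemma (Lemma~4.5 of \cite{taoziegler}, or \cite[Theorem 4.2]{candela-szegedy-inverse}) to produce a genuine $P_M\in\Poly^k(\F_p^{n_M})$ close to this approximate polynomial. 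That stability step is the missing ingredient in your argument; without it you have no mechanism to produce $P_N$.

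A second, more technical point: the paper does not keep $\vec h$ random until the end. It invokes an ``accurate sampling sequence'' proposition (a refinement of \cite[Proposition~3.13]{btz}) to deterministically select shifts $v_i$ that simultaneously (i) witness the failure of the conjecture for $f_M$ and (ii) have good sampling properties for the derivative averages needed in the stability step. Your acknowledged obstacle about reconciling the random $\vec h$ with the ultralimit is real, and the paper's device is precisely what handles it.
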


The numerical value of the probability $1/2$ here is inessential and could be replaced by any other constant between $0$ and $1$.  Roughly speaking, Conjecture \ref{inv-conj-strong} is a strengthening of Conjecture \ref{inv-conj} in which the polynomial $P$ produced by that conjecture is well approximated by some combination of random shifts of $f$, where the degree $\eps(m)$ of approximation can be guaranteed to be much better than the level $\frac{1}{m}$ of correlation between the polynomial $P$ and the original function $f$.  The Lipschitz property of $F$ is unimportant, since one can easily discretize $f$ to take on a bounded number of values, but we retain it for minor technical reasons\footnote{The Lipschitz property is used in Appendix \ref{app} to prove the correspondence principle (see Theorem \ref{implication} below), where it is used in the proof of Proposition \ref{prop-sampling} (cf. \cite[Proposition 3.13]{taozieglerhigh}) in the form of the Arzel\`{a}--Ascoli theorem.}. 

\begin{example}
When $k=1$, Conjecture \ref{inv-conj-strong} can be established by standard Fourier-analytic arguments, which we now briefly sketch (suppressing precise quantitative bounds in order to simplify the exposition).  If $f \colon G \to {\mathbb D}$ has large $U^2(G)$ norm, then $f$ has a large inner product with the convolution $f * f * \tilde f$, where $\tilde f(x) \coloneqq \overline{f}(-x)$.  Furthermore, this convolution can be uniformly approximated by a bounded linear combination of characters $e(\xi \cdot x)$, which is standard.  To isolate one such frequency $\xi$, one may choose a large number of random shifts $h_1,\dots,h_M$ and then, with high probability, construct a single linear combination $\lambda$ of the delta masses $\delta_{h_1},\dots,\delta_{h_M}$ such that $f * f * \tilde f * \lambda$ is uniformly\footnote{Take $\xi_0$ to be a frequency with $|\widehat f(\xi_0)|$ maximal, and define  
\[
\lambda = \frac{1}{M c_0}\sum_{j=1}^M \overline{e(\xi_0 \cdot h_j)} \delta_{-h_j},
\qquad 
c_0 = \widehat{(f*f*\tilde f)}(\xi_0),
\]
for i.i.d.\ uniform shifts $h_j$. Then $\widehat\lambda(\xi_0)=1/c_0$ exactly, while for every other significant frequency $\xi\neq\xi_0$, the value 
\[
\widehat\lambda(\xi)=\tfrac{1}{M c_0}\sum_j e((\xi-\xi_0)\cdot h_j)
\]
is an average of mean-zero, unit-modulus i.i.d. variables. By Hoeffding’s inequality and a union bound over the finitely many ``large'' Fourier coefficients, these averages are simultaneously small with high probability, so $f*f*\tilde f*\lambda$ is uniformly close to the pure character $x\mapsto e(\xi_0\cdot x)$.
} close to the character $x \mapsto e(\xi \cdot x)$.    Finally, by further random sampling of the $f * \tilde f$ factor (and increasing $M$ if necessary), one can approximate $f * f * \tilde f * \lambda$ in $L^1$ by a bounded linear combination of shifts of $f$ along linear combinations of $h_1,\dots,h_M$.  This yields the case $k=1$ of Conjecture \ref{inv-conj-strong}; we leave the details to the interested reader.

For $k=2,3$ (and $p=2$), the strong inverse conjecture is reminiscent\footnote{We are indebted to James Leng for this observation.} of the quadratic Goldreich--Levin theorem from \cite{tulsiani-wolf} (and the more recent cubic Goldreich--Levin theorem from \cite{kim-li-tidor}), which gives a polynomial (in $n$) time randomized algorithm to reconstruct the polynomial $P$ from the function $f$; however, the strong inverse conjecture is (in principle) stronger than these Goldreich--Levin type results, in that it should (after some additional effort) yield a \emph{bounded-time} (rather than polynomial-time) randomized algorithm to obtain an \emph{approximation} to the polynomial $P$.  Such algorithms are similar in spirit\footnote{We are indebted to Avi Wigderson for this remark.} to implicit (or ``local'') list decoding algorithms for Reed--Muller codes, as discussed for instance in \cite{Sudan2000}, \cite{gkz}.
\end{example}

In Appendix \ref{app} we will modify the arguments in \cite{taozieglerhigh} to show

\begin{theorem}[Application of correspondence principle]\label{implication}  For any given choice of $k$ and $p$, Conjecture \ref{btz-conj} implies Conjecture \ref{inv-conj-strong} (and hence also Conjecture \ref{inv-conj}).
\end{theorem}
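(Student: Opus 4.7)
The plan is to argue by contradiction using a Furstenberg-type correspondence principle, building on the construction in \cite{taoziegler}. Assume Conjecture \ref{inv-conj-strong} fails for some choice of $k, p, \eta, \eps(\cdot)$. Then for every $M \in \N$ there is a dimension $n_M$ and a function $f_M \colon \F_p^{n_M} \to \mathbb{D}$ with $\|f_M\|_{U^{k+1}} \geq \eta$ such that, with probability more than $1/2$ over the random shifts $\vec h \in (\F_p^{n_M})^M$, no triple $(m, P, F)$ as in the conjecture exists. Letting $M \to \infty$ and passing to a subsequence, I would apply the Furstenberg-type correspondence of \cite{taoziegler} to produce an ergodic $\F_p^\omega$-system $(X,\mu,(T^h)_{h\in\F_p^\omega})$ together with a limit function $F \in L^\infty(X)$ of norm at most $1$ with $\|F\|_{U^{k+1}(X)} \geq \eta$, where the correspondence is engineered so that joint distributions of the $f_M$ along finite configurations of shifts converge to joint distributions of $F$ along $T^h F$. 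Crucially the correspondence construction will be arranged so that the $\sigma$-algebra of $X$ is generated (mod null sets) by the translates $\{T^h F : h \in \F_p^\omega\}$.

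With the limit system in hand, I would apply Conjecture \ref{btz-conj}. Since $\|F\|_{U^{k+1}(X)} \geq \eta$, the projection of $F$ onto the order-$k$ Host--Kra factor $\mathcal{Z}_k(X)$ has $L^2$-norm bounded below by some $\eta' > 0$. By Conjecture \ref{btz-conj} applied to this factor, $\mathcal{Z}_k(X)$ is generated by $\Poly^k(X)$, so there exists $P \in \Poly^k(X)$ with $\lvert\int_X F\,\overline{e(P)}\,d\mu\rvert \geq c$ for some $c = c(\eta)>0$. Because the $\sigma$-algebra of $X$ is generated by translates of $F$, the phase $e(P)$ lies in the closure (in $L^2$) of the algebra generated by finitely many such translates. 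Given any $\delta>0$, there therefore exist $M_0 \in \N$, shifts $h_1,\dots,h_{M_0} \in \F_p^\omega$ and a continuous (in fact polynomial, which I then truncate to Lipschitz) function $F_0 \colon \mathbb{D}^{M_0} \to \C$ such that $\lVert e(P) - F_0((T^{h_i}F)_{i})\rVert_{L^2(\mu)} \leq \delta$.

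Now I would run the correspondence in reverse: by the convergence of joint distributions, for all sufficiently large $M$ in the original failing sequence, the finitely many shifts $h_1,\dots,h_{M_0}$ lift (after Cauchy-Schwarz and averaging in auxiliary shifts) to translations by vectors of the form $a\cdot\vec h$, $a \in \F_p^M$, for a positive-measure set of $\vec h$, producing a polynomial $P_M \in \Poly^k(\F_p^{n_M})$ and a Lipschitz $F_M$ that (approximately) realize the correlation and approximation demanded by Conjecture \ref{inv-conj-strong}. Choosing $\delta$ much smaller than $\eps(m)$ for the relevant $m = \lceil 1/c\rceil$, this contradicts the failure hypothesis for $M$ large enough. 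The extra probability-$1/2$ condition is handled by Markov's inequality applied to the almost-everywhere $\vec h$-uniform statements produced by the correspondence.

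The main obstacle is the faithfulness of the correspondence: I need to ensure that the limit system $X$ has its $\sigma$-algebra generated by translates of $F$ (so that the BTZ-produced polynomial inherits measurability from $F$), and that both the correlation bound and the Lipschitz approximation can be pulled back quantitatively to the finite group with a bound $M$ depending only on $(k,p,\eta,\eps(\cdot))$. This requires upgrading the measure-theoretic correspondence of \cite{taoziegler} to track joint distributions under arbitrary finite configurations of shifts (rather than only those arising from arithmetic configurations), and converting the $L^2$ approximation of $e(P)$ in the limit into an $L^1$-style approximation on $\F_p^{n_M}$ uniform over a density-$1/2$ set of $\vec h$; I expect standard ergodic/compactness arguments, together with a mollification of the measurable functional into a Lipschitz one, to suffice, at the cost of taking the Lipschitz constant $M$ in Conjecture \ref{inv-conj-strong} large.
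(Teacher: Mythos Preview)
Your overall approach matches the paper's: assume failure, build a Furstenberg limit system generated by a single function and its shifts, apply Conjecture~\ref{btz-conj} to obtain a correlating polynomial phase $e(P)$, approximate $e(P)$ by a Lipschitz combination of finitely many translates, and transfer back to derive a contradiction.

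There is, however, one genuine gap in your ``run the correspondence in reverse'' step. You assert that the transfer produces a polynomial $P_M \in \Poly^k(\F_p^{n_M})$ on the finite side, but you never say how. The polynomial $P$ lives only in the limit system; what transfers back is the \emph{Lipschitz combination} $F_0((T^{h_i}F)_i)$, and the claim that this is close to a genuine polynomial phase on $\F_p^{n_M}$ does not follow automatically from convergence of joint distributions. The paper handles this by first transferring the identity $\Delta_{h_1}\cdots\Delta_{h_{k+1}} e(P) = 1$ to an approximate statement $\Delta_{h_1}\cdots\Delta_{h_{k+1}} F_0(\cdots) \approx 1$ on the finite group (this is where the ``accurate sampling sequence'' machinery from \cite{btz} is invoked), and then applies a polynomial stability lemma (\cite[Lemma 4.5]{taoziegler} or Theorem~\ref{poly-stable}) to upgrade this to $F_0(\cdots) \approx e(P_M)$ for a genuine $P_M \in \Poly^k(\F_p^{n_M})$. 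Without this stability step your argument does not close.

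A smaller point: your handling of the probability-$1/2$ condition via ``Markov's inequality applied to almost-everywhere $\vec h$-uniform statements'' is not how the paper proceeds. Rather, the paper shows that the accurate-sampling event holds with probability at least $1/2$, combines this with the assumed failure event (probability $>1/2$) to pick a \emph{single deterministic} choice of shifts $v_1, v_2, \dots$ satisfying both, and builds the limit system from that fixed choice. This is what makes the shifts $h_i \in \F_p^\omega$ in the limit correspond directly to the $v_i$ on the finite side, so that the Lipschitz approximation of $e(P)$ is already in the form demanded by Conjecture~\ref{inv-conj-strong}.
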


In particular, from the previously mentioned results of \cite{CGSS}, Conjecture \ref{inv-conj-strong} holds in the high characteristic case $k \leq p+1$; also, from \cite[Theorem 1.20]{btz} one can establish a weaker version of Conjecture \ref{inv-conj-strong} in which the polynomial $P$ is of degree at most $C(p,k)$ rather than $k$ for some quantity $C(p,k)$ depending only on $p,k$.  

However, the low characteristic case presents additional difficulties; for instance, a key ``exact roots'' property for polynomials in order $k$ $\F_p^\omega$-systems is known to fail in low characteristic \cite[Appendix E]{taoziegler}.  In fact we are able to construct the following counterexample, which is the main result of our paper.

\begin{theorem}[Counterexample to strong inverse conjecture]\label{counter}  
Conjecture \ref{inv-conj-strong} fails when $p=2$ and $k=5$.
\end{theorem}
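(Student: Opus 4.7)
The plan is to first construct an ergodic $\F_2^\omega$-system $X$ of order at most $5$ that is not Abramov of order $5$---thereby disproving Conjecture \ref{btz-conj} in this case---and then to transfer the resulting ergodic pathology into the combinatorial setting by a Furstenberg-type sampling argument that is essentially the converse of the correspondence principle used for Theorem \ref{implication}. The combinatorial counterexample $f_n \colon \F_2^n \to \mathbb{D}$ would be obtained by restricting a single bounded function $f$ on $X$ along a uniformly random embedding $\F_2^n \hookrightarrow \F_2^\omega$ based at a random point of $X$, so that the failure of Conjecture \ref{inv-conj-strong} for $f_n$ reflects the failure of Conjecture \ref{btz-conj} for $X$.

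For the construction of $X$, I would build a tower of extensions $Z_1 \subset Z_2 \subset Z_3 \subset Z_4 \subset Z_5 = X$ in which $Z_4$ is an explicitly chosen Abramov order-$4$ system (for instance, one built from classical polynomials on $\F_2^\omega$ of degrees $1$ through $4$), and the top extension $Z_5 \to Z_4$ is defined by a $\T$-valued cocycle $\rho$ of type $(5)$ whose cohomology class is genuinely non-classical---that is, the associated polynomial eigenfunction $\phi \colon Z_5 \to \T$ is a non-classical quintic, and no coboundary in $\Poly^5(Z_5)$ trivializes $\rho$. The existence of such a $\rho$ should be extracted from the failure of the ``exact roots'' property for polynomials in low-characteristic $\F_p^\omega$-systems noted in \cite[Appendix E]{taoziegler}, which produces genuine obstructions precisely at the threshold $(p,k) = (2,5)$ that lies just past the Abramov regime $k \le p+1$ covered in \cite{CGSS}. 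Setting $f \coloneqq e(\phi)$ then forces $\|f\|_{U^6(X)} = 1$, and the non-Abramov structure of $X$ ensures that any $P \in \Poly^5(X)$ correlating with $f$ must inherit the non-classical data of $\phi$ in an essential way.

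The concluding step is to argue that such a $P$ cannot be approximated in $L^1(X)$ by a Lipschitz function $F$ of any bounded number of shifts $T^{h_1}f, \dots, T^{h_M}f$: any such combination is measurable with respect to a $\sigma$-algebra pulled back from a proper sub-extension of $Z_5 \to Z_4$, and therefore cannot resolve the vertical $\T$-coordinate that encodes the non-classical part of $\phi$. If Conjecture \ref{inv-conj-strong} held for the sampled $f_n$ with some uniform $M$ and $\eps(\cdot)$, a standard ultralimit as $n \to \infty$ would produce exactly such an approximation on $X$, contradicting the non-measurability above. The principal obstacle is the cohomological construction and analysis of the extension $Z_5 \to Z_4$: one must exhibit a concrete type-$(5)$ cocycle whose class survives every $\Poly^5$-coboundary and prove a rigidity statement showing that its associated phase polynomial is not locally decodable from finitely many translates of $f$. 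This is the heart of the paper; the Furstenberg transfer and the compactness argument that convert the ergodic obstruction into a quantitative combinatorial failure are comparatively routine once the ergodic input is in hand.
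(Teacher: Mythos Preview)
Your plan reverses the paper's logic---it works combinatorially in $\F_2^n$ first and derives the ergodic counterexample from Theorem \ref{implication}, whereas you propose to build the ergodic system and sample down. The paper acknowledges in Remark \ref{Direct} that a direct ergodic construction should exist but would be of comparable difficulty; however, your specific outline has a genuine gap at its center.

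The fatal problem is your choice $f \coloneqq e(\phi)$ with $\phi$ a non-classical quintic polynomial. If $\phi \in \Poly^5(X)$ (as your claim $\|f\|_{U^6(X)} = 1$ requires), then $P = \phi$ itself witnesses Conjecture \ref{inv-conj-strong} trivially: $e(P) = f$ is approximable by a single shift of $f$ with zero error, regardless of how non-classical $\phi$ is. The counterexample must instead take $f = e(S)$ for a function $S$ that is \emph{not} quintic---in the paper, $S$ solves $d^6 S = Q^*\rho$ for a randomly chosen quadratic $Q \colon \F_2^n \to \F_2^2$ and a fixed explicit $5$-cocycle $\rho$ on the degree-$2$ filtered group ${\mathcal D}^2(\F_2^2)$ (not a degree-$4$ base as you suggest). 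The structure of $d^6 S$ forces $\|e(S)\|_{U^6}$ to be bounded below, while every quintic $P$ correlating with $e(S)$ differs from $S$ by a function $F(Q)$, which would make $\rho$ a coboundary. Your hope that $\rho$ ``should be extracted from'' \cite[Appendix E]{taoziegler} is too optimistic: that appendix exhibits only the failure of exact roots, whereas the paper's Section \ref{nontriv-sec} constructs $\rho$ explicitly (originally via computer search) and verifies by hand that it is strongly $2$-homogeneous yet not a coboundary.

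Your final non-measurability step is also where the paper expends the most effort, and it is far from routine. The argument requires an equidistribution theorem for randomly sampled cubes (Theorem \ref{equid}), a delicate splitting lemma for a nilspace extension (Lemma \ref{exist}, Proposition \ref{solving}), and---crucially---that $S$ take values in $\frac{1}{2^5}\Z/\Z$ rather than $\frac{1}{2}\Z/\Z$. The paper shows explicitly that the $\frac{1}{2}\Z/\Z$-valued construction \emph{fails} to violate Conjecture \ref{inv-conj-strong}: classical polynomials can be multiplied pointwise, and this allows one to reconstruct the hidden quintic measurably from boundedly many shifts of $S$. The need for the larger non-classical range to block this multiplication trick is exactly the subtlety your ``proper sub-extension'' handwave does not capture.
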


Combining Theorem \ref{counter} with the contrapositive of Theorem \ref{implication}, we conclude that Conjecture \ref{btz-conj} also fails when $p=2$ and $k=5$; see also Remark \ref{Direct} for how one might give a more direct construction of a counterexample to that conjecture.  Our construction was located numerically, but we give a human-verifiable proof of the theorem here, taking advantage in particular of several technical simplifications available in the $p=2$ case (in particular, we take advantage of the ability to identify the $n$-dimensional cube $\{0,1\}^n$ with the $n$-dimensional vector space $\F_2^n$, for instance in \eqref{nbox}).  It would be interesting to determine the complete range of $p,k$ for which Conjecture \ref{inv-conj-strong} and Conjecture \ref{btz-conj} holds; for instance, the case $p=2,k=4$ remains unresolved for both conjectures, and we have not been able to rigorously establish that these conjectures are monotone\footnote{There is precedent for failure of monotonicity in this subject: in \cite{milicevic-counter}, \cite[Corollary 10]{milicevic-u56} it was shown that a conjecture of Tidor \cite{tidor} on approximately symmetric multilinear forms fails for $4$-linear forms but is true again for $5$-linear forms.  We thank Luka Mili\'cevi\'c for this example.} in $k$.

Informally, Theorem \ref{counter} asserts that in characteristic two, there exist ``pseudo-quintic'' functions $f$ which have large $U^6(\F_2^n)$ norm, and in fact correlate with a genuine quintic $e(P)$, but that the quintics that $f$ correlates with will be ``non-measurable'' in the sense that they cannot be approximated\footnote{To justify this intuition, observe from the Stone--Weierstrass theorem that if ${\mathcal F}$ is the $\sigma$-algebra generated by a function $f \in L^\infty(X)$ and its $G$-translates, then a function $g \in L^\infty(X)$ will be ${\mathcal F}$-measurable iff it can approximated in $L^1$ to arbitrary accuracy by a polynomial combination of a finite number of translates of $f$ and $\overline{f}$.  To ``finitize'' this assertion by replacing the measure-preserving system $X$ with a finite space $\F_2^n$, we informally replace ``finite number of translates'' with ``bounded number of translates'', to prevent the notion of ``measurability'' from becoming trivial in the finitary setting.} in $L^1$ by a polynomial combination of boundedly many translates of $f$ and its complex conjugate.  Instead, one has to use ``non-measurable'' operations, such as taking exact roots of polynomials as in \cite{taoziegler}, in order to locate such quintics $e(P)$.

\begin{remark}  Recently, quantitative versions of Conjecture \ref{inv-conj} for $p=2$ and $k=3,4,5$ have been established in \cite{tidor, milicevic-u56}; in particular the paper \cite{milicevic-u56} covers the case $p=2,k=5$ of Theorem \ref{counter}.  This is however not a contradiction; a crucial step \cite[Proposition 3.5]{tidor} in both those papers (a special case of Theorem \ref{cocyc-trivial} below) is the ability to represent a ``strongly symmetric $k$-linear form'' as the $k$-fold derivative of a degree $k$ polynomial, and this step is ``non-measurable'' as it requires one to expand the form into monomials using a choice of basis for $\F_2^n$.
\end{remark}

\subsection{Overview of proof}

We now give an informal, high-level description of our proof of Theorem \ref{counter}, deferring more precise details to later sections.  Roughly speaking, it would suffice to exhibit, for any sufficiently large $n$, a function $S \colon \F_2^n \to \frac{1}{2}\Z/\Z$ which was ``pseudo-quintic'' in the sense that the Gowers norm $\|e(S)\|_{U^6(\F_2^n)}$ was large, but such that $e(S)$ did not correlate in any significant fashion with $e(P)$ for any genuine quintic polynomial $P \colon \F_2^n \to \T$ which was somehow ``measurable'' with respect to $S$ and related functions.

One way to ensure that the Gowers norm $\|e(S)\|_{U^6(\F_2^n)}$ is large is to enforce some structure on the sixth derivative $d^6 S \colon \F_2^n \times (\F_2^n)^6 \to \frac{1}{2}\Z/\Z$ of $S$, defined by the formula
$$ (d^6 S)_{h_1,\dots,h_6}(x) \coloneqq \partial_{h_1} \dots \partial_{h_6} S(x)$$
where $\partial_h S(x) \coloneqq S(x+h)-S(x)$.  Indeed, a routine application of the Gowers--Cauchy--Schwarz inequality and Fourier decomposition reveals that if $(d^6 S)_{h_1,\dots,h_6}(x)$ can be expressed in terms of a bounded number of quintic or lower degree polynomials applied to the various vertices $x + \omega \cdot \vec h$ of the $6$-dimensional cube $(x + \omega \cdot \vec h)_{\omega \in \{0,1\}^6}$, then $e(S)$ will have large $U^6(\F_2^n)$ norm (see Lemma \ref{gowers} for a rigorous version of this implication).  As it turns out, we will be able to construct a counterexample in which $d^6 S$ is a function of a (randomly chosen) \emph{quadratic} polynomial $Q \colon \F_2^n \to \F_2^2$ taking values in the Klein four-group $X_2 \coloneqq \F_2^2$.  That is to say, $S$ will be chosen to obey the equation
\begin{equation}\label{S-eq}
(d^6 S)_{h_1,\dots,h_6}(x) = \rho\left( \left(Q(x+\omega \cdot \vec h)\right)_{\omega \in \{0,1\}^6} \right)
\end{equation}
for some function $\rho \colon C^6(X_2) \to \frac{1}{2}\Z/\Z$ whose domain $C^6(X_2) \subset X_2^{\{0,1\}^6}$ is a space of ``$6$-cubes'' in $X_2$ that contains all possible values of the tuple
$\left(Q(x+\omega \cdot \vec h)\right)_{\omega \in \{0,1\}^6}$.  In fact, $C^6(X_2)$ can be described explicitly as the set of all tuples of the form
$$ \left( x + \sum_{i=1}^6 h_i \omega_i + \sum_{1 \leq i < j \leq 6} h_{ij} \omega_i \omega_j\right)_{\omega \in \{0,1\}^6}$$
for $x, h_i, h_{ij} \in X_2$.  (In the language of nilspaces that we will use later, we are equipping $X_2$ with the nilspace structure associated to the degree two filtration ${\mathcal D}^2(\F_2^2)$ on the Klein four-group.)

The function $\rho$ has to obey a certain number of properties in order to be able to find a solution $S$ to the equation \eqref{S-eq}.  Firstly, $\rho$ must be symmetric with respect to permutations of $\{1,\dots,6\}$ and must also obey a certain ``cocycle equation'' arising from the identity $\partial_{h+k} S = \partial_h S + T^h \partial_k S$, where $T^h S(x) \coloneqq S(x+h)$ is the shift map.  These properties can be formalized in the language of nilspaces by requiring $\rho$ to be a \emph{$5$-cocycle} on $X_2$ taking values in $\frac{1}{2}\Z/\Z$; see Definition \ref{nil-cocycle-def} for details.  However, the property of being a $5$-cocycle is not yet sufficient to guarantee a solution to \eqref{S-eq}. 
In order to locate a solution, we will require the cocycle $\rho$ to obey an additional property that we call ``strong $2$-homogeneity''.  This property asserts that $\rho$ takes the form
$$ \rho( (x_\omega)_{\omega \in \{0,1\}^6} ) = \sum_{\omega \in \{0,1\}^5} (-1)^{5-|\omega|} \psi( x_{\omega 0}, x_{\omega 1} )$$
for all $(x_\omega)_{\omega \in \{0,1\}^6}$ in $C^6(X_2)$ and some function $\psi \colon C^1(X_2) \to \T$ on the space of pairs $C^1(X_2) = X_2 \times X_2$ on $X_2$, such that $2\psi$ is a ``cubic'' polynomial on $C^1(X_2)$ with respect to a certain natural nilspace structure on $C^1(X_2)$; see Definition \ref{strong-cocycle} for a precise statement.  This turns out to be sufficient to guarantee the existence of the pseudo-quintic function $S \colon \F_2^n \to \frac{1}{2}\Z/\Z$; see Theorem \ref{cocyc-trivial-strong} and Lemma \ref{lift} for precise statements.  

We would still like to ensure that $S$ does not correlate with a quintic phase $e(P)$ where $P$ can be well approximated in terms of $S$ and its translates. An obstruction to this claim would occur if the cocycle $\rho$ was a ``$5$-coboundary'' in the sense that $\rho$ takes the form
$$ \rho( (x_\omega)_{\omega \in \{0,1\}^6} ) = \sum_{\omega \in \{0,1\}^6} (-1)^{6-|\omega|} F(x_\omega)$$
for all $(x_\omega)_{\omega \in \{0,1\}^6}$ in $C^6(X_2)$ and some function $F \colon X_2 \to \T$.  Indeed, if this were the case, then one could rearrange \eqref{S-eq} as 
$$ d^6 (S - F(Q)) = 0$$
and thus we have $e(S) = e(P) e(F(Q))$ for some quintic polynomial $P \in \Poly^5(\F_2^n)$.  Morally speaking, this relation indicates that $e(P)$ correlates with $e(S)$, and that $P$ should be well approximated by $S$ and its translates. Indeed,  note that from the identity $e(S)=e(P)\,e(F(Q))$ it follows that the correlation of $e(S)$ with $e(P)$ is simply the expectation of $e(F(Q(x)))$. Here 
$Q \colon \F_2^n \to X_2$ is a quadratic map with values in the finite group $X_2 = \F_2^2$, and 
$F \colon X_2 \to \T$. Since $Q$ is essentially equidistributed on $X_2$, the quantity 
$\E_x\, e(F(Q(x)))$ is very close to $\E_{y \in X_2} e(F(y))$. For a generic choice of $F$ this latter 
average is non-zero, so one expects this correlation to be nontrivial.\footnote{Note that this does not contradict 
the usual Gauss-sum estimates for scalar-valued quadratic phases $e(R(x))$, since $F \circ Q$ is in 
general not a scalar quadratic polynomial but merely a function factoring through the finite quotient $X_2$.}
Moreover, from the relation $e(P)= e(S)\,\overline{e(F(Q(x)))}$ together with \eqref{S-eq}, one sees that one can approximate $P$ by combining information from $S$ and its shifts.

The key step in our argument is thus to locate a $5$-cocycle $\rho \colon C^6(X_2) \to \frac{1}{2}\Z/\Z$ which is strongly $2$-homogeneous, but not a $5$-coboundary.  This is accomplished in Section \ref{nontriv-sec}.  We remark that this claim involves a finite system of linear equations on a finite-dimensional vector space over $\F_2$, and can be verified numerically by standard linear algebra packages (and in particular through calculations of certain Smith normal forms of matrices); indeed, we used such computer-assisted calculations to lead us to this particular claim.  However, we were subsequently able to describe the cocycle $\rho$ and verify its properties in a completely human-verifiable fashion; see Section \ref{nontriv-sec} for details.

\begin{remark} With our specific choice of $\rho$, we can describe the solutions to \eqref{S-eq} more explicitly as
$$ S = \frac{\binom{R}{2} Q_2}{2} + P$$
where $Q = (Q_1,Q_2)$, $R \colon \F_2^n \to \Z/4\Z$ is a cubic polynomial which is a ``square root'' of $Q_1$ in the sense that $2 \frac{R}{4} = \frac{Q_1}{2} \mod 1$ (or equivalently $R = Q_1 \mod 2$), and $P \colon \F_2^n \to \T$ is an arbitrary quintic polynomial (we can require $P$ to take values in $\frac{1}{2}\Z/\Z$ if we wish $S$ to also take values in this group).  See Lemma \ref{expl}. Heuristically, the presence of the square root in this construction prevents the quintic $P$ (which correlates with $S$) from being ``measurable'' with respect to $S$ and its shifts, although actually demonstrating this rigorously requires a surprisingly large amount of effort.
\end{remark}

In order to convert this explicit cocycle $\rho$ into an actual counterexample to Conjecture \ref{inv-conj-strong} we will rely heavily on the theory of \emph{nilspaces}, as developed for instance in \cite{candela0}, although we will mostly only need to work with \emph{finite} nilspaces, as opposed to compact or measurable nilspaces.  A central role is played in particular by a certain explicit $5$-step finite nilspace  $X_{5,5}$.
As a set, $X_{5,5}$ is given as
$$ X_{5,5} = X_2 \times \frac{1}{2^5}\Z/\Z = \F_2^2 \times \frac{1}{2^5} \Z/\Z$$
but the cube structure on $X_{5,5}$ is somewhat non-trivial, involving the cocycle $\rho \colon C^6(X_2) \to \frac{1}{2}\Z/\Z$ mentioned previously.  Roughly speaking, the nilspace $X_{5,5}$ is the abstraction of a pair $(Q,S)$ of functions, in which $Q$ is itself a pair $Q = (Q_1,Q_2)$ of classical quadratic polynomials (taking values in $\F_2$), and $S$ is a ``pseudo-quintic'' taking values in $\frac{1}{2^5}\Z/\Z$ that obeys the identity \eqref{S-eq}.  It will turn out not to be possible to correlate $S$ with any genuine quintics that only arise from $Q, S$, and a bounded (and randomly selected) number of their translates. The actual verification that these translates do not actually provide any useful information for the purpose of constructing a quintic turns out to be rather tricky, requiring one to show that a certain nilspace extension ``splits'': see Lemma \ref{exist}.  A simpler version $X_{5,1}$ of the nilspace $X_{5,5}$, in which the cyclic group $\frac{1}{2^5}\Z/\Z$ is replaced by $\frac{1}{2}\Z/\Z$, can also be used to quickly answer a question of Candela, Gonz\'alez-S\'anchez, and Szegedy \cite[Question 5.18]{CGSS} in the negative, thus giving a weaker version of Theorem \ref{counter}; see Proposition \ref{injcont}.

\begin{remark}
We identify the core of the proof as solving a cohomological problem: finding finite abelian $2$-groups equipped with a cube structure that support $2$-homogeneous $k$-cocycles which are not $k$-coboundaries. The cubes constructed from nilspaces built from such cocycles encode functions with large Gowers norms that violate the strong inverse Gowers conjecture \ref{inv-conj-strong}.

Our argument proceeds by contradiction, demonstrating that if such functions were to satisfy Conjecture \ref{inv-conj-strong}, then the associated cubes would exhibit an asymptotic equidistribution property. This equidistribution, however, would imply the vanishing of the cohomology associated with the underlying cocycle. Thus, our proof establishes a link between vanishing cohomology and equidistribution.

We propose that a deeper exploration of this connection between cohomology and equidistribution is crucial for a more conceptual understanding of the failure and for identifying the full range of the failure of Conjecture \ref{btz-conj}.

We note that what we refer to as equidistribution is termed the "balanced property" in nilspace literature, where its significance in inverse Gowers theory has been highlighted (see, e.g., \cite{CGSS}).
\end{remark}

\subsection{Acknowledgments}
 A.J. was funded by the Deutsche Forschungsgemeinschaft (DFG, German Research Foundation) Heisenberg Grant -- 547294463.
OS was supported by NSF grant DMS-1926686 and ISF grant 3056/21.  Over the course of this research, TT was supported by a Simons Investigator grant, the James and Carol Collins Chair, the Mathematical Analysis \& Application Research Fund, and by NSF grants DMS-1764034 and DMS-2347850, and is particularly grateful to recent donors to the Research Fund. We thank Tim Austin, Pablo Candela, and  Luka Mili\'cevi\'c  for corrections and comments.  The authors are also particularly grateful to the anonymous referee for a very careful reading of the manuscript and for many helpful suggestions that improved it. 

\subsection{Notation}\label{notation-sec}

We identify the field $\F_2$ with the cyclic group $\Z/2\Z$.  If $a$ is an element of a cyclic group $\Z/q\Z$, we use $\frac{a}{q}$ to denote the corresponding element of the finite subgroup $\frac{1}{q}\Z/\Z$ of the unit circle $\T = \R/\Z$, thus
$$ \frac{a + q\Z}{q} = \frac{a}{q} \mod 1.$$
We observe that the binomial coefficient $n \mapsto \binom{n}{2}$ is well-defined as a map from $\Z/4\Z$ to $\F_2$; indeed, we have $\binom{n}{2} = 0 \mod 2$ when $n=0,1 \mod 4$ and $\binom{n}{2} = 1 \mod 2$ when $n = 2,3 \mod 4$.

\section{A characterization of coboundaries on $\F_2^n$}

Let $G = (G,+)$ be a discrete abelian group.  As discussed in Appendix \ref{nilspace-app}, $G$ can be given the structure ${\mathcal D}^1(G)$ of a degree one filtered abelian group, and hence a nilspace.  Given a function $F \colon G \to \T$ from $G$ to the torus $\T$, this gives a derivative map $d^{k+1} F \colon G^{[k+1]} \to \T$ for every $k \geq 0$.  We can describe this map more explicitly by using the identification $G \times G^{k+1} \equiv G^{[k+1]}$ given by the formula
\begin{equation}\label{hik}
(x, \vec h)  \equiv \left(x + \omega \cdot \vec h\right)_{\omega \in \{0,1\}^{k+1}}
\end{equation}
for $x \in G$ and $\vec h = (h_1,\dots,h_{k+1}) \in G^{k+1}$, and then writing
\begin{align*}
(d^{k+1} F)_{h_1,\dots,h_{k+1}}(x) &\coloneqq d^{k+1} F\left(\left (x + \omega \cdot \vec h\right)_{\omega \in \{0,1\}^{k+1}} \right) \\
&= \sum_{\omega \in \{0,1\}^{k+1}} (-1)^{k+1-|\omega|} F\left(x + \omega \cdot \vec h\right) \\
&= \partial_{h_1} \dots \partial_{h_{k+1}} F(x).
\end{align*}
Thus for instance we have
$$ \Poly^k(G) = \{ F \colon G \to \T: d^{k+1} F = 0 \}$$
for any $k \geq 0$.

In a similar spirit, a $k$-cocycle $\rho \colon G^{[k+1]} \to \T$ as defined in Definition \ref{nil-cocycle-def} can now be parameterized by $\rho_{h_1,\dots,h_{k+1}} \colon G \to \T$ for each $h_1,\dots,h_{k+1} \in G$ obeying the following two axioms:
\begin{itemize}
    \item (Symmetry) $\rho_{h_1,\dots,h_{k+1}}$ is symmetric under permutation of the parameters $h_1,\dots,h_{k+1}$.
    \item (Cocycle)  One has the identity
    \begin{equation}\label{cocycle-h1}
    \rho_{h_1+h'_1,h_2,\dots,h_{k+1}} = \rho_{h_1,h_2,\dots,h_{k+1}} + T^{h_1} \rho_{h'_1,h_2,\dots,h_{k+1}}
    \end{equation}
    for all $h_1,h'_1,h_2,\dots,h_{k+1} \in G$, where (as in Appendix \ref{nilspace-app}) $T^h$ denotes the translation operator
    $$ T^h F(x) \coloneqq F(x+h).$$
\end{itemize}

We describe the cocycle property \eqref{cocycle-h1} in terms of the first shift $h_1$ only, but of course by the symmetry property, we have cocycle behavior with respect to all the other shifts as well. 
In the language of Definition \ref{nil-cocycle-def}, $d^{k+1} F$ is a $k$-coboundary, and thus also a $k$-cocycle.  

When $G$ is an elementary abelian $2$-group, there is a further constraint on $k$-coboundaries $d^{k+1} F$, coming from the identity
\begin{equation}\label{2h}
0 = \partial_{2h} = 2\partial_h + \partial_h^2
\end{equation}
for any $h \in G$, which implies that
\begin{equation}\label{hhh}
\partial_{h_1}^2 \partial_{h_2} = \partial_{h_2}^2 \partial_{h_1}
\end{equation}
for all $h_1, h_2 \in G$.  This leads to the additional ``$2$-homogeneity'' constraint
\begin{equation}\label{dkf}
 d^{k+1} F_{h_1,h_1,h_2,h_3,\dots,h_k} = d^{k+1} F_{h_2,h_2,h_1,h_3,\dots,h_k}
 \end{equation}
 whenever $k \geq 2$ and $h_1,\dots,h_k \in G$ (our choice of terminology here is inspired by \cite{CGSS}) .  This motivates the following definition:
 
 \begin{definition}[$2$-homogeneous cocycles on elementary abelian $2$-groups]  Let $G$ be an elementary abelian $2$-group, and let $\rho \colon G^{[k+1]} \to \T$ be a $k$-cocycle for some $k \geq 0$.  If $k \geq 2$, we say that $\rho$ is \emph{$2$-homogeneous} if we have
 \begin{equation}\label{dkf-again}
 \rho_{h_1,h_1,h_2,h_3,\dots,h_k} = \rho_{h_2,h_2,h_1,h_3,\dots,h_k}
 \end{equation}
 whenever $h_1,\dots,h_k \in G$.  For $k < 2$, we declare all $k$-cocycles to automatically be $2$-homogeneous.
 \end{definition}

\begin{remark}\label{ctr}  Not all cocycles on elementary abelian $2$-groups obey the $2$-homogeneity condition \eqref{dkf-again}.  For instance, if $G = \F_2^2$ is generated by $e_1=(1,0),e_2=(0,1)$, then by letting $\rho \colon G^{[3]} \to \T$ be the symmetric trilinear form
$$ \rho_{h_1,h_2,h_3}(x) \coloneqq \frac{h_1^{(2)} h_2^{(1)} h_3^{(1)} + h_1^{(1)} h_2^{(2)} h_3^{(1)} + h_1^{(1)} h_2^{(1)} h_3^{(2)}}{2} \mod 1,$$
where $h_i = (h_i^{(1)}, h_i^{(2)}) \in G$, one can verify that $\rho$ is a $2$-cocycle on the elementary abelian $2$-group $G$ that does not obey \eqref{dkf-again}.  This $2$-cocycle will be related to a non-trivial (but now $2$-homogeneous) $5$-cocycle on the degree $2$ filtered abelian group ${\mathcal D}^2(\F_2^2)$ that we will construct in the next section.
\end{remark}

We have just established that every $k$-coboundary on an elementary abelian $2$-group is $2$-homogeneous.
We now provide a converse to this above observation when $G = \F_2^n$.  

\begin{theorem}[All $2$-homogeneous $\T$-valued cocycles are coboundaries for elementary abelian $2$-groups]\label{cocyc-trivial} Let $G = \F_2^n$ be an elementary abelian $2$-group, and let $k \geq 0$.  Then every $2$-homogeneous $k$-cocycle $\rho \colon G^{[k+1]} \to \T$ is a $k$-coboundary.
\end{theorem}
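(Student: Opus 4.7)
The plan is to proceed by induction on $n$, reducing the problem on $G = \F_2^n$ to two smaller problems on the subgroup $G' = \F_2^{n-1}$ generated by $e_1, \ldots, e_{n-1}$; the base case $n = 0$ is trivial. For the inductive step, I would first restrict $\rho$ to tuples supported in $G'$, obtaining a $2$-homogeneous $k$-cocycle on $G'$ that by the induction hypothesis equals $d^{k+1} F'$ for some $F' \colon G' \to \T$. Extending $F'$ to $\tilde F' \colon G \to \T$ by $\tilde F'(x' + \alpha e_n) := F'(x')$ and subtracting, I replace $\rho$ by $\sigma := \rho - d^{k+1} \tilde F'$, which is still a $2$-homogeneous $k$-cocycle on $G$ but now \emph{vanishes} on $(G')^{[k+1]}$.

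The heart of the argument is to extract from $\sigma$ an ``$e_n$-slice'' cocycle on $G'$. Define $\tau \colon (G')^{[k]} \to \T$ by $\tau_{h_2, \ldots, h_{k+1}}(x') := \sigma_{e_n, h_2, \ldots, h_{k+1}}(x')$ for $h_i, x' \in G'$. A short calculation combining the symmetry of $\sigma$ with the $2$-homogeneity identity $\sigma_{h_1, h_1, h_2, h_3, \ldots} = \sigma_{h_2, h_2, h_1, h_3, \ldots}$ verifies that $\tau$ is itself a $2$-homogeneous $(k-1)$-cocycle on $G'$, so by the induction hypothesis $\tau = d^k H$ for some $H \colon G' \to \T$. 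Using the identity $\partial_{h'} \sigma_{e_n, \ldots, e_n}(x') = -2\,\sigma_{h', e_n, \ldots, e_n}(x')$ (derived by comparing the two ways of applying the cocycle identity to the first slot of $\sigma_{e_n + h', e_n, \ldots, e_n}$) together with $\sigma_{h', e_n, \ldots, e_n}(x') = \tau_{h', \ldots, h'}(x')$ (obtained by iterated $2$-homogeneity to trade $e_n$'s for copies of $h'$), one can further adjust $H$ by an additive constant so that the pure-$e_n$ relation $\sigma_{e_n, \ldots, e_n}(x') = (-2)^k H(x')$ holds on all of $G'$.

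I would then set $F''(x' + \alpha e_n) := \alpha H(x')$ (viewing $\alpha \in \{0, 1\} \subset \Z$) and take $F := \tilde F' + F''$. The desired conclusion $d^{k+1} F = \rho$ then reduces to the claim $d^{k+1} F'' = \sigma$. Both sides are $2$-homogeneous $k$-cocycles on $G$; both vanish on $(G')^{[k+1]}$; and direct computation using $\partial_{e_n}^j F''(x') = (-2)^{j-1} H(x')$ (which follows from $\partial_{e_n}^2 = -2\partial_{e_n}$ on functions on $\F_2^n$) shows that both also agree on the $e_n$-slice (equal to $\tau$) and on the pure-$e_n$ tuple (equal to $(-2)^k H$).

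The main obstacle is then a \emph{uniqueness lemma}: a $2$-homogeneous $k$-cocycle on $G$ is determined by its restriction to $(G')^{[k+1]}$, its $e_n$-slice, and its value on the pure-$e_n$ tuple at $0$. This is precisely where the $2$-homogeneity hypothesis is essential: given any tuple $(h_1, \ldots, h_{k+1})$ with $m \geq 2$ entries equal to $e_n$ and at least one non-$e_n$ entry, the identity $\sigma_{e_n, e_n, h_3, \ldots} = \sigma_{h_3, h_3, e_n, \ldots}$ trades two $e_n$'s for two copies of a non-$e_n$ element, strictly reducing the number of $e_n$ entries. Iterating this with symmetry and with the cocycle identity applied to each slot reduces every tuple to one of the three canonical families, while the shift relation $T^{e_n}\sigma_{e_n, \cdot} = -\sigma_{e_n, \cdot}$ (coming from $2 e_n = 0$) handles $x$-values in $G \setminus G'$. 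Once this uniqueness is established, $d^{k+1} F'' = \sigma$ follows, giving $d^{k+1} F = \rho$ and completing the induction.
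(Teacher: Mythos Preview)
Your approach is correct and takes a genuinely different route from the paper. The paper runs an \emph{outer} induction on $k$: for each $h_1$, the slice $\rho_{h_1}$ is a $2$-homogeneous $(k{-}1)$-cocycle, hence a coboundary $d^k F_{h_1}$ by induction; the resulting family $(F_{h_1})_{h_1}$ then satisfies quasi-cocycle and quasi-curl-free conditions modulo lower-degree polynomials, and an \emph{inner} induction on $n$ (using a lemma that inverts $1+T^e$ on $\Poly^{k-2}$) integrates this family to a single potential $\phi$ with $F_h-\partial_h\phi\in\Poly^{k-1}$. Your argument instead inducts directly on $n$ for all $k$ simultaneously, peeling off one coordinate at a time: solve on $G'$, solve a second lower-degree problem on $G'$ for the $e_n$-slice $\tau$, then glue via the explicit $F''(x'+\alpha e_n)=\alpha H(x')$ and a uniqueness principle. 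The two schemes are in some sense dual (slicing first in the shift variable versus first in the spatial variable); yours produces the potential more directly and avoids the polynomial-inversion lemma, while the paper's approach makes the role of the degree-lowering property $2\cdot\Poly^{k}\subset\Poly^{k-1}$ more visible.

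One small gap in your uniqueness-lemma sketch: the relation $T^{e_n}\sigma_{e_n,\cdot}=-\sigma_{e_n,\cdot}$ only covers base points in $G\setminus G'$ when at least one shift equals $e_n$. For the residual term with \emph{all} shifts $h_1',\dots,h_{k+1}'\in G'$ evaluated at $x'+e_n$, one more identity is needed. Comparing the two cocycle expansions of $\sigma_{h_1'+e_n,h_2',\dots,h_{k+1}'}(x')$ yields
\[
\sigma_{h_1',\dots,h_{k+1}'}(x'+e_n)
= \sigma_{h_1',\dots,h_{k+1}'}(x') + \partial_{h_1'}\tau_{h_2',\dots,h_{k+1}'}(x'),
\]
which reduces this case to your data (a) and (b); and since $F''$ restricted to $G'+e_n$ equals $H$, one checks directly that $d^{k+1}F''$ obeys the same identity. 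With this addition your uniqueness lemma, and hence the whole argument, goes through.
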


Informally, this theorem asserts that the equation $d^k F = \rho$ can be solved for some $F \colon G \to \T$ if and only if $\rho$ is a $2$-homogeneous $k$-cocycle.  This fact will be useful to us when the time comes to solve the equation \eqref{S-eq}, as discussed in the introduction.

\begin{remark}
A notable special case of this theorem occurs when $\rho_{h_1,\dots,h_k}$ is constant for each $h_1,\dots,h_k$, then the $2$-homogeneous $k$-cocycle $\rho$ is what is referred to as a \emph{non-classical symmetric multilinear form} in \cite{tidor} and a \emph{strongly symmetric multilinear form} in \cite{milicevic-u56}, and the potential $F$ produced by this theorem is then a (non-classical) polynomial of degree $k$.  This special case of Theorem \ref{cocyc-trivial} was previously established in \cite[Proposition 3.5]{tidor}.
\end{remark}

\begin{proof}
We first consider the base case $k=0$.  From the cocycle identity we have
$$ \rho_{x+h}(0) = \rho_x(0) + \rho_h(x)$$
for all $x,h \in G$.  Hence we have $\rho = dF$ where $F(x) \coloneqq \rho_x(0)$.

Now suppose inductively that $k>0$ and the claim has already been proven for $k-1$.  
For each $h_1 \in G$, the function $\rho_{h_1} \colon G^{[k]} \to \T$ defined by $(\rho_{h_1})_{h_2,\dots,h_{k+1}}(x) \coloneqq \rho_{h_1,\dots,h_{k+1}}(x)$ can be easily verified to be a $2$-homogeneous $(k-1)$-cocycle.  Hence by the induction hypothesis, there exists $F_{h_1} \colon G \to \T$ such that 
\begin{equation}\label{rhof}
\rho_{h_1} = d^{k} F_{h_1}. 
\end{equation}
Since $\rho_{h_1}$ is a cocycle in $h_1$, we have
$$ d^{k} F_{h_1+h'_1} = d^{k} F_{h_1} + T^{h_1} d^{k} F_{h'_1} $$
for all $h_1,h'_1 \in G$.  In other words, we have the quasi-cocycle condition
\begin{equation}\label{fhh}
F_{h_1+h'_1} - F_{h_1} - T^{h_1} F_{h'_1} \in \Poly^{k-1}(G).
\end{equation}
Also, from the symmetry between $h_1$ and $h_2$ of $(\rho_{h_1})_{h_2,\dots,h_{k+1}}$, we have that
$$ \partial_{h_3,\dots,h_{k+1}} (\partial_{h_2} F_{h_1} - \partial_{h_1} F_{h_2}) = 0$$
for all $h_1,\dots,h_{k+1} \in G$, or in other words we have the quasi-curlfree condition
\begin{equation}\label{curl-free}
\partial_{h_2} F_{h_1} - \partial_{h_1} F_{h_2} \in \Poly^{k-2}(G)
\end{equation}
for all $h_1,h_2 \in G$.  Finally, when $k \geq 2$, we have from \eqref{dkf} that
$$ \partial_{h_3} \dots \partial_{h_{k}} ( \partial_{h_1}^2 F_{h_2} - \partial_{h_2}^2 F_{h_1} ) = 0$$
for all $h_1,\dots,h_{k} \in G$, or equivalently
$$ \partial_{h_1}^2 F_{h_2} - \partial_{h_2}^2 F_{h_1}  \in \Poly^{k-3}(G)$$
and hence (by \eqref{2h})
\begin{equation}\label{curl-free-2}
2(\partial_{h_2} F_{h_1} - \partial_{h_1} F_{h_2}) \in \Poly^{k-3}(G).
\end{equation}
This constraint is implied by \eqref{curl-free} when $k>2$ thanks to \eqref{double}, but is not redundant for $k=2$.

We will show that the properties \eqref{fhh}, \eqref{curl-free}, \eqref{curl-free-2} imply that there exists $\phi \colon G \to \T$ such that
\begin{equation}\label{fhp}
 F_h - \partial_h \phi \in \Poly^{k-1}(G)
 \end{equation}
 for all $h \in G$.  If \eqref{fhp} holds, then by applying $d^k$ and using \eqref{rhof} we conclude that $\rho - d^{k+1} \phi = 0$, giving the claim.

It remains to establish \eqref{fhp}.  We prove this by a further induction on the dimension $n$.  The case $n=0$ is trivial, so suppose $n \geq 1$ and that the claim has already been proven for $n-1$.  Now split $G = \F_2^{n-1} \times \F_2$ and let $e = (0,1)$ be the generator for the $\F_2$ factor.  The operator $\partial_e$ is annihilated by $1+T^e$ since $(1+T^e) \partial_e = \partial_{2e} = 0$.  Also, for $k>2$, the operator $1+T^e = 2+\partial_e$ maps $\Poly^{k-2}(G)$ to $\Poly^{k-3}(G)$ thanks to \eqref{double}, hence
from \eqref{curl-free} and the previous sentence, we have 
$$ \partial_h (1+T^e) F_e = (1+T^e) \partial_h F_e \in (1+T^e) (\partial_e F_h + \Poly^{k-2}(G))\subset \Poly^{k-3}(G)$$
for all $h \in G$, hence
\begin{equation}\label{1f}
 (1+T^e) F_e \in \Poly^{k-2}(G).
 \end{equation}
The same argument works when $k=2$, where we use \eqref{curl-free-2} instead of \eqref{curl-free} to handle the $2$ component of $1+T^e = 2 + \partial_e$ applied to $\partial_h F_e - \partial_e F_h$.  The conclusion \eqref{1f} also holds when $k=1$, since in this case the expression \eqref{curl-free} vanishes.

Applying Lemma \ref{inv-lem}, we may find $F'_e \in \Poly^{k-1}(G)$ such that
$$ (1+T^e) F_e = (1+T^e) F'_e$$
Since $F_e - F'_e$ is annihilated by $1+T^e$, it sums to zero on each of the $2$-element cosets of $\langle e \rangle = \{0,e\}$, and we may therefore write
$$ F_e - F'_e = \partial_e \phi$$
for some $\phi \colon G \to \T$.   For instance, if we arbitrarily select a complementary (index two) subspace $H$ to $\langle e \rangle$ in $G$, we can set $\phi(x)\coloneqq 0$ and $\phi(x+e) \coloneqq F_e(x)-F'_e(x)$ for $x \in H$; other choices for $\phi$ are also possible.

If we then write
$$ F''_h \coloneqq F_h - F'_e - \partial_h \phi$$
we see that $F''_h$ obeys the same axioms \eqref{fhh}, \eqref{curl-free}, \eqref{curl-free-2} as $F_h$, but with the additional property that $F''_e = 0$.  In particular from \eqref{curl-free} we have
$$ \partial_e F''_{(h,0)} \in \Poly^{k-2}(G) $$
for all $h \in \F_2^{n-1}$.  Since $\partial_e F''_{(h,0)}(x,1) = - \partial_e F''_{(h,0)}(x,0)$, we thus have
$$ \partial_e F''_{(h,0)}(x,x_n) = (-1)^{x_n} G_h(x)$$
for all $x \in \F_2^{n-1}$ and some $G_h \in \Poly^{k-2}(\F_2^{n-1})$.  If we set $H_h \colon G \to \T$ be the function
$$ H_h(x,x_n) \coloneqq 1_{x_n=1} G_h(x),$$
then
\begin{equation}\label{phpf}
\partial_e H_h = \partial_e F''_{(h,0)}
\end{equation}
is a polynomial of degree $k-2$ on $G$, while
$$ \partial_{h_1} \dots \partial_{h_{k-1}} H_h = 0$$
whenever $h_1,\dots,h_{k-1} \in \F_2^{n-1}$.  From this (and Lemma \ref{cubechar}) we conclude that $H_h \in \Poly^{k-1}(G)$.  By \eqref{phpf}, $F''_{(h,0)} - H_h$ is $e$-invariant and can be thus viewed as a function on $\F_2^{n-1}$.  One then verifies that the functions $F''_{(h,0)} - H_h$ obey the same axioms \eqref{fhh}, \eqref{curl-free}, \eqref{curl-free-2} as $F_h$, but on $\F_2^{n-1}$ rather than $\F_2^n$.  Applying the inner induction hypothesis and lifting back to $G$, we can find an $e$-invariant $\phi'' \colon G \to \T$ such that
$$ F''_{(h,0)} - H_h - \partial_{(h,0)} \phi'' \in \Poly^{k-1}(G)$$
for all $h \in \F_2^{n-1}$, thus
\begin{equation}\label{fph}
 F''_h - \partial_h \phi'' \in \Poly^{k-1}(G)
 \end{equation}
for all $h \in \F_2^{n-1} \times \{0\}$.
On the other hand, from \eqref{fhh} (now applied to $F''$) and the vanishing of $F''_e$, we have 
$$ F''_{h+e} - F''_{h} \in \Poly^{k-1}(G)$$
Thus from the $e$-invariance of $\phi''$, we see that 
$$ (F''_{h+e} - \partial_{h+e} \phi'') - (F''_{h} - \partial_{h} \phi'')  \in \Poly^{k-1}(G)$$
and hence the property \eqref{fph} holds for all $h \in \F_2^n$, not just $h \in \F_2^{n-1} \times \{0\}$.  In particular,
$$ F_h - \partial_h (\phi+\phi'') \in \Poly^{k-1}(G)$$
for all $h$, thus closing the induction.
\end{proof}

The above theorem applies to cocycles taking values in $\T$.  For our application (and in particular, to solve the equation \eqref{S-eq}) we will need a variant of this theorem that applies to cocycles taking values in the smaller group $\frac{1}{2}\Z/\Z$, which is an elementary abelian $2$-group.  For this, we will need a stronger version of the $2$-homogeneity condition, which we only define for $k \geq 3$, but which we will define on more general nilspaces than elementary abelian $2$-groups with the degree $1$ filtration.

 \begin{definition}[Strongly $2$-homogeneous cocycles]\label{strong-cocycle}  Let $X$ be a finite nilspace, let $k \geq 3$, and let $\rho \colon C^{k+1}(X) \to \frac{1}{2}\Z/\Z$ be a $k$-cocycle taking values in the elementary abelian $2$-group $\frac{1}{2}\Z/\Z$.  We say that $\rho$ is \emph{strongly $2$-homogeneous} if we have $\rho = d^{k} \psi$ for some function $\psi \colon C^1(X) \to \T$ with $2\psi \in \Poly^{k-2}(C^1(X))$, where the nilspace structure on $C^1(X)$ is defined in Remark \ref{cube-nilspace}.
 \end{definition}

We first observe that strongly $2$-homogeneous cocycles on ${\mathcal D}^1(\F_2^n)$ are $2$-homogeneous (viewed as cocycles in $\T$).  Indeed, since $\rho = d^k \psi$ and $k \geq 3$, we have
$$  \rho_{h_1,h_1,h_2,h_3,\dots,h_k} = \partial_{h_1}^2 \partial_{h_2} (d^{k-3} \psi)_{h_3,\dots,h_k}$$
and
$$  \rho_{h_1,h_2,h_2,h_3,\dots,h_k} = \partial_{h_2}^2 \partial_{h_1} (d^{k-3} \psi)_{h_3,\dots,h_k}$$
and the condition \eqref{dkf-again} follows from \eqref{hhh}.  Now we obtain a variant of Theorem \ref{cocyc-trivial}.

\begin{theorem}[All strongly $2$-homogeneous cocycles are $\frac{1}{2}\Z/\Z$-valued coboundaries for elementary abelian $2$-groups]\label{cocyc-trivial-strong} Let $G = \F_2^n$ for some natural number $n$ (endowed with the degree one filtration ${\mathcal D}^1(G)$), and let $k \geq 3$.  Then a $k$-cocycle $\rho \colon G^{[k+1]} \to \frac{1}{2}\Z/\Z$ is a $k$-coboundary (in $\frac{1}{2}\Z/\Z$ rather than in $\T$) if and only if it is strongly $2$-homogeneous.
\end{theorem}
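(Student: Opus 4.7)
The plan is to handle the forward direction by a direct computation and the backward direction by bootstrapping from Theorem~\ref{cocyc-trivial}. For the forward direction, if $\rho = d^{k+1}F$ with $F \colon G \to \tfrac12\Z/\Z$, I set $\psi_h(x) \coloneqq \partial_h F(x)$, viewed as a function on $C^1(G) = G \times G$. Unpacking the nilspace cube structure on $C^1(G)$ (a $j$-cube in $C^1(G)$ is exactly a $(j+1)$-cube in $G$ with a distinguished ``pair'' direction) shows $\rho = d^k \psi$, while $2\psi_h = \partial_h(2F) = 0$ trivially lies in $\Poly^{k-2}(G)$, so $2\psi \in \Poly^{k-2}(C^1(G))$.

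For the backward direction, suppose $\rho = d^k \psi$ with $2\psi_h \in \Poly^{k-2}(G)$ for every $h$. The observation just before the theorem shows $\rho$ is $2$-homogeneous in the ordinary sense, so Theorem~\ref{cocyc-trivial} supplies some $F \colon G \to \T$ with $\rho = d^{k+1} F$. Comparing the two expressions $d^k\psi = \rho = d^{k+1}F$ via the parametrization \eqref{hik} gives
$$ \partial_{h_1} \cdots \partial_{h_k}\bigl(\psi_{h_{k+1}} - \partial_{h_{k+1}} F\bigr) = 0 $$
for all $h_1,\dots,h_{k+1}$, whence $\alpha_h \coloneqq \psi_h - \partial_h F \in \Poly^{k-1}(G)$ for every $h$. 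The doubling identity \eqref{double} gives $2\alpha_h \in 2\Poly^{k-1}(G) \subseteq \Poly^{k-2}(G)$, and combined with the strong $2$-homogeneity hypothesis $2\psi_h \in \Poly^{k-2}(G)$ this yields
$$ \partial_h(2F) = 2\psi_h - 2\alpha_h \in \Poly^{k-2}(G) $$
for every $h$, i.e., $2F \in \Poly^{k-1}(G)$ up to an irrelevant constant (which we absorb into $F$). Gaining this one degree over the bare output of Theorem~\ref{cocyc-trivial} is precisely what strong $2$-homogeneity is designed to accomplish.

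To conclude, I need to exhibit $P \in \Poly^k(G)$ with $2P = 2F$; once such $P$ is in hand, $F' \coloneqq F - P$ takes values in $\tfrac12\Z/\Z$ (since $2F' = 0$) and satisfies $d^{k+1} F' = d^{k+1} F = \rho$, exhibiting $\rho$ as a $k$-coboundary in $\tfrac12\Z/\Z$. This lifting reduces to the structural inclusion $\Poly^{k-1}(\F_2^n) \subseteq 2\Poly^k(\F_2^n)$: in the non-classical monomial basis for $\F_2^n$, every generator $\tfrac{x_S}{2^{j+1}}$ of $\Poly^{k-1}(G)$ (with $|S| + j \le k-1$) is manifestly $2 \cdot \tfrac{x_S}{2^{j+2}}$ with $\tfrac{x_S}{2^{j+2}} \in \Poly^k(G)$, while the constant ambiguity is handled by divisibility of $\T$. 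A coordinate-free alternative is to induct on $n$ along the same pattern as in the proof of Theorem~\ref{cocyc-trivial}, using Lemma~\ref{inv-lem} to strip off one coordinate. I expect the main obstacle in a careful write-up to be a clean treatment of this final lifting lemma; modulo it, the argument is a fairly mechanical combination of Theorem~\ref{cocyc-trivial} and the doubling lemma.
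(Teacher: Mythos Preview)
Your proof is correct and follows essentially the same approach as the paper: apply Theorem~\ref{cocyc-trivial} to get a $\T$-valued potential $F$, use the strong $2$-homogeneity hypothesis together with doubling to show $2F \in \Poly^{k-1}(G)$, then invoke the exact roots property to correct $F$ by a degree-$k$ polynomial so that it lands in $\tfrac12\Z/\Z$. The paper phrases the middle step abstractly on $C^1(G)$ (using Proposition~\ref{double-prop}) whereas you unpack it coordinate-wise as ``$\alpha_h \in \Poly^{k-1}(G)$ for each $h$'', but these are the same argument. Two minor remarks: the ``irrelevant constant'' you mention is not actually there, since $\partial_h(2F) \in \Poly^{k-2}(G)$ for all $h$ is exactly the condition $2F \in \Poly^{k-1}(G)$; and the ``final lifting lemma'' you flag as a potential obstacle is precisely the exact roots identity \eqref{double}, already established in the paper from Lemma~\ref{explicit-desc}, so no further work is needed.
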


\begin{proof}   First suppose that $\rho$ is a $k$-coboundary in $\frac{1}{2}\Z/\Z$, thus $\rho = d^{k+1} F$ for some $F \colon G \to \frac{1}{2}\Z/\Z$.  Then we can write $\rho = d^{k} \psi$ with $\psi \coloneqq dF$; since $2F=0$, we have $2\psi=0$, and so $\rho$ is certainly strongly $2$-homogeneous.

Conversely, suppose that $\rho$ is strongly $2$-homogeneous.
Applying Theorem \ref{cocyc-trivial} (viewing $\rho$ as a cocycle in the larger group $\T$), we already have
$$ \rho = d^{k+1} F$$
for some $F \colon G \to \T$.  However, we are not done yet, because this function $F$ does not necessarily lie in the smaller group $\frac{1}{2}\Z/\Z$.  To address this issue, we exploit the further properties of the strongly $2$-homogeneous cocycle $\rho$.  Writing $\rho = d^k \psi$, we have
$$ d^k (dF - \psi) = 0$$
or equivalently
$$ dF - \psi \in \Poly^{k-1}(C^1(G)).$$
Multiplying by $2$ using Proposition \ref{double-prop}, we conclude that
$$ d(2F) - 2\psi \in \Poly^{k-2}(C^1(G));$$
since $2\psi$ also lies in $\Poly^{k-2}(C^1(G))$ by hypothesis, we conclude
$$ d(2F) \in \Poly^{k-2}(C^1(G))$$
or equivalently
$$ 2F \in \Poly^{k-1}(C^1(G)).$$
By \eqref{double}, we may thus write $2F = 2F'$ for some $F' \in \Poly^k(G)$.  Then $F-F'$ takes values in $\frac{1}{2}\Z/\Z$ and
$$ \rho = d^{k+1} F = d^{k+1}(F-F'),$$
giving the claim.
\end{proof}

\section{A non-trivial cocycle}\label{nontriv-sec}

Henceforth we take $k=5$ and $p=2$.  Theorem \ref{cocyc-trivial-strong} asserts, roughly speaking, there are no ``non-trivial'' $k$-cocycles on degree one filtrations ${\mathcal D}^1(\F_2^n)$, where by ``non-trivial'' we mean a $k$-cocycle which is strongly $2$-homogeneous but not a $k$-coboundary.  However, it turns out that this claim breaks down as soon as $n=2$ if one instead considers the degree two filtration ${\mathcal D}^2(\F_2^n)$.  More precisely, the main result of this section is as follows.  For the remainder of the paper, we take $X_2$ to be the $2$-step nilspace
\begin{equation}\label{Y-def}
X_2 \coloneqq {\mathcal D}^2(\F_2^2),
\end{equation}
which is also $2$-homogeneous thanks to Proposition \ref{phom}.  

\begin{theorem}[A non-trivial cocycle]\label{nontriv-cocycle}  There exists a strongly $2$-homogeneous $5$-cocycle $\rho \colon C^6(X_2) \to \frac{1}{2}\Z/\Z$ on $X_2$ taking values in $\frac{1}{2}\Z/\Z$, which is not a $5$-coboundary (when viewed as a cocycle in $\T$). 
\end{theorem}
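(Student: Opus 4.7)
My plan is to construct $\rho$ via a well-chosen function $\psi \colon C^1(X_2) \to \T$, setting $\rho := d^5 \psi$. The formula for $\psi$ will be patterned on the heuristic expression $S = \binom{R}{2} Q_2/2$ from the introduction, where $R$ is a cubic lift of $Q_1$ to $\Z/4\Z$. Writing an edge in $C^1(X_2)$ as a pair of coordinates in $\F_2^2$, I would define $\psi$ by an explicit trilinear-style formula in those coordinates, tuned so that (i) $2\psi \in \Poly^3(C^1(X_2))$ as required by Definition \ref{strong-cocycle}, and (ii) the resulting $\rho$ encodes the non-trivial trilinear obstruction of Remark \ref{ctr}. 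Because $C^1(X_2)$ is finite (and indeed rather small), once the formula is written down, all further verifications become concrete finite computations.

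With $\psi$ chosen, the formal properties follow quickly. Since $\rho = d^5 \psi$, it is automatically a symmetric $5$-cocycle, and strong $2$-homogeneity is immediate from the very definition of that notion. The condition that $\rho$ take values in $\tfrac{1}{2}\Z/\Z$ is equivalent to $d^5(2\psi) = 0$, which holds because $2\psi \in \Poly^3(C^1(X_2))$ and $d^5$ annihilates polynomials of degree at most $4$. The polynomial-degree statement $2\psi \in \Poly^3(C^1(X_2))$ itself would be verified by a direct computation in the degree calculus on the nilspace $C^1(X_2)$ (as set up in Remark \ref{cube-nilspace}), using the standard fact that products of polynomials of degrees $a$ and $b$ have degree at most $a+b$ and that $d$ interacts predictably with degrees.

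The main obstacle is showing $\rho$ is not a $5$-coboundary in $\T$, and my strategy is to reduce this to the non-triviality of the trilinear symmetric $2$-cocycle $\rho'$ of Remark \ref{ctr} on $\mathcal{D}^1(\F_2^2)$. That $\rho'$ fails to be a $2$-coboundary in $\T$ is immediate from the contrapositive of Theorem \ref{cocyc-trivial}, because $\rho'$ is explicitly non-$2$-homogeneous. Suppose for contradiction that $\rho = d^6 F$ for some $F \colon X_2 \to \T$. I would construct a ``lift map'' $\Phi$ sending a $3$-cube in $\mathcal{D}^1(\F_2^2)$ to a suitably degenerate $6$-cube in $X_2 = \mathcal{D}^2(\F_2^2)$, exploiting the degree-$2$ filtration's capacity to absorb squared increments (essentially using the shift $k \mapsto k+3$ corresponding to the filtration shift $1 \to 2$ across appropriate cube dimensions), so that $\rho \circ \Phi$ equals $\rho'$ modulo terms that are manifestly $2$-coboundaries. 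The hypothesis $\rho = d^6 F$ would then pull back through $\Phi$ to exhibit $\rho'$ as a $2$-coboundary on $\mathcal{D}^1(\F_2^2)$, contradicting what we just noted. The crux is pinning down $\Phi$ precisely so that the pullback is exactly $\rho'$ plus a controlled error; since $|X_2| = 4$, one could alternatively bypass $\Phi$ entirely and verify non-coboundariness by a direct (if unenlightening) finite linear-algebra check over $\F_2$, but the route through Remark \ref{ctr} is conceptually cleaner and makes the relation between the two cocycles transparent.
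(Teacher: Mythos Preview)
Your proposal is correct and follows essentially the same approach as the paper. The paper likewise constructs an explicit $\psi$ (namely $\psi(x,x+h)=\frac{[x^{(1)}]^2[h^{(2)}]^2}{4}+\frac{x^{(1)}h^{(1)}x^{(2)}}{2}$), verifies $2\psi\in\Poly^3(C^1(X_2))$ directly, and for non-triviality uses exactly your ``lift map'' idea: restricting to the degenerate $6$-cubes in $X_2$ with all $h_i$ and $h_{ij}$ zero except $h_{12}=k_1$, $h_{34}=k_2$, $h_{56}=k_3$ collapses $d^6 F$ to $d^3 F$ on $\mathcal{D}^1(\F_2^2)$ and collapses $\rho$ to precisely the trilinear form of Remark~\ref{ctr}, with no error term at all (so your hedge ``plus a controlled error'' is unnecessary).
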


In the remainder of this section we establish this theorem; our original discovery of this cocycle was computer-assisted, and indeed one could easily verify the claims in this theorem from standard linear algebra packages, but we provide a human-verifiable proof of this theorem below.

It will be convenient to adopt the following notation from \cite[Definitions 6.1, 6.3]{taoziegler}. 

\begin{definition}[Concatenation and symmetric square]\cite{taoziegler} If $V$ is a vector space over a field $\F$, and $S \colon V^k \to \F$ and $T \colon V^l \to \F$ are symmetric multilinear forms, we define the \emph{concatenation} $S * T \colon V^{k+l} \to \F$ to be the symmetric multilinear form
$$ S * T( h_1,\dots,h_{k+l} ) \coloneqq \sum_{\{1,\dots,k+l\} = \{i_1,\dots,i_k\} \cup \{j_1,\dots,j_l\}} S(h_{i_1},\dots,h_{i_k}) T(h_{j_1},\dots,h_{j_l})$$
and similarly define the symmetric square $\Sym^2(S) \colon V^{2k} \to \F$ to be the symmetric multilinear form
\begin{align*}
&\Sym^2(S)( h_1,\dots,h_{2k} ) \\
&\quad \coloneqq \sum_{\{ \{i_1,\dots,i_k\}, \{j_1,\dots,j_k\}\}: \{1,\dots,2k\} = \{i_1,\dots,i_k\} \cup \{j_1,\dots,j_k\}} S(h_{i_1},\dots,h_{i_k}) S(h_{j_1},\dots,h_{j_k}).
\end{align*}
\end{definition}

One can similarly define higher symmetric powers $\Sym^m(S) \colon V^{mk} \to \F$, but we will only need the $m=2$ case here.

\begin{examples} If $B \colon V^2 \to \F$ is a symmetric  bilinear form, then $\Sym^2(B) \colon V^4 \to \F$ is the symmetric quartilinear form
$$ \Sym^2(B)(a,b,c,d) \coloneqq B(a,b) B(c,d) + B(a,c) B(b,d) + B(a,d) B(b,c),$$
while if $L \colon V \to F$ is a linear form, then $L*B \colon V^3 \to \F$ is the trilinear form
$$ L * B(a,b,c) \coloneqq L(a) B(b,c) + L(b) B(a,c) + L(c) B(a,b)$$
and $B*B = 2 \Sym^2(B)$; in particular, in characteristic two we have $B*B=0$.  By identifying $\F_2$ with a subgroup of $\T$, the trilinear form in Remark \ref{ctr} can be written 
\begin{equation}\label{rhoh}
\rho_{h_1,h_2,h_3}(x) = \frac{\Sym^2(L_1) * L_2(h_1,h_2,h_3)}{2} \mod 1,
\end{equation}
where $L_1,L_2 \colon \F_2^2 \to \F_2$ are the coordinate functions $L_i(x_1,x_2) \coloneqq x_i$. 
\end{examples}

A $6$-cube in $X_2 = {\mathcal D}^2(\F_2^2)$ can be viewed as a pair $(Q^{(1)}, Q^{(2)})$, where $Q^{(1)}, Q^{(2)} \colon \F_2^6 \to \F_2$ are quadratic polynomials (cf. Definition \ref{poly-def}), so in particular their second derivatives can be viewed as symmetric bilinear forms $d^2 Q^{(i)} \colon \F_2^6 \times \F_2^6 \to \F_2$, defined for $i=1,2$ by the formula
$$ d^2 Q^{(i)} (h, k) \coloneqq \partial_h \partial_k Q^{(i)}$$
(note that the right-hand side is a constant and thus identifiable with an element of $\F_2$).  We then define the cocycle $\rho$ by
\begin{equation}\label{rhoqf}
\rho(Q^{(1)}, Q^{(2)}) \coloneqq \frac{\Sym^2(d^2 Q^{(1)}) * (d^2 Q^{(2)})(e_1,\dots,e_6)}{2} \mod 1
\end{equation}
with $e_1,\dots,e_6$ the standard basis of $\F_2^6$; observe the analogy with the construction in \eqref{rhoh}.

One can describe $\rho$ more explicitly as follows.  Instead of using the pair $(Q^{(1)}, Q^{(2)})$, one can alternatively parameterize a $6$-cube in $X_2$ as a tuple
\begin{equation}\label{6-cube}
 \left(x + \sum_{i=1}^6 h_i \omega_i + \sum_{1 \leq i < j \leq 6} h_{ij} \omega_i \omega_j\right)_{\omega \in \{0,1\}^6}
 \end{equation}
for some $x, h_i, h_{ij} \in X_2$. We write $x$ in coordinates as $x = (x^{(1)}, x^{(2)})$ for $x^{(1)}, x^{(2)} \in \F_2$, and similarly write $h_i = (h_i^{(1)}, h_i^{(2)})$ and $h_{ij} = (h_{ij}^{(1)}, h_{ij}^{(2)})$; the polynomials $Q^{(k)}$, $k=1,2$ in the previous description of a $6$-cube in $X_2$ then take the form
$$ Q^{(k)}(\omega_1,\dots,\omega_6) = x^{(k)} + \sum_{i=1}^6 h^{(k)}_i \omega_i + \sum_{1 \leq i < j \leq 6} h^{(k)}_{ij} \omega_i \omega_j,$$
so in particular
$$ d^2 Q^{(k)}( \omega, \omega' ) = \sum_{1 \leq i < j \leq 6} h^{(k)}_{ij} (\omega_i \omega'_j + \omega'_i \omega_j)$$
for $\omega = (\omega_1,\dots,\omega_6)$, $\omega' = (\omega'_1,\dots,\omega'_6)$ in $\F_2^6$.
From \eqref{rhoqf} we conclude that the cocycle $\rho$ applied to the $6$-cube \eqref{6-cube} is then given by the formula
\begin{equation}\label{had}
\begin{split}
&\rho \left( \left(x + \sum_{i=1}^6 h_i \omega_i + \sum_{1 \leq i < j \leq 6} h_{ij} \omega_i \omega_j\right)_{\omega \in \{0,1\}^6} \right)\\ &\quad \coloneqq \frac{\sum_{\{ \{a,b\},\{c,d\}\}, \{e,f\}: \{1,\dots,6\} = \{a,b\} \cup \{c,d\} \cup \{e,f\}} h^{(1)}_{ab} h^{(1)}_{cd} h^{(2)}_{ef}}{2} \mod 1 
\end{split}
\end{equation}
where the sum is over the $\frac{1}{2!} \frac{6!}{2! 2! 2!} = 45$ different ways one can partition $\{1,\dots,6\}$ into three doubleton sets $\{a,b\}, \{c,d\}, \{e,f\}$, where we only sum once for each choice of $\{ \{a,b\}, \{c,d\}\}$ and $\{e,f\}$ (so that each monomial $h^{(1)}_{ab} h^{(1)}_{cd} h^{(2)}_{ef}$ occurs at most once).   

The function $\rho$ is clearly symmetric with respect to permutations of the indices $1,\dots,6$.  If we fix the $h_{ij}$ for $1 < i < j \leq 6$, then this function is linear in the remaining variables $h_{1i}$, $1 < i < 6$, from which it is easy to verify that $\rho$ obeys the cocycle property in Definition \ref{nil-cocycle-def}(ii).  Thus $\rho$ is a $5$-cocycle.

Suppose for contradiction that $\rho$ is a $5$-coboundary, thus there is a function $F \colon X_2 \to \T$ such that
\begin{equation}\label{help}
\begin{split}
& \rho \left( \left(x + \sum_{i=1}^6 h_i \omega_i + \sum_{1 \leq i < j \leq 6} h_{ij} \omega_i \omega_j\right)_{\omega \in \{0,1\}^6} \right) \\
&\quad = \sum_{\omega \in \{0,1\}^6} (-1)^{|\omega|} F\left( x + \sum_{i=1}^6 h_i \omega_i + \sum_{1 \leq i < j \leq 6} h_{ij} \omega_i \omega_j \right)
\end{split}
\end{equation}
whenever $x, h_i, h_{ij} \in X_2$.  We now descend from this sixth order equation on $X_2={\mathcal D}^2(\F_2^2)$ to a third order equation on ${\mathcal D}^1(\F_2^2)$ as follows. We restrict to those cubes in which all the $h_i$ and $h_{ij}$ vanish except for $h_{12}, h_{34}, h_{56}$, which we relabel as $k_1, k_2, k_3$ respectively.  Then the right-hand side of \eqref{help} cancels down to
$$ \sum_{\omega \in \{0,1\}^3} (-1)^{3-|\omega|} F\left( x + \sum_{i=1}^3 k_i \omega_i \right)$$
while the right-hand side of \eqref{had} simplifies to
$$ \frac{k^{(1)}_1 k^{(1)}_2 k^{(2)}_3 + k^{(1)}_1 k^{(2)}_2 k^{(1)}_3 + k^{(2)}_1 k^{(1)}_2 k^{(1)}_3}{2} \mod 1$$
and hence on ${\mathcal D}^1(\F_2^2)$  we have
$$ (d^3 F)_{k_1,k_2,k_3} = \frac{k^{(1)}_1 k^{(1)}_2 k^{(2)}_3 + k^{(1)}_1 k^{(2)}_2 k^{(1)}_3 + k^{(2)}_1 k^{(1)}_2 k^{(1)}_3}{2} \mod 1$$
for all $k_1,k_2,k_3 \in \F_2^2$.
However, as observed in Remark \ref{ctr}, the right-hand side does not obey the $2$-homogeneity condition \eqref{dkf-again} and so cannot be a coboundary on ${\mathcal D}^1(\F_2^2)$, giving the desired contradiction.

Finally, we need to show that $\rho = d^5 \psi$ for some $\psi \colon X_2^{[1]} \to \T$ with $2\psi$ a cubic polynomial.  We let $[] \colon \F_2 \to \Z/4\Z$ be any section of the projection map $\mod 2 \colon \Z/4\Z \to \F_2$; in particular one has $[0]^2 = 0 \mod 4$ and $[1]^2 = 1 \mod 4$ regardless of the choice of section.  An element of $C^1(X_2)$ takes the form $(x,x+h)$ with $x,h \in \F_2^2$.  We write $x = (x^{(1)}, x^{(2)})$, $h = (h^{(1)}, h^{(2)})$ and define $\psi$ by the formula
\begin{equation}\label{def-psi}
\psi(x,x+h) \coloneqq \frac{[x^{(1)}]^2 [h^{(2)}]^2}{4} + \frac{x^{(1)} h^{(1)} x^{(2)}}{2} \mod 1.
\end{equation} 
We first verify that $2\psi$ is a cubic polynomial.  Since $[x]^2=x^2=x \mod 2$, we have
$$ 2 \psi(x,x+h) = \frac{x^{(1)} h^{(2)}}{2}\ \mod 1.$$
According to Remark \ref{cube-nilspace}, a $4$-cube in $C^1(X_2)$ corresponds to a $5$-cube in $X_2$, and by Lemma \ref{lem-filtgrpnilspace}, a $5$-cube in $X_2$ can be computed according to the formula in \eqref{tup}, and takes the form
$$ \left( \left(x + \sum_{i=1}^4 h_i \omega_i + \sum_{1 \leq i < j \leq 4} h_{ij} \omega_i \omega_j, x + \sum_{i=1}^4 h_i \omega_i + \sum_{1 \leq i < j \leq 4} h_{ij} \omega_i \omega_j + h_0 + \sum_{i=1}^4 h_{0i} \omega_i \right) \right)_{\omega \in \{0,1\}^4}$$
for some $x, h_0, h_i, h_{0i}, h_{ij} \in X_2$ (cf. Example \ref{example-cubes}).  The function $d^4(2\psi)$ applied to this cube is then equal to
$$ \sum_{\omega \in \{0,1\}^4} (-1)^{|\omega|}
\frac{(x^{(1)} + \sum_{i=1}^4 h^{(1)}_i \omega_i + \sum_{1 \leq i < j \leq 4} h^{(1)}_{ij} \omega_i \omega_j) (h^{(2)}_0 + \sum_{i=1}^4 h^{(2)}_{0i} \omega_i)}{2} \mod 1.$$
But the numerator is cubic in the $\omega_i$ and thus does not contain any monomials of the form $\omega_1 \omega_2 \omega_3 \omega_4$.  This expression therefore vanishes, and so $2\psi$ is cubic as required.

It remains to show that $\rho = d^5 \psi$.  As before, from Remark \ref{cube-nilspace}, Lemma \ref{lem-filtgrpnilspace}, and the formula in \eqref{tup}, a $5$-cube in $C^1(X_2)$ takes the form 
$$ \left( \left(x + \sum_{i=1}^5 h_i \omega_i + \sum_{1 \leq i < j \leq 4} h_{ij} \omega_i \omega_j, x + \sum_{i=1}^5 h_i \omega_i + \sum_{1 \leq i < j \leq 5} h_{ij} \omega_i \omega_j + h_0 + \sum_{i=1}^5 h_{0i} \omega_i \right) \right)_{\omega \in \{0,1\}^5}$$
for some $x, h_0, h_i, h_{0i}, h_{ij} \in X_2$.  By the definition \eqref{def-psi} of $\psi$, the function $d^5 \psi$ applied to this cube is the sum of
\begin{equation}\label{term-1}
 \sum_{\omega \in \{0,1\}^5} (-1)^{5-|\omega|}
\frac{[X^{(1)}(\omega)]^2 [H^{(2)}(\omega)]^2}{4} \mod 1
\end{equation}
and
\begin{equation}\label{term-2}
 \sum_{\omega \in \{0,1\}^5} (-1)^{5-|\omega|}
\frac{X^{(1)}(\omega) H^{(1)}(\omega) X^{(2)}(\omega)}{2} \mod 1
\end{equation}
where
$$ X^{(a)}(\omega) \coloneqq x^{(a)} + \sum_{i=1}^5 h^{(a)}_i \omega_i + \sum_{1 \leq i < j \leq 5} h^{(a)}_{ij} \omega_i \omega_j$$
and
$$ H^{(a)}(\omega) \coloneqq h^{(a)}_0 + \sum_{i=1}^5 h^{(a)}_{0i} \omega_i$$
for $a=1,2$.  We first consider \eqref{term-2}.  The numerator $X^{(1)}(\omega) H^{(1)}(\omega) X^{(2)}(\omega)$ is quintic in the variables $\omega_i$ (when viewed as a function from $\F_2^5$ to $\F_2$ for a fixed choice of the $x, h_0, h_i, h_{0i}, h_{ij}$). We have the identity 
\[
\sum_{\omega \in \{0,1\}^5} (-1)^{5 - |\omega|} \prod_{i \in T} \omega_i
=
\begin{cases}
1, & \text{if } T = \{1,2,\dots,5\}, \\[6pt]
0, & \text{otherwise.}
\end{cases}
\]
so only monomials that use all five distinct variables $\omega_1 \dots \omega_5$ survive, and the sum of the coefficients of these monomials after expanding out all the definitions can be expressed as
\begin{equation}\label{ab1}
\frac{\sum^* h^{(1)}_{ab} h^{(1)}_{cd} h^{(2)}_{ef}}{2} \mod 1
\end{equation}
where the sum $\sum^*$ ranges over the $30$ pairs of sets $\{\{a,b\}, \{c,d\}\}, \{e,f\}$ with $\{0,1,2,3,4,5\} = \{a,b\} \cup \{c,d\} \cup \{e,f\}$ such that $0$ lies in one of $\{a,b\}$ or $\{c,d\}$. 

Now consider \eqref{term-1}.  Using the easily verified identities $[a+b]^2 = [a]^2 + [b]^2 + 2[ab]$ and $[a \omega]^2 = [a]^2\omega$ for $a,b \in \F_2$ and $\omega \in \{0,1\}$ (and noting that the map $a \mapsto 2[a]$ is an additive homomorphism), we can expand out 
$$
[H^{(2)}(\omega)]^2 = \left[h^{(2)}_0 + \sum_{i=1}^5 h^{(2)}_{0i} \omega_i\right]^2 = [h^{(2)}_0]^2 + \sum_{i=1}^5 [h^{(2)}_{0i}]^2 \omega_i + 2 [Q(\omega)]$$
where $Q \colon \{0,1\}^5 \to \F_2$ is the quadratic
$$ Q(\omega) \coloneqq \sum_{i=1}^5 h^{(2)}_0 h^{(2)}_{0,1} \omega_i + \sum_{1 \leq i < j \leq 5} h^{(2)}_{0i} h^{(2)}_{0j} \omega_i \omega_j,$$
and similarly
$$ [X^{(1)}(\omega)]^2
= [x^{(1)}]^2 + \sum_{i=1}^5 [h^{(1)}_i]^2 \omega_i + \sum_{1 \leq i < j \leq 5} [h^{(1)}_{ij}]^2 \omega_i \omega_j + 2[R(\omega)]$$
where $R\colon \{0,1\}^5 \to \F_2$ is the quartic
\begin{align*}
R(\omega) &\coloneqq \sum_{i=1}^5 x^{(1)} h^{(1)}_i \omega_i + \sum_{1 \leq i < j \leq 5} (h^{(1)}_i h^{(1)}_j + x^{(1)} h^{(1)}_{ij}) \omega_i \omega_j \\
&\quad + \sum_{1 \leq i < j < k \leq 5} (h^{(1)}_i h^{(1)}_{jk} + h^{(1)}_j h^{(1)}_{ik} + h^{(1)}_k h^{(1)}_{ij}) \omega_i \omega_j \omega_k \\
&\quad + \sum_{1 \leq i < j < k < l \leq 5} (h^{(1)}_{ij} h^{(1)}_{kl} + h^{(1)}_{ik} h^{(1)}_{jl} + h^{(1)}_{il} h^{(1)}_{jk}) \omega_i \omega_j \omega_k \omega_l.
\end{align*}
The product $[X^{(1)}(\omega)]^2 [H^{(2)}(\omega)]^2$ is then quintic (the product of $2Q$ and $2R$ would be sextic, but vanishes modulo $4$), and the alternating sum $\sum_{\omega \in \{0,1\}^5} (-1)^{5-|\omega|}$ is then extracting the $\omega_1 \dots \omega_5$ coefficient, which can only arise from the terms
$$ 2[R(\omega)] \cdot \sum_{i=1}^5 [h^{(2)}_{0i}]^2 \omega_i$$
in the numerator.  Inspecting the quartic terms of $R(\omega)$, we conclude that \eqref{term-1} is of the form
\begin{equation}\label{ab2} \frac{\sum^{**} h^{(1)}_{ab} h^{(1)}_{cd} h^{(2)}_{ef}}{2} \mod 1
\end{equation}
where the sum $\sum^{**}$ ranges over the $15$ pairs of sets $\{\{a,b\}, \{c,d\}\}, \{e,f\}$ with $\{0,1,2,3,4,5\} = \{a,b\} \cup \{c,d\} \cup \{e,f\}$ such that $0$ does not lie in either $\{a,b\}$ or $\{c,d\}$. 
Summing \eqref{ab1}, \eqref{ab2}, we obtain the claim.
This concludes the proof of Theorem \ref{nontriv-cocycle}. 

\section{Two key nilspaces}

We now use the non-trivial cocycle $\rho$ introduced in the previous section to construct a family of finite $5$-step nilspaces $X_{5,r}$ for $1 \leq r \leq 5$ that will play a key role in our counterexamples.  To prove our main result in Theorem \ref{counter} we will use the larger and more complicated nilspace $X_{5,5}$, however in Proposition \ref{injcont} below we obtain a weaker counterexample with significantly less effort using the smaller and simpler nilspace $X_{5,1}$.

Fix $1 \leq r \leq 5$.  We define $X_2$ by \eqref{Y-def}, and let $\rho$ be the non-trivial cocycle from Theorem \ref{nontriv-cocycle}.  We define the nilspace $X_{5,r}$ to be the Cartesian product
$$ X_{5,r} \coloneqq X_2 \times \frac{1}{2^r}\Z/\Z$$
with the $n$-cubes $C^n(X_{5,r})$ defined to be the space of all tuples $((Q,S)(\omega))_{\omega \in \{0,1\}^n}$, where $Q \colon \F_2^n \to X_2$ and $S \colon \F_2^n \to \frac{1}{2^r}\Z/\Z$ are functions (identifying $\{0,1\}^n$ with $\F_2^n$) that obey the following axioms:
\begin{itemize}
    \item[(i)]  $Q$ is a nilspace morphism from $\F_2^n$ to $X_2$ (or equivalently by \eqref{nbox}, that $Q \in C^n(X_2)$).  In other words, $Q = (Q_1,Q_2) \in \Poly^2(\F_2^n \to \F_2^2)$ is a pair of classical quadratic polynomials $Q_1,Q_2 \colon \F_2^n \to \F_2$.  In particular, one has $d^3 Q = 0$.
    \item[(ii)]  $S$ obeys the equation \eqref{S-eq}
for all $x,h_1,\dots,h_6 \in \F_2^n$.  Equivalently, one has $d^6 S = Q^* \rho$, where $Q^* \rho \colon C^6(\F_2^n) \to \frac{1}{2}\Z/\Z$ is the pullback of $\rho$, defined by
$$ Q^* \rho( (x_\omega)_{\omega \in \{0,1\}^6} ) \coloneqq \rho( (Q(x_\omega))_{\omega \in \{0,1\}^6} ).$$
\end{itemize}
More succinctly, one has
\begin{equation}\label{eq-5cubes}
    C^n(X_{5,r}) = \{ (Q,S) \colon \F_2^n \to X_{5,r}: d^3 Q = 0; d^6 S = Q^* \rho \}.
\end{equation}

We will shortly verify that $X_{5,r}$ is indeed a nilspace, but first we establish an important lemma that exploits the strong $2$-homogeneity of $\rho$ to allow one to lift $n$-cubes in $X_2$ to $n$-cubes in $X_{5,r}$.

\begin{lemma}[Lifting lemma]\label{lift}  Let $r \geq 1$ and $n \geq 0$, and let $Q \in C^n(X_2)$.  Then there exists a map $S \colon \F_2^n \to \frac{1}{2^r}\Z/\Z$ such that $(Q,S) \in C^n(X_{5,r})$.  Furthermore, the set of such $S$ forms a coset of $\Poly^5(\F_2^n \to \frac{1}{2^r}\Z/\Z)$.
\end{lemma}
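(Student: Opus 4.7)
The plan is to reduce the lemma to Theorem \ref{cocyc-trivial-strong}, applied to the pullback cocycle $Q^*\rho$ viewed on ${\mathcal D}^1(\F_2^n)$. The coset description will then be immediate, since $\Poly^5(\F_2^n \to \frac{1}{2^r}\Z/\Z)$ is the kernel of $d^6$.

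First I would observe that any $Q \in C^n(X_2)$, being a pair of quadratic polynomials $\F_2^n \to \F_2^2$, is the same data as a nilspace morphism $Q \colon {\mathcal D}^1(\F_2^n) \to X_2$ (this is the content of \eqref{nbox}). By functoriality of the cube structure, $Q$ therefore induces a map $C^m(Q) \colon C^m(\F_2^n) \to C^m(X_2)$ for each $m$, and in particular the pullback $Q^*\rho \colon C^6(\F_2^n) \to \frac{1}{2}\Z/\Z$ is well-defined. The symmetry of $Q^*\rho$ in the shift variables is inherited directly from that of $\rho$, and the cocycle identity \eqref{cocycle-h1} for $Q^*\rho$ follows from the corresponding identity for $\rho$ applied to the image 6-cubes under $Q$; so $Q^*\rho$ is a $5$-cocycle on ${\mathcal D}^1(\F_2^n)$.

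Next I would verify the strong $2$-homogeneity of $Q^*\rho$. By Theorem \ref{nontriv-cocycle}, we may write $\rho = d^5\psi$ for some $\psi \colon C^1(X_2) \to \T$ with $2\psi \in \Poly^3(C^1(X_2))$. The induced map $C^1(Q) \colon C^1(\F_2^n) \to C^1(X_2)$ is itself a nilspace morphism with respect to the nilspace structures on $1$-cube spaces from Remark \ref{cube-nilspace}, and pullback commutes with the derivative $d^5$ via the identification $C^6 \cong C^5 \circ C^1$. Hence $Q^*\rho = d^5(Q^*\psi)$ where $Q^*\psi \coloneqq \psi \circ C^1(Q)$. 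Since composition with a nilspace morphism preserves polynomial degree, we obtain $2(Q^*\psi) = (2\psi) \circ C^1(Q) \in \Poly^3(C^1(\F_2^n))$, verifying the hypothesis of Definition \ref{strong-cocycle}.

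Applying Theorem \ref{cocyc-trivial-strong} to the strongly $2$-homogeneous cocycle $Q^*\rho$ produces $S_0 \colon \F_2^n \to \frac{1}{2}\Z/\Z$ with $d^6 S_0 = Q^*\rho$; viewing $S_0$ as valued in the larger group $\frac{1}{2^r}\Z/\Z$ gives $(Q,S_0) \in C^n(X_{5,r})$. For the coset description, any valid $S$ satisfies $d^6(S - S_0) = Q^*\rho - Q^*\rho = 0$, so $S - S_0 \in \Poly^5(\F_2^n \to \frac{1}{2^r}\Z/\Z)$; conversely, adding any such polynomial to $S_0$ preserves the equation $d^6 S = Q^*\rho$. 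The main obstacle is bookkeeping: keeping track of which cube structures, nilspace morphisms, and derivative conventions appear at each step. Once the functoriality of pullback is set up, however, no further work is required beyond the invocation of Theorem \ref{cocyc-trivial-strong}.
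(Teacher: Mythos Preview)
Your proposal is correct and follows essentially the same approach as the paper: both reduce to Theorem \ref{cocyc-trivial-strong} by showing that the pullback $Q^*\rho$ is a strongly $2$-homogeneous $5$-cocycle, obtain $S_0$ valued in $\frac{1}{2}\Z/\Z \subset \frac{1}{2^r}\Z/\Z$, and deduce the coset structure from $\ker d^6 = \Poly^5$. The only difference is that you spell out the functoriality argument for why pullback preserves strong $2$-homogeneity, whereas the paper simply asserts this is ``not difficult to see.''
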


\begin{proof}  We first show existence.  Since $\rho$ is a strongly $2$-homogeneous $5$-cocycle, it is not difficult to see that the pullback $Q^* \rho$ is also.  Hence by Theorem \ref{cocyc-trivial-strong}, $Q^* \rho$ is a $5$-coboundary in $\frac{1}{2}\Z/\Z$, thus there exists $S \colon \F_2^n \to \frac{1}{2}\Z/\Z$ such that $Q^* \rho = d^6 S$, which is precisely the condition \eqref{S-eq}.  Since $\frac{1}{2}\Z/\Z$ is contained in $\frac{1}{2^r}\Z/\Z$, we have obtained an $n$-cube $(Q,S)$ in $X_{5,r}$ as required.

Now suppose that $(Q,S), (Q,S')$ are both $n$-cubes in $X_{5,r}$.  Then $d^6 S = d^6 S' = Q^* \rho$ and hence $d^6 (S-S') = 0$, thus $S$ and $S'$ differ by an element of $\Poly^5(\F_2^n \to \frac{1}{2^r}\Z/\Z)$.  Reversing these implications, we see that the set of $S$ for which $(Q,S) \in C^n(X_{5,r})$ is a coset of $\Poly^5(\F_2^n \to \frac{1}{2^r}\Z/\Z)$ as claimed.
\end{proof}

In fact, with the specific choice of cocycle we have constructed, we can explicitly describe the coset in Lemma \ref{lift}.  

\begin{lemma}[Explicit description of lift]\label{expl}  Let the notation and hypotheses be as in Lemma \ref{lift}.  Write $Q = (Q_1, Q_2)$, thus $Q_1, Q_2 \colon \F_2^n \to \F_2$ are classical quadratic polynomials.  Let $R \in \Poly^3(\F_2^n \to \Z/4\Z)$ be a cubic polynomial such that $2 \frac{R}{4} = \frac{Q_1}{2}\mod 1$ (or equivalently that $R = Q_1 \mod 2$); the existence of such a polynomial follows from \eqref{double}.  Then the coset of $S$ in Lemma \ref{lift} is equal to
$$ \frac{\binom{R}{2} Q_2}{2} + \Poly^5\left(\F_2^n \to \frac{1}{2^r}\Z/\Z\right)$$
where (as in Section \ref{notation-sec}) $\binom{a}{2} \in \F_2$ is equal to $1$ when $a = 2,3 \mod 4$ and $0$ for $a=0,1 \mod 4$.
\end{lemma}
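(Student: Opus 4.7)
My plan is to use Lemma~\ref{lift} to reduce the claim to verifying that a single explicit candidate, $S_0 := \binom{R}{2} Q_2 / 2$, belongs to the coset; this automatically implies that the coset equals $S_0 + \Poly^5(\F_2^n \to \frac{1}{2^r}\Z/\Z)$. That $S_0$ takes values in $\frac{1}{2}\Z/\Z \subseteq \frac{1}{2^r}\Z/\Z$ is immediate from $\binom{R(x)}{2} Q_2(x) \in \F_2$, so the heart of the argument is the identity $d^6 S_0 = Q^* \rho$. Multiplying by $2$ and using the explicit formula \eqref{had} for $\rho$, this reduces to the $\F_2$-valued identity
$$
d^6\!\bigl(\tbinom{R}{2} Q_2\bigr)(x;\vec h) \;=\; \sum\nolimits^{(45)} d^2 Q_1(h_a,h_b)\, d^2 Q_1(h_c,h_d)\, d^2 Q_2(h_e,h_f) \pmod{2},
$$
with the sum over the $45$ partitions appearing in \eqref{had}.

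I would first check that the identity is insensitive to the particular cubic lift $R$. Indeed, if $R' = R + 2U$ is another such lift, then $U$ must lie in $\Poly^3(\F_2^n \to \F_2)$ (since $R' - R$ is a cubic $\Z/4\Z$-valued polynomial taking values in $2\Z/4\Z \cong \F_2$), and the elementary congruence $\binom{a+b}{2} \equiv \binom{a}{2} + \binom{b}{2} + ab \pmod{2}$ for $a,b \in \Z/4\Z$ (combined with $\binom{2U}{2} = U$) yields $\binom{R'}{2} = \binom{R}{2} + U$ in $\F_2$. As $U$ is cubic and $Q_2$ is quadratic, $UQ_2$ lies in $\Poly^5(\F_2^n \to \F_2)$, so the two candidates $S_0, S_0'$ belong to the same $\Poly^5$-coset.

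With this freedom I would then select the \emph{canonical} classical quadratic lift $R = \sum_i a_i x_i + \sum_{i<j} b_{ij} x_i x_j \in \Z/4\Z$ obtained by viewing the $\F_2$-coefficients of $Q_1$ as $\{0,1\}$-valued integers. Then $R^2 - R = 2\binom{R}{2}$ is classical quartic in $\Z/4\Z$, which forces $\binom{R}{2}$ to have $\F_2$-degree at most $4$; consequently $\binom{R}{2} Q_2 \in \Poly^6(\F_2^n \to \F_2)$, and $d^6 S_0$ is automatically independent of $x$, consistent with the $x$-independence of the right-hand side. The remaining content is the equality of the two constants in $\vec h$, which I would obtain by expanding $R^2$ directly: the only contributions to the degree-$4$ part of $\binom{R}{2} = (R^2 - R)/2 \pmod{2}$ come from the disjoint-support cross products $2 b_{ij} b_{kl}\, x_i x_j x_k x_l$ with $\{i,j\} \cap \{k,l\} = \emptyset$, yielding the degree-$4$ part of $\binom{R}{2}$ as $\sum b_{ij} b_{kl}\, x_i x_j x_k x_l$ summed over unordered partitions of $4$-subsets into pairs. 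Multiplying by the degree-$2$ part of $Q_2$, extracting the degree-$6$ component, and polarizing (using $b_{ij} = d^2 Q_1(e_i,e_j)$ in the canonical lift) then reproduces exactly the claimed $45$-term sum.

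I expect the main obstacle to be this last combinatorial step: verifying that the ``overlapping-support'' cross products in $R^2$, arising from pairs of monomials $b_{ij} x_i x_j$ and $b_{kl} x_k x_l$ that share a variable (which via $x_i^2 = x_i$ collapse to something of degree $\leq 3$ in $\Z/4\Z$), really do contribute only to the degree-$\leq 3$ portion of $\binom{R}{2}$, and so cannot contaminate the degree-$6$ coefficient of $\binom{R}{2} Q_2$ that controls $d^6 S_0$.
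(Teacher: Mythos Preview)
Your approach is correct and leads to a valid proof, but it takes a different route from the paper's.  The paper works directly with an \emph{arbitrary} lift $R$: after the same reduction to
\[
d^6\!\left(\tbinom{R}{2} Q_2\right) = \Sym^2(d^2 Q_1) * d^2 Q_2 \pmod 2,
\]
it applies the Leibniz rule \eqref{leibniz} to peel off the quadratic factor $Q_2$ (its second derivatives being constants), reducing the claim to $d^4\binom{R}{2} = \Sym^2(d^2 Q_1)$.  That identity is then obtained either by citing \cite[Lemma~6.4, Example~6.5]{taoziegler} or by iterating the difference identity $\partial_h\binom{F}{2} = \binom{\partial_h F}{2} + F\,\partial_h F \pmod 2$ together with $R \equiv Q_1 \pmod 2$, $d^3 Q_1 = 0$, and $d^4 R = 0$.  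This is basis-free and avoids your preliminary step of showing independence of the lift.

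Your route---proving lift-independence first, then passing to the canonical $\{0,1\}$-coefficient lift and reading off the degree-$4$ part of $\binom{R}{2}$ by expanding $(R^2 - R)/2$ over $\Z$---is more elementary in that it never invokes the derivative identity for $\binom{F}{2}$, trading that for an explicit monomial computation.  The only place to tighten is the word ``polarizing'': in characteristic $2$ you cannot recover a symmetric multilinear form from its diagonal, so you should say explicitly that both sides are symmetric $\F_2$-multilinear in $h_1,\dots,h_6$ and \emph{alternating} (the left side because $\partial_{e_i}^2 = 0$ on $\F_2$-valued functions; the right side by the swap-$1{,}2$ involution on partitions, whose fixed points contain a factor $d^2 Q_i(h,h)=0$), and hence are determined by their values on tuples of \emph{distinct} basis vectors, which is exactly what your coefficient extraction computes.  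The ``overlapping-support'' worry you flag is not an obstacle: in the multilinear monomial basis over $\Z$ (using $|x_i|^2 = |x_i|$), degrees are well-defined and such cross terms land in degree $\le 3$ by construction.
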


\begin{proof} By Lemma \ref{lift}, it suffices to show that
$$\partial_{h_1} \dots \partial_{h_6} \frac{\binom{R}{2} Q_2}{2}(x) = \rho( (Q(x + \omega \cdot \vec h))_{\omega \in \{0,1\}^6} ) \mod 1$$
for $x \in \F_2^n$ and $\vec h = (h_1,\dots,h_6) \in (\F_2^n)^6$.  By \eqref{had}, it suffices to show that 
\begin{equation}\label{had2}
   \partial_{h_1} \dots \partial_{h_6} \left(\binom{R}{2} Q_2\right) = \sum_{\{\{a,b\},\{c,d\}\},\{e,f\}: \{1,\dots,6\} = \{a,b\} \cup \{c,d\} \cup \{e,f\}} (\partial_{h_a} \partial_{h_b} Q_1) (\partial_{h_c} \partial_{h_d} Q_1) (\partial_{h_e} \partial_{h_f} Q_2) 
\end{equation}
for all $h_1,\dots,h_6 \in \F_2^n$.
The expressions in parentheses on the right-hand side are all constants since $Q$ is quadratic.  Iteratively applying the Leibniz rule \eqref{leibniz} and since $\partial_h\partial_k = \partial_k\partial_h$, we have   
\[
\partial_{h_1}\cdots\partial_{h_6}(P_1 P_2)
=
\sum_{\substack{S,T \subseteq \{1,\dots,6\} \\ S \cup T = \{1,\dots,6\}}}
\biggl( \prod_{i \in S} \partial_{h_i} P_1 \biggr)
\biggl( \prod_{j \in T} \partial_{h_j} P_2 \biggr).
\]
Since $\binom{R}{2}$ is a quartic\footnote{To justify that $\binom{R}{2}$ is quartic, observe that for any $a,b \in \Z/4\Z$ one has
\[
\binom{a+b}{2} \equiv \binom{a}{2} + \binom{b}{2} + (a \bmod 2)(b \bmod 2) \mod{2},
\]
which implies the derivative identity
\begin{equation}\label{fh}
\partial_h \binom{F}{2} = \binom{\partial_h F}{2} + F \partial_h F \mod 2
\end{equation}
for any $F \colon \F_2^n \to \Z/4\Z$ and $h \in \F_2^n$.  Applying this with $F = R$ and iterating in the directions $h_1,\dots,h_5$, and using the Leibniz rule \eqref{leibniz} to expand derivatives of products, one finds that every term in $\partial_{h_1}\cdots\partial_{h_5} \binom{R}{2}$ contains either a fourth derivative of $R$ or a third derivative of $R$ modulo $2$.  These vanish because $R$ is cubic ($d^4 R = 0$) and $R \equiv Q_1 \pmod{2}$ with $Q_1$ quadratic (so $d^3 R \equiv d^3 Q_1 = 0 \pmod{2}$).  Thus $d^5 \binom{R}{2} = 0$, and hence $\binom{R}{2}$ has degree at most $4$.} and $Q_2$ is a quadratic, it follows from the previous identity that in order to show \eqref{had2} it suffices to show that
$$ \partial_{h_1} \dots \partial_{h_4} \binom{R}{2} = \sum_{\{\{a,b\},\{c,d\}\}: \{1,\dots,4\} = \{a,b\} \cup \{c,d\}} (\partial_{h_a} \partial_{h_b} Q_1) (\partial_{h_c} \partial_{h_d} Q_1)$$
or equivalently that $d^4 \binom{R}{2} = \Sym^2( d^2 Q_1 )$. But this latter identity was established in \cite[Example 6.5]{taoziegler} as a consequence of \cite[Lemma 6.4]{taoziegler}. (Alternatively, one can establish the identity  $d^4 \binom{R}{2} = \Sym^2( d^2 Q_1 )$ by several direct applications of the Leibniz rule \eqref{leibniz} using the identity \eqref{fh} for any $F \colon \F_2^n \to \Z/4\Z$ and $h \in \F_2^n$, as well as the identities $R = Q_1 \mod 2$, $d^3 Q_1 = 0$, and $d^4 R = 0$.)
\end{proof}

\begin{proposition} Let $1 \leq r \leq 5$.  Then $X_{5,r}$ is an ergodic $2$-homogeneous $5$-step nilspace, and the projection map $\pi \colon X_{5,r} \to X_2$ given by $\pi(q,s) \coloneqq q$ for $q \in X_2$ and $s \in \frac{1}{2^r}\Z/\Z$ is a nilspace morphism.
\end{proposition}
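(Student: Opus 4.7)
My plan is to recognize $X_{5,r}$ as a standard degree-$5$ cocycle extension of the $2$-step nilspace $X_2$ by the degree-$5$ filtered abelian group ${\mathcal D}^5(\frac{1}{2^r}\Z/\Z)$ twisted by the $5$-cocycle $\rho$. Once this viewpoint is set up, the three nilspace axioms will follow from the corresponding axioms on $X_2$, the cocycle identity for $\rho$, and Lemma~\ref{lift}. For the \emph{composition} axiom, given a discrete-cube morphism $\phi \colon \{0,1\}^m \to \{0,1\}^n$ and $(Q,S) \in C^n(X_{5,r})$, the pair $(Q \circ \phi, S \circ \phi)$ lies in $C^m(X_{5,r})$: indeed $Q \circ \phi \in C^m(X_2)$, and by naturality of cube derivatives and pullbacks, $d^6(S \circ \phi) = \phi^*(d^6 S) = \phi^*(Q^*\rho) = (Q \circ \phi)^*\rho$. \emph{Ergodicity} ($C^1(X_{5,r}) = X_{5,r} \times X_{5,r}$) is immediate, since $X_2$ is ergodic and the cube condition on $S$ is vacuous in dimension at most $5$.

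The key step is the \emph{corner completion} axiom. Given an $(n+1)$-corner in $X_{5,r}$, I would first extend the $Q$-component to some full cube $Q \in C^{n+1}(X_2)$ using the nilspace structure on $X_2$. Then, by Lemma~\ref{lift}, there exists $S' \colon \F_2^{n+1} \to \frac{1}{2^r}\Z/\Z$ with $(Q, S') \in C^{n+1}(X_{5,r})$. Subtracting, the discrepancy between the given $S$-corner and $S'|_{\mathrm{corner}}$ has each of its $n$-faces in $\Poly^5(\F_2^n \to \frac{1}{2^r}\Z/\Z)$ (by the coset statement in Lemma~\ref{lift}). Completing this polynomial-valued corner to a full degree-$5$ polynomial on $\F_2^{n+1}$ is the standard nilspace corner-completion for the filtered abelian group ${\mathcal D}^5(\frac{1}{2^r}\Z/\Z)$, and supplies the value at the missing vertex.

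For the \emph{$5$-step} property, any two cubes in $C^n(X_{5,r})$ with $n \geq 6$ that agree on all $5$-faces must have identical $Q$-components (since $X_2$ is $2$-step) and their $S$-components must differ by an element of $\Poly^5(\F_2^n \to \frac{1}{2^r}\Z/\Z)$ (Lemma~\ref{lift}) which vanishes on all $5$-faces, hence vanishes entirely by the characterization of polynomials via their cube derivatives. The \emph{$2$-homogeneity} combines the $2$-homogeneity of $X_2$ (Proposition~\ref{phom}) with the strong $2$-homogeneity of $\rho$ (Definition~\ref{strong-cocycle}), which is precisely the property guaranteeing that the $\frac{1}{2^r}\Z/\Z$-valued cube structure is internally consistent (rather than only $\T$-valued). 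Finally, the projection $\pi \colon X_{5,r} \to X_2$ sends $(Q,S) \in C^n(X_{5,r})$ to $Q \in C^n(X_2)$ by defining condition (i), so it is a morphism of cube structures.

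The main obstacle is the corner-completion argument, and within it the step of ensuring that the completion actually lives in the subgroup $\frac{1}{2^r}\Z/\Z \subset \T$ rather than in the ambient circle; this is exactly where Theorem~\ref{cocyc-trivial-strong} (via the strong $2$-homogeneity of $\rho$) and Lemma~\ref{lift} do the essential work.
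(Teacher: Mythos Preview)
Your approach matches the paper's, but two points deserve tightening.

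For ergodicity, the condition $d^6 S = Q^*\rho$ is not literally vacuous for $n=1$: it must hold for all $h_1,\dots,h_6\in\F_2$, including repeated shifts. The paper instead observes that it is \emph{automatically satisfied}, and this is exactly where the hypothesis $r\le 5$ enters---it ensures (via Lemma~\ref{explicit-desc}) that every map $\F_2\to\frac{1}{2^r}\Z/\Z$ already lies in $\Poly^5$, so the coset of Lemma~\ref{lift} exhausts all functions on $\F_2$.

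For $2$-homogeneity, the paper does not invoke the strong $2$-homogeneity of $\rho$. It checks directly that for any $(Q,S)\in C^n(X_{5,r})$ and any affine $\phi\colon\F_2^m\to\F_2^n$ (i.e.\ any $m$-cube in ${\mathcal D}^1(\F_2^n)$), the composite $(Q\circ\phi,S\circ\phi)$ again satisfies conditions (i) and (ii)---the same naturality computation you give for the composition axiom. Strong $2$-homogeneity of $\rho$ is what makes Lemma~\ref{lift} (via Theorem~\ref{cocyc-trivial-strong}) produce $\frac{1}{2^r}\Z/\Z$-valued lifts rather than merely $\T$-valued ones, and so it enters only through the corner-completion step, not in the $2$-homogeneity verification itself.
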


\begin{proof}  We begin by verifying the nilspace axioms from Definition \ref{nilspace-def}.  The composition axiom is easy: if $(Q,S) \colon \F_2^n \to X_{5,r}$ is an $n$-cube in $X_{5,r}$ and $\phi \colon \{0,1\}^m \to \{0,1\}^n$ is a cube morphism, then one can view $\phi$ as an affine map from $\F_2^m$ to $\F_2^n$, in which case it is clear that $(Q,S) \circ \phi \colon \F_2^m \to X_{5,r}$ is an $m$-cube in $X_{5,r}$.

Now we verify ergodicity.  Let $(Q,S) \colon \F_2 \to X_{5,r}$ be an arbitrary map.  Then $Q$ is linear, so certainly $d^3 Q = 0$.  Since $r \leq 5$, every map $S \colon \F_2 \to \frac{1}{2^r}\Z/\Z$ lies in $\Poly^5(\F_2 \to \frac{1}{2^r}\Z/\Z)$ by Lemma \ref{explicit-desc}, and hence by Lemma \ref{lift} all pairs $(Q,S)$ lie in $C^1(X_{5,r})$, giving the claim.

Now we verify the corner completion axiom.  Let $(Q,S) \colon \F_2^n \backslash \{1\}^n \to X_{5,r}$ be a map such that the restriction of $(Q,S)$ to any $(n-1)$-face of $\{0,1\}^n \equiv \F_2^n$ containing $0^n$ is in $C^{n-1}(X_{5,r})$.  From the corner completion property of $X_2$, we may extend $Q$ to an $n$-cube $Q \colon \F_2^n \to X_2$, and then by Lemma \ref{lift} we can find a lift $(Q,S') \colon \F_2^n \to X_{5,r}$ which is an $n$-cube.  By \eqref{S-eq}, we conclude that the difference $S-S' \colon \F_2^n \backslash \{1\}^n \to \frac{1}{2^r}\Z/\Z$ is a degree $5$ polynomial on each $(n-1)$-face of $\{0,1\}^n \equiv \F_2^n$ containing $0^n$.  By the corner completion property of ${\mathcal D}^5( \frac{1}{2^r}\Z/\Z )$, we may extend $S-S'$ to a degree $5$ polynomial from $\F_2^n$ to $\frac{1}{2^r}\Z/\Z$; the resulting extension $S \colon \F_2^n \to 
\frac{1}{2^r}\Z/\Z$ then obeys \eqref{S-eq}, so that $(Q,S)$ is now extended to an $n$-cube on $X_{5,r}$ as required.  When $n=6$, the extension of $Q$ is unique, and equation \eqref{S-eq} (with $x=0$ and $h_1,\dots,h_6$ the standard basis) also shows that the extension of $S$ is unique, so that $X_{5,r}$ is $5$-step as claimed.

The nilspace morphism property of $\pi$ is clear from chasing definitions, so it remains to verify $2$-homogeneity.  Let $(Q,S) \colon \F_2^n \to X_{5,r}$ be an $n$-cube in $X_{5,r}$; we need to show that $(Q,S)$ is also a nilspace morphism from ${\mathcal D}^1(\F_2^n)$ to $X_{5,r}$.  But an $m$-cube in ${\mathcal D}^1(\F_2^n)$ can be viewed as an affine map $\phi \colon \F_2^m \to \F_2^n$, and then $(Q,S) \circ \phi \colon \F_2^m \to X_{5,r}$ can then be easily verified to obey the axioms (i), (ii) for an $m$-cube in $X_{5,r}$, and so $(Q,S)$ is a nilspace morphism as claimed.
\end{proof}

\begin{remark}  When $r=1$, one can think of $X_{5,1}$ as the skew product $X_2 \rtimes^{(5)}_\rho \frac{1}{2}\Z/\Z$, in the sense of Proposition \ref{skewprod}, and the fact that $X_{5,1}$ is a $2$-homogeneous nilspace can also be established from Lemma \ref{homog-preserv} and Lemma \ref{lift} in this case.  For larger values of $r$, however, the situation is more complicated; the nilspace $X_{5,r}$ appears at first glance to be a $5$-extension of $X_2$ by $\frac{1}{2^r}\Z/\Z$, but the cube structure is slightly smaller than what would arise from such an extension (the equation \eqref{S-eq} provides more constraints on $S$ than the constraint \eqref{z} used to define a skew product, because the shifts $h_1,\dots,h_6$ are not required to be distinct basis vectors).  Instead, by making the (slightly artificial) identification
$$ (q,s) \equiv ( (q,2s), s - \{2s\}/2 )$$
between $X_2 \times \frac{1}{2^r}\Z/\Z$ and $(X_2 \times \frac{1}{2^{r-1}}\Z/\Z) \times (\frac{1}{2}\Z/\Z)$, where $\{\} \colon \R/\Z \to [0,1)$ denotes the fractional part map, we can identify $X_{5,r}$ with the skew product
$$ (X_2 \times \frac{1}{2^{r-1}}\Z/\Z) \rtimes^{(5)}_{\tilde \rho} \frac{1}{2}\Z/\Z$$
where we give $\frac{1}{2^{r-1}}\Z/\Z$ the $2$-adic filtration $(\frac{1}{2^{r-1}}\Z/\Z)_i = \frac{1}{2^{\min(r-i,0)}}\Z/\Z$ for $i \geq 1$ (so that $X_2 \times \frac{1}{2^{r-1}}\Z/\Z$ is a $\max(2,r-1)$-step filtered abelian group), and $\tilde \rho \colon C^6(X_2 \times \frac{1}{2^{r-1}}\Z/\Z) \to \frac{1}{2}\Z/\Z$ is the modified cocycle
$$ \tilde \rho( (q_\omega,t_\omega)_{\omega \in \{0,1\}^6} ) \coloneqq \rho((q_\omega)_{\omega \in \{0,1\}^6}) - \sum_{\omega \in \{0,1\}^6} (-1)^{|\omega|} \{t_\omega\}/2$$
for all $6$-cubes $(q_\omega,t_\omega)_{\omega \in \{0,1\}^6}$ in $X_2 \times \frac{1}{2^{r-1}}\Z/\Z$ (one can check that $2\tilde \rho=0$, so that this cocycle does indeed take values in $\frac{1}{2}\Z/\Z$).  As we will not need this description of $X_{5,r}$ here, we leave the justification of this claim to the interested reader.
\end{remark}

As an application of the smaller $X_{5,1}$ of the two nilspaces $X_{5,r}$, we have

\begin{proposition}\label{injcont}  There is no injective nilspace morphism from $X_{5,1}$ to a $5$-step compact filtered abelian group.
\end{proposition}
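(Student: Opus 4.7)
The plan is to assume for contradiction the existence of an injective nilspace morphism $\phi\colon X_{5,1}\to A$ into a compact $5$-step filtered abelian group $A$, and to use it to exhibit $\rho$ as a $5$-coboundary in $\T$, contradicting Theorem \ref{nontriv-cocycle}. By Pontryagin duality together with the injectivity of $\phi$, one can pick a character $\chi\colon A\to\T$ such that $P\coloneqq\chi\circ\phi$ satisfies $P(q_0,0)\neq P(q_0,1/2)$ for some $q_0\in X_2$; since $A$ is $5$-step, $P$ is a polynomial on the nilspace $X_{5,1}$ of degree at most $5$. Decompose $P(q,s)=f(q)+\kappa(q)\iota(s)$, where $f(q)\coloneqq P(q,0)$, $\kappa(q)\coloneqq P(q,1/2)-P(q,0)\in\T$, and $\iota\colon\frac{1}{2}\Z/\Z\to\{0,1\}\subset\Z$ is the set-theoretic lift; by construction $\kappa(q_0)\neq 0$.

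The heart of the argument is to show that $\kappa\equiv 1/2$. First, applying the cube identity $\sum_{\omega\in\{0,1\}^6}(-1)^{6-|\omega|}P(Q(\omega),S(\omega))=0$ to $6$-cubes $(Q,S)$ of $X_{5,1}$ with $Q\equiv q$ constant (so $Q^*\rho=0$ and $S$ ranges over polynomials $\F_2^6\to\frac{1}{2}\Z/\Z$ of degree at most $5$) collapses, thanks to $\sum_\omega(-1)^{6-|\omega|}=0$, to $\kappa(q)\cdot\sum_\omega(-1)^{6-|\omega|}\iota(S(\omega))=0$ in $\T$; choosing $S(\omega)=(\omega_1+\omega_6)\omega_2\omega_3\omega_4\omega_5/2$ makes this integer sum equal $\pm 2$, forcing $\kappa(q)\in\frac{1}{2}\Z/\Z$. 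Once $2\kappa(q)=0$ for every $q$, the map $s\mapsto\kappa(q)\iota(s)$ becomes a genuine group homomorphism, so the freedom (Lemma \ref{lift}) to replace $S$ by $S+R$ for any degree-$5$ polynomial $R\colon\F_2^6\to\F_2$ yields $\sum_\omega(-1)^{6-|\omega|}\kappa(Q(\omega))\iota(R(\omega))=0$ for every such $R$ and every quadratic $Q\colon\F_2^6\to X_2$; equivalently, $\sum_\omega\tilde\kappa(Q(\omega))R(\omega)=0$ in $\F_2$, where $\tilde\kappa\colon X_2\to\F_2$ is the image of $\kappa$ under the identification $\frac{1}{2}\Z/\Z\cong\F_2$. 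Writing $\tilde\kappa(q_1,q_2)=a+bq_1+cq_2+dq_1q_2$, the choice $Q=(\omega_1\omega_2,0)$ with $R=\omega_3\omega_4\omega_5\omega_6$ collapses the sum to $b$ (since the only degree-$6$ squarefree monomial that survives is $b\omega_1\cdots\omega_6$), its $Q_1\leftrightarrow Q_2$ swap collapses it to $c$, and $Q=(\omega_1\omega_2,\omega_3\omega_4)$ with $R=\omega_5\omega_6$ collapses it to $d$; hence $b=c=d=0$, $\tilde\kappa$ is constant, and since $\kappa(q_0)\neq 0$ we conclude $\kappa\equiv 1/2$.

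With $\kappa\equiv 1/2$, the decomposition becomes $P(q,s)=f(q)+s$, so the cube identity simplifies to $\sum_\omega(-1)^{6-|\omega|}f(Q(\omega))=-\sum_\omega(-1)^{6-|\omega|}S(\omega)=-\rho((Q(\omega))_\omega)$ for every $6$-cube $(Q,S)$ of $X_{5,1}$. Since every $6$-cube $c\in C^6(X_2)$ is realised as $(Q(\omega))_{\omega\in\{0,1\}^6}$ for some quadratic $Q\colon\F_2^6\to X_2$ via the parameterization \eqref{6-cube}, this gives $d^6 f(c)=-\rho(c)$ for every $6$-cube $c$ of $X_2$, exhibiting $\rho=-d^6 f$ as a $5$-coboundary in $\T$ and contradicting Theorem \ref{nontriv-cocycle}. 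The main obstacle will be the constancy step, which requires carefully handling the non-additivity of $\iota$ before the $2$-torsion of $\kappa$ is available and then tracking the coefficient of the unique degree-$6$ squarefree monomial in each product $\tilde\kappa(Q)R$.
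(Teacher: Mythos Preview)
Your proof is correct and reaches the same contradiction as the paper, but by a genuinely different route.

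The paper selects the character $\xi$ via a signed measure on $A$ so that $e(P)$ has nonzero inner product with $(q,s)\mapsto e(s)$; it then shows the vertical derivative $\partial_u P(q,s)=P(q,s+\tfrac12)-P(q,s)$ is constant in one stroke, using a single $6$-cube of the form $(Q,S)(\omega)=(q_{\omega_1},\,s_{\omega_1}+1_{\omega_2=\dots=\omega_6=0}\cdot\tfrac12)$ connecting two arbitrary points, and finally invokes unitarity of the vertical Koopman operator to pin the constant to $\tfrac12$. By contrast, you pick $\chi$ by plain point-separation, then establish $\kappa\equiv\tfrac12$ in two stages: first $2\kappa=0$ via a constant-$Q$ cube, and then constancy of $\tilde\kappa$ by reading off the top squarefree monomial in $\tilde\kappa(Q)\,R$ for a few well-chosen pairs $(Q,R)$. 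Your approach is more elementary (no measures, no Koopman eigenfunction argument) but more computational; the paper's is shorter and more conceptual, at the cost of introducing the measure-theoretic apparatus. Both arguments culminate identically: once $P(q,s)=f(q)+s$, the $6$-cube identity together with $d^6S=Q^*\rho$ and the $2$-homogeneity identification $C^6(X_2)\equiv\Hom_\Box(\F_2^6,X_2)$ forces $\rho=d^6(-f)$, contradicting Theorem~\ref{nontriv-cocycle}.
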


This gives a negative answer (in the case $p=2,k=5$) to \cite[Question 5.18]{CGSS}, which asked the more general question of whether every $k$-step compact $p$-homogeneous nilspace has an injective nilspace morphism into a $k$-step compact filtered abelian group.  
As noted in that paper, an affirmative answer to this question for a given value of $p$ and $k$ would imply an affirmative answer to Conjecture \ref{btz-conj} (and hence Conjecture \ref{inv-conj-strong} and Conjecture \ref{inv-conj}) for those values of $p,k$.  Indeed, \cite[Question 5.18]{CGSS} was answered affirmatively for $k \leq p+1$, leading to the corresponding results on Conjectures \ref{btz-conj}, \ref{inv-conj-strong}, \ref{inv-conj} mentioned in the introduction.  Thus, Proposition \ref{injcont} can be viewed as a weaker version of Theorem \ref{counter}.

\begin{proof}  Suppose for contradiction that there was an injective nilspace morphism $\iota \colon X_{5,1} \to G$ from $X_{5,1}$ to some $5$-step compact filtered abelian group $G$.  Let $\mu$ be the finite measure on $G$ defined via Riesz representation as
$$ \int_G f\ d\mu \coloneqq \sum_{(q,s) \in X_{5,1}} f(\iota(q,s)) e(s).$$
This is a non-trivial measure, hence must have a non-zero Fourier coefficient.  In other words, there exists a continuous homomorphism $\xi \colon G \to \T$ such that
$$ \sum_{(q,s) \in X_{5,1}} e(s - P(q,s)) \neq 0$$
where $P \colon X_{5,1} \to \T$ is the map $P \coloneqq \xi \circ \iota$.  By Lemma \ref{k-suffices}, $\xi$ is a quintic polynomial  on the $5$-step filtered abelian group $G$, hence $P$ is a quintic polynomial on $X_{5,1}$.

Now we consider the ``vertical derivative''
$$ \partial_u P(q,s) \coloneqq P(q,s+\frac{1}{2}) - P(q,s)$$
of the polynomial $P$.  We claim that this derivative is constant.  If $(q_0,s_0), (q_1,s_1) \in X_{5,1}$, then the tuple $(Q,S) \colon \F_2^6 \to X_{5,1}$ defined by
$$ (Q,S)(\omega) \coloneqq \left(q_{\omega_1}, s_{\omega_1} + 1_{\omega_2=\dots=\omega_{6}=0} \frac{1}{2}\right)$$
can be verified to obey the axioms (i), (ii) required to be a $6$-cube in $X_{5,1}$ by checking the conditions in the description \eqref{eq-5cubes}.  From the quintic nature of $P$ we conclude that
$$ \sum_{\omega \in \{0,1\}^6} (-1)^{|\omega|} P( (Q,S)(\omega) ) = 0$$
which simplifies to
\begin{equation}\label{pup}
\partial_u P(q_0,s_0) = \partial_u P(q_1,s_1),
\end{equation}
giving the claim.

Another way of phrasing this is that the function $e(P)$ is an eigenfunction of the vertical Koopman operator $V^u$ defined by
$$ V^u F(q,s) \coloneqq F\left(q,s+\frac{1}{2}\right).$$
On the other hand, the function $(q,s) \mapsto e(s)$ is also an eigenfunction of this operator with eigenvalue $e(\frac{1}{2})$.  Since the Koopman operator $V^u$ is unitary, therefore its eigenvectors with distinct eigenvalues are orthogonal, and $e(P)$ has a non-zero inner product with $e(s)$, the eigenvalue of $e(P)$ must also be $e(\frac{1}{2})$, thus 
$$ \partial_u P = \frac{1}{2}.$$
Equivalently, we may write
$$ P(q,s) = s - F(q)$$
for some function $F \colon X_2 \to \T$.  Applying $d^6$ to eliminate the quintic polynomial $P$, we conclude that
$$ 0 = \rho - d^6 F$$
and hence $\rho$ is a $5$-coboundary (in $\T$), contradicting Theorem \ref{nontriv-cocycle}.
\end{proof}

\begin{remark}\label{rom} While the above proposition shows that $X_{5,1}$ cannot be embedded into a finite filtered abelian group, \cite[Theorem 1.7]{CGSS} does show that there is a \emph{fibration} $\pi \colon Y \to X_{5,1}$ (as defined in \cite[Definition 7.1]{gmv}, \cite[Definition 3.3.7]{candela0}) such that $Y$ has the structure of a finite filtered abelian group and $\pi$ has good lifting properties; this result was in particular used in \cite{CGSS} to give an alternate proof of Conjecture \ref{inv-conj} in both high and low characteristic.  In fact, we can explicitly give such an extension.  Let $G$ denote the abelian group $\Z/4\Z$ with the degree $3$ filtration
$$ G_0=G_1=G_2=G; \quad G_3 = 2\Z/4\Z; \quad G_i = \{0\} \forall i>3,$$
and consider the filtered abelian group
$$ Y \coloneqq G \times {\mathcal D}^2(\F_2) \times {\mathcal D}^5(\F_2).$$
One can show using Lemma \ref{expl} that the map $\pi \colon Y \to X_{5,1}$ defined by
$$ \phi( a, b, c ) = \left(a \mod 2, b, \frac{\binom{a}{2} b + c}{2}\right),$$
is a fibration; we leave the details to the interested reader.
\end{remark} 

\section{Counterexample to the strong inverse conjecture}

We now use the larger nilspace $X_{5,5}$ introduced in the previous section to establish Theorem \ref{counter}.  (The reason for using $X_{5,5}$ instead of $X_{5,1}$ will only be apparent near the end of the argument.)

\subsection{Constructing the counterexample}

To locate the counterexample to Conjecture \ref{inv-conj-strong} (for a suitable choice of parameters), we use a probabilistic construction.  Let $n$ be a large parameter (which will eventually be sent to infinity).   We let $(Q,S) \colon \F_2^n \to X_{5,5}$ be an $n$-cube in $X_{5,5}$, chosen uniformly at random from $C^n(X_{5,5})$.  In view of Lemma \ref{lift}, one way to generate such an element is as follows.  First, one generates an $n$-cube $Q \colon \F_2^n \to X_2$ of $X_2$, uniformly at random; in other words, $Q$ is a pair $(Q_1,Q_2)$ of independent classical quadratic polynomials $Q_1,Q_2 \colon \F_2^n \to \F_2$.  By Lemma \ref{lift}, the set of all $S \colon \F_2^n \to \frac{1}{2^5}\Z/\Z$ for which $(Q,S)$ is an $n$-cube in $X_{5,5}$ is a coset (depending on $Q$) of the finite group $\Poly^5(\F_2^n \to \frac{1}{2^5}\Z/\Z)$, and so once $Q$ is chosen, one simply selects an element of this coset uniformly at random, or equivalently one chooses uniformly at random a solution $S \colon \F_2^n \to \frac{1}{2^5}\Z/\Z$ to the equation \eqref{S-eq}.  This gives a uniformly distributed element on the entirety of $C^n(X_{5,5})$, a product of two uniform distributions, because all cosets of $\Poly^5(\F_2^n \to \frac{1}{2^5}\Z/\Z)$ have the same cardinality.

\begin{remark}\label{alt-form} Thanks to Lemma \ref{expl}, we can also generate $(Q,S)$ as 
$$(Q,S) = \left((2R, Q^{(2)}), \frac{\binom{R}{2} Q^{(2)}}{2} + P\right),$$
where $R, Q^{(2)}, P$ are elements of $\Poly^3(\F_2^n \to \Z/4\Z)$, $\Poly^2(\F_2^n \to \F_2)$, and $\Poly^5(\F_2^n \to \frac{1}{2^5} \Z/\Z)$ respectively, chosen uniformly and independently at random; compare with Remark \ref{rom}.  However, we will not make significant use of this representation here.
\end{remark}

The random function $f = e(S)$ will be used as our counterexample (or more precisely, as a sequence of counterexamples as $n \to \infty$) to Conjecture \ref{inv-conj-strong}.  We first record a deterministic lower bound on the $U^6$ norm of $e(S)$:

\begin{lemma}[Deterministic lack of Gowers uniformity]\label{gowers}  Whenever $(Q,S) \colon \F_2^n \to X_{5,5}$ is an $n$-cube in $X_{5,5}$, we have
$$ \|e(S)\|_{U^6(\F_2^n)} \geq \eta$$
for some absolute constant $\eta>0$ (independent of $n$).
\end{lemma}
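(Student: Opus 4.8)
The plan is to expand the Gowers norm, use the explicit description of $S$ from Lemma~\ref{expl} to rewrite $e(S)$ as a bounded linear combination of polynomial phases of degree at most $5$, and then conclude via the easy direction of the inverse theorem (correlation with a quintic phase forces a large $U^6$ norm).

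First I would apply Lemma~\ref{expl} with $r=5$: writing $Q=(Q_1,Q_2)$ and fixing a cubic polynomial $R\in\Poly^3(\F_2^n\to\Z/4\Z)$ with $R=Q_1 \bmod 2$, we may write $S=\frac{\binom{R}{2}Q_2}{2}+P_0$ for some $P_0\in\Poly^5(\F_2^n\to\frac{1}{2^5}\Z/\Z)$. Put $A\coloneqq\binom{R}{2}\colon\F_2^n\to\F_2$ and $B\coloneqq Q_2$. The identity $d^4\binom{R}{2}=\Sym^2(d^2Q_1)$ established inside the proof of Lemma~\ref{expl} shows that $d^4A$ does not depend on the base point, hence $d^5A=0$ and $A\in\Poly^4(\F_2^n\to\F_2)$; consequently $A/2$, $B/2$ and $(A+B)/2$ all lie in $\Poly^5(\F_2^n\to\T)$. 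Now apply the elementary Fourier identity on $\F_2^2$,
$$(-1)^{ab}=\tfrac12\bigl(1+(-1)^{a}+(-1)^{b}-(-1)^{a+b}\bigr)\qquad(a,b\in\F_2),$$
pointwise with $a=A(x)$, $b=B(x)$. Since $e\!\left(\frac{AB}{2}\right)=(-1)^{AB}$, this yields
$$e(S)=(-1)^{AB}e(P_0)=\tfrac12\sum_{j=1}^{4}c_j\,e(\phi_j+P_0),$$
where $(c_j)=(1,1,1,-1)$ and $(\phi_j)=(0,\ A/2,\ B/2,\ (A+B)/2)$, and where each $\phi_j+P_0$ again lies in $\Poly^5(\F_2^n\to\T)$.

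Next I would extract a correlation. Because $|e(S)|\equiv 1$,
$$1=\E_{x\in\F_2^n}e(S(x))\overline{e(S(x))}=\tfrac12\sum_{j=1}^{4}c_j\,\E_{x\in\F_2^n}e(S(x))\overline{e\bigl(P_0(x)+\phi_j(x)\bigr)},$$
so by the triangle inequality there is some $j$ with $\bigl|\E_{x}e(S(x))\overline{e(P(x))}\bigr|\ge\frac12$, where $P\coloneqq P_0+\phi_j\in\Poly^5(\F_2^n\to\T)$. Finally, multiplication by the degree $\le 5$ phase $e(-P)$ leaves the $U^6(\F_2^n)$ norm unchanged (pull $e(-P)$ out of the Gowers inner product and use $d^6P=0$), and the Gowers norms are monotone, so
$$\|e(S)\|_{U^6(\F_2^n)}=\bigl\|e(S)\overline{e(P)}\bigr\|_{U^6(\F_2^n)}\ge\bigl\|e(S)\overline{e(P)}\bigr\|_{U^1(\F_2^n)}=\bigl|\E_{x}e(S(x))\overline{e(P(x))}\bigr|\ge\tfrac12,$$
giving the claim with $\eta=\frac12$.

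I do not expect a genuine obstacle: the argument is in essence Fourier analysis on $\F_2^2$ combined with the easy half of the inverse theorem. The two points I would be careful to spell out are (i) that $\binom{R}{2}$ is a genuine polynomial of degree at most $4$, so that the auxiliary phases $e(\phi_j)$ have degree strictly below $6$ (this is immediate from the derivative computation already carried out in the proof of Lemma~\ref{expl}), and (ii) that multiplying by a polynomial phase of degree at most $5$ preserves the $U^6$ norm, which is immediate from the definition of the Gowers norm. One could instead run the decomposition of $(-1)^{AB}$ directly on the exponent in $\|e(S)\|_{U^6(\F_2^n)}^{2^6}=\E_{x,\vec h}\,e\bigl((d^6S)_{h_1,\dots,h_6}(x)\bigr)$ — using Lemma~\ref{lift} to note $d^6S=d^6\!\left(\frac{\binom{R}{2}Q_2}{2}\right)$ — but the correlation argument above is the cleanest route.
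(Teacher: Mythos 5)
Your proof is correct, but it takes a genuinely different route from the paper's. The paper never invokes the explicit formula of Lemma \ref{expl}: it works directly from the defining relation $d^6 S = Q^*\rho$ of \eqref{S-eq}, Fourier-expands $e(-\rho)$ over the finite abelian group $X_2^{\{0,1\}^6}$, pigeonholes to extract a single character term, and then applies the Gowers--Cauchy--Schwarz inequality to get $\|e(S)(-1)^{c_{0^6}\cdot Q}\|_{U^6(\F_2^n)} \geq \eta$, finishing by noting that the quadratic phase does not affect the $U^6$ norm. That argument is more robust --- it uses only that $d^6 S$ is a bounded function of a low-degree polynomial $Q$ evaluated at the cube vertices, and would survive any change in the specific cocycle $\rho$ --- but it yields an unspecified (and in practice tiny) constant $\eta$ coming from pigeonholing over the $4^{64}$ characters of $X_2^{\{0,1\}^6}$. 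Your argument instead exploits the explicit description $S = \frac{\binom{R}{2}Q_2}{2} + P_0$ from Lemma \ref{expl}, runs the Fourier expansion pointwise on $\F_2^2$ via $(-1)^{ab} = \frac12(1+(-1)^a+(-1)^b-(-1)^{a+b})$, and produces an explicit quintic phase correlating with $e(S)$ at level $\frac12$; monotonicity of the Gowers norms then gives the clean constant $\eta = \frac12$. This is essentially a rigorous, deterministic version of the heuristic the authors mention right after the lemma (that the phase $\frac{\binom{R}{2}Q_2}{2}$ vanishes about three quarters of the time). All the steps you flag check out: $\binom{R}{2}$ is indeed a classical polynomial of degree at most $4$ since $d^4\binom{R}{2} = \Sym^2(d^2 Q_1)$ is constant, and multiplication by a degree-$\leq 5$ phase kills the extra factor in the $U^6$ Gowers product since $d^6 P = 0$.
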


Informally, this lemma asserts that $S$ behaves (in some weak statistical sense) like a ``pseudo-quintic'', and indeed Conjecture \ref{inv-conj} could now be invoked to conclude that $e(S)$ correlated with an actual (non-classical) quintic polynomial.  For instance, from Remark \ref{alt-form} we see that with high probability $e(S)$ would correlate with the function $e(P)$, where $P$ is as in that remark, as the phase $\frac{\binom{R}{2} Q^{(2)}}{2}$ will vanish approximately three quarters of the time.  However, we will show that (with high probability) such quintic polynomials cannot be (approximately) constructed out of a bounded number of translates of $S$, leading to the proof of Theorem \ref{counter}.

\begin{proof}  From \eqref{S-eq} we have
$$ \E_{x,h_1,\dots,h_6 \in \F_2^n} e((d^6 S)_{h_1,\dots,h_6}(x) ) e( - \rho( (Q(x+\omega \cdot \vec h))_{\omega \in \{0,1\}^6}) ) = 1$$
where $\vec h \coloneqq (h_1,\dots,h_6)$.
Performing a Fourier expansion of $e(-\rho)$ (which one extends arbitrarily to a function on the finite abelian group $X_2^{\{0,1\}^6}$) and using the pigeonhole principle, we conclude that
$$  \E_{x,h_1,\dots,h_6 \in \F_2^n} e((d^6 S)_{h_1,\dots,h_6}(x) ) (-1)^{\sum_{\omega \in \{0,1\}^6} c_\omega \cdot Q(x+\omega \cdot \vec h)} \geq \eta$$
for some absolute constant $\eta > 0$ and some coordinates of Fourier frequencies $c_\omega \in X_2$ (which may depend on $n$ and $S$), using the usual $\F_2$-valued inner product 
$$ (c_1,c_2) \cdot (q_1,q_2) \coloneqq c_1 q_1 + c_2 q_2$$
on the vector space $X_2$.  Applying the Cauchy--Schwarz--Gowers inequality (see e.g., \cite[(5.5)]{gt-ap}) we conclude that
$$ \| e(S) (-1)^{c_{0^6} \cdot Q} \|_{U^6(\F_2^n)} \geq \eta.$$
As $Q$ is of degree $2$ (less than $5$), multiplication by the quadratic phase $(-1)^{c_{0^6} \cdot Q}$ does not affect the $U^6(\F_2^n)$ norm, and the claim follows.
\end{proof}

Now let $\eps \colon \R^+ \to \R^+$ be an increasing function to be chosen later with $\eps(1/m) \to 0$ sufficiently quickly as $m \to \infty$.  Suppose for contradiction that Conjecture \ref{inv-conj-strong} held for $p=2$ and $k=5$.  Then by the above lemma, applying that conjecture to each of the random functions $e(S)$ and then using the law of total probability, there exists $M$ (depending on $\eps()$, but deterministic and independent of $n$) such that, for any $n$, and with the random $n$-cube $(Q,S) \in C^n(X_{5,5})$ chosen as above, and $\vec h = (h_1,\dots,h_M) \in (\F_2^n)^M$ chosen uniformly at random, with probability at least $1/2$, there exist
$1 \leq m \leq M$, $P \in \Poly^5(\F_2^n)$ and a function $F \colon (\frac{1}{2^5}\Z/\Z)^{\F_2^M} \to \C$ (which may depend on $Q, S, h_1,\dots, h_M$), such that
\begin{equation}\label{prop-1}
|\E_{x \in \F_2^n} e(S(x)-P(x))| \geq \frac{1}{m}
\end{equation}
and
$$
|\E_{x \in \F_2^n} e(P(x)) - F( (S(x+a \cdot \vec h))_{a \in \F_2^M})| \leq \eps(m).
$$
(We drop the Lipschitz condition on $F$ as being of little use due to the finite nature of the domain.)  By projecting $F$ to the unit circle we may assume that $F = e(\Phi)$ for some $\Phi \colon (\frac{1}{2^5}\Z/\Z)^{\F_2^M} \to \T$, thus
\begin{equation}\label{prop-2}
|\E_{x \in \F_2^n} e(P(x)) - e(\Phi( (S(x+a \cdot \vec h))_{a \in \F_2^M}))| \leq \eps(m).
\end{equation}

We have two independent sources of randomness present in the above assertions: one coming from the uniformly chosen $n$-cube $(Q,S)$, and one coming from the uniformly chosen sampling vectors $h_1,\dots,h_M$.  It will be convenient to normalize the $h_1,\dots,h_M$ by the following argument.  By Fubini's theorem, we can choose the sampling vectors $h_1,\dots,h_M \in \F_2^n$ \emph{first}, and then choose the $n$-cube $(Q,S) \in C^n(X_{5,5})$ \emph{second}, and it will still be the case with probability at least $1/2$ that we can find $m, P, \Phi$ obeying \eqref{prop-1}, \eqref{prop-2}.  For $n$ sufficiently large (depending on $M$), the probability that the $h_1,\dots,h_M$ are linearly dependent is less than $1/4$ (say).  Deleting this event and applying the pigeonhole principle for the $h_1,\dots,h_M$, we conclude that for all sufficiently large $n$, we may find linearly independent (and now deterministic) $h_1,\dots,h_M \in \F_2^n$ such that, for a uniformly chosen $n$-cube $(Q,S)$ in $X_{5,5}$, with probability at least $1/4$, there exists a quintic polynomial $P \in \Poly^5(\F_2^n)$, a natural number $1 \leq m \leq M$, and a function $\Phi \colon (\frac{1}{2^5}\Z/\Z)^{\F_2^M} \to \T$, obeying the properties \eqref{prop-1}, \eqref{prop-2}.

The above claim is invariant with respect to general linear transformations on $\F_2^n$ (i.e., changes of coordinate basis), so without loss of generality we may take $h_i = e_i$ for $1 \leq i \leq M$, where $e_1,\dots,e_n$ is the standard basis for $\F_2^n$.  Then we can simplify the tuple $(S(x+a \cdot \vec h))_{a \in \F_2^M}$ as $(S(x+(a,0^{n-M})))_{a \in \F_2^M}$. We summarize the situation so far as follows.

\begin{proposition}[$e(S)$ can be approximated by a measurable quintic]\label{quarter} Suppose that Conjecture \ref{inv-conj-strong} holds for $p=2$ and $k=5$, and let $\eps \colon \N \to \R^+$ be a function decreasing to zero.  Then there exists $M \geq 1$ such that for all sufficiently large $n$, and $(Q,S)$ a uniformly chosen $n$-cube in $X_{5,5}$, one has with probability at least $1/4$ that there exist a quintic polynomial $P \in \Poly^5(\F_2^n)$, $1 \leq m \leq M$ and a function $\Phi \colon (\frac{1}{2^5}\Z/\Z)^{\F_2^M} \to \T$ (which are all permitted to depend on $(Q,S)$) such that
\begin{equation}\label{prop-3}
|\E_{x \in \F_2^n} e(P(x)) - e(\Phi( (S(x+(a,0^{n-M})))_{a \in \F_2^M}))| \leq \eps(m)
\end{equation}
and
\begin{equation}\label{prop-4}
|\E_{x \in \F_2^n} e(S(x)-P(x))| \geq \frac{1}{m}
\end{equation}
where we split $\F_2^n$ as $\F_2^M \times \F_2^{n-M}$ (so that an element $a$ of $\F_2^M$ induces a corresponding element $(a,0^{n-M})$ of $\F_2^n$).
\end{proposition}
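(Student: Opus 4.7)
The proof essentially formalizes the discussion preceding the proposition. My plan is to combine Lemma \ref{gowers} with the hypothesized Conjecture \ref{inv-conj-strong}, then use Fubini and a pigeonholing argument over the sampling vectors $\vec h$, followed by a change of basis to normalize the shifts.

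First, I would fix the function $\eps \colon \N \to \R^+$ as in the statement (with sufficiently rapid decay, to be determined later in the argument where Proposition \ref{quarter} is applied). Lemma \ref{gowers} furnishes an absolute constant $\eta > 0$ such that every realization $(Q,S) \in C^n(X_{5,5})$ satisfies $\|e(S)\|_{U^6(\F_2^n)} \geq \eta$ deterministically. Invoking Conjecture \ref{inv-conj-strong} with parameters $k=5$, $p=2$, $\eta$, and $\eps(\cdot)$ then yields some $M = M(5, 2, \eta, \eps(\cdot))$ such that for each such $(Q,S)$, a uniformly random tuple $\vec h \in (\F_2^n)^M$ produces with probability at least $1/2$ some $1 \leq m \leq M$, $P \in \Poly^5(\F_2^n)$, and a (Lipschitz) function $F \colon ({\mathbb D})^{\F_2^M} \to \C$ satisfying \eqref{prop-1} and \eqref{prop-2}. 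Writing $F = e(\Phi)$ (after projecting onto the unit circle, which only degrades the error by a harmless factor absorbed into $\eps$) converts this into an approximation by $e(\Phi(\ldots))$ as required; the Lipschitz constraint plays no role because the domain $(\frac{1}{2^5}\Z/\Z)^{\F_2^M}$ is finite.

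Second, by Fubini, this guarantee holds with joint probability at least $1/2$ over $((Q,S), \vec h) \in C^n(X_{5,5}) \times (\F_2^n)^M$ drawn uniformly and independently. Let $B$ denote the event that $h_1,\dots,h_M$ are linearly dependent in $\F_2^n$; a standard union bound shows $\P(B) \leq M \cdot 2^{M-n}$, which is less than $1/4$ for all $n$ sufficiently large (depending only on $M$). Therefore the joint event of success together with $\vec h$ being linearly independent has probability at least $1/2 - 1/4 = 1/4$. Averaging over $\vec h$, we obtain a deterministic linearly independent tuple $(h_1,\dots,h_M) \in (\F_2^n)^M$ for which, conditional on this $\vec h$, the probability over the uniformly random $(Q,S) \in C^n(X_{5,5})$ that a valid $m,P,\Phi$ exists is at least $1/4$.

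Finally, I would invoke the $\mathrm{GL}(n,\F_2)$ symmetry to reduce to the case $h_i = e_i$ for $1 \leq i \leq M$. Because $X_{5,5}$ is a nilspace whose cube structure is defined by the axioms (i)-(ii) which are manifestly $\mathrm{GL}(n,\F_2)$-equivariant (precomposing $(Q,S) \colon \F_2^n \to X_{5,5}$ with any linear automorphism of $\F_2^n$ yields another element of $C^n(X_{5,5})$), the uniform distribution on $C^n(X_{5,5})$ is preserved under any change of basis of $\F_2^n$. Extending $(h_1,\dots,h_M)$ to a basis of $\F_2^n$ and applying the corresponding linear automorphism converts $h_i$ into the standard basis vector $e_i$ and converts $(S(x + a \cdot \vec h))_{a \in \F_2^M}$ into $(S(x+(a,0^{n-M})))_{a \in \F_2^M}$ as in the statement. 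Neither \eqref{prop-1} (which only concerns $e(S-P)$ under the transformed variables) nor the quintic degree of $P$ is affected. This yields the desired conclusion.

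There is no substantial obstacle: the proof is entirely bookkeeping assembled from the Gowers-norm lower bound, the assumed strong inverse conjecture, Fubini plus a cheap linear-independence count, and the nilspace symmetry under coordinate changes.
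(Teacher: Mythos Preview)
Your proposal is correct and follows essentially the same route as the paper's own argument: apply Lemma \ref{gowers} to get the deterministic $U^6$ lower bound, invoke the hypothesized Conjecture \ref{inv-conj-strong}, project $F$ to the unit circle, use Fubini together with the linear-independence count to pigeonhole a deterministic linearly independent shift tuple, and then normalize via the $\mathrm{GL}(n,\F_2)$-invariance of the uniform measure on $C^n(X_{5,5})$. The only cosmetic difference is that you make the linear-dependence bound and the equivariance of the cube structure slightly more explicit than the paper does.
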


\subsection{Equidistribution theory for $Q,S$}

In order to extract a contradiction from the estimates \eqref{prop-3}, \eqref{prop-4} and the polynomial nature of $P$, we will need to understand the asymptotic equidistribution properties of the $n$-cube $(Q,S)$ in the following randomly sampled sense.  Given a choice of $n$-cube $(Q,S)$, and a natural number $d$, let $v_1,\dots,v_d \in \F_2^n$ be vectors drawn uniformly and independently from $\F_2^n$ (and also independently of $(Q,S)$), and consider the random functions $(\tilde Q,\tilde S) = (\tilde Q,\tilde S)_{(Q,S),v_1,\dots,v_d} \colon \F_2^{M+d} \to X_{5,5}$ defined by sampling $(Q,S)$ in the directions $e_1,\dots,e_M, v_1,\dots,v_d$, or more precisely by the formula
$$
(\tilde Q,\tilde S)( a_1,\dots,a_M,b_1,\dots,b_d) \coloneqq (Q,S)( a_1 e_1 + \dots + a_M e_M + b_1 v_1 + \dots + b_d v_d )$$
for all $a_1,\dots,a_M,b_1,\dots,b_d \in \F_2$.  This is the composition of the nilspace morphism $(Q,S) \colon {\mathcal D}^1(\F_2^n) \to X_{5,5}$ with a (random) linear transformation from $\F_2^{M+d}$ to $\F_2^n$, and so $(\tilde Q,\tilde S)$ is a (random) nilspace morphism from ${\mathcal D}^1(\F_2^{M+d})$ to $X_{5,5}$, or equivalently a (random) $M+d$-cube in $X_{5,5}$. 
Also, regardless of the choice of sampling vectors $v_1,\dots,v_d$, $(\tilde Q,\tilde S)$ must agree with $(Q,S)$ on $\F_2^M$ in the sense that
\begin{equation}\label{agree}
(\tilde Q,\tilde S)(a, 0^d) = (Q_0,S_0)(a)
\end{equation}
for all $a \in \F_2^M$, where $(Q_0,S_0) \colon \F_2^M \to X_{5,5}$ is the restriction of $(Q,S)$ to $\F_2^M$, defined by the formula
\begin{equation}\label{q0s0}
(Q_0,S_0)(a) \coloneqq (Q,S)(a, 0^{n-M}).
\end{equation}
Note that $(Q_0,S_0)$ is an $M$-cube in $X_{5,5}$, since $(Q,S)$ is an $n$-cube in $X_{5,5}$.

Let 
$$\Sigma^{(d)}_{Q_0,S_0} \subset C^{M+d}(X_{5,5})$$
denote the space of all $M+d$-cubes $(\tilde Q, \tilde S)$ that agree with the $M$-cube $(Q_0,S_0)$ on the face $\F_2^M \times \{0^d\}$ in the sense of \eqref{agree}; this is a non-empty finite set whose cardinality is bounded uniformly in $n$.  For each choice of $(Q,S),d$, let $\mu_{Q,S}^{(d)}$ denote the distribution of the random variable $(\tilde Q,\tilde S)$ generated by the random variables $v_1,\dots,v_d$, thus $\mu_{Q,S}^{(d)}$ is the probability measure on $\Sigma^{(d)}_{Q_0,S_0}$ defined by the formula
$$ \int_{\Sigma^{(d)}_{Q_0,S_0}} G( \tilde Q, \tilde S )\ d\mu_{Q,S}^{(d)}(\tilde Q, \tilde S)
= \E_{v_1,\dots,v_d \in \F_2^n} G( \tilde Q_{Q,v_1,\dots,v_d}, \tilde S_{S,v_1,\dots,v_d} )$$
for any observable $G \colon \Sigma^{(d)}_{Q_0,S_0} \to \C$. Meanwhile, let $\overline{\mu}_{Q_0,S_0}^{(d)}$ denote the uniform probability measure on $\Sigma^{(d)}_{Q_0,S_0}$.  

The following key equidistribution theorem asserts that, for $(Q,S)$ a uniformly chosen $n$-cube, $\mu_{Q,S}^{(d)}$ converges to $\overline{\mu}_{Q_0,S_0}^{(d)}$ ``in probability''.  More precisely:

\begin{theorem}[Equidistribution theorem]\label{equid}  Let $d$ be fixed.  Then, we have
\begin{equation}\label{tv}
 d_{\TV}( \mu_{Q,S}^{(d)}, \overline{\mu}_{Q_0,S_0}^{(d)} ) = o(1)
 \end{equation}
with probability $1-o(1)$, where $o(1)$ denotes any quantity that goes to zero as $n \to \infty$ holding all other parameters not depending on $n$ (such as $d$) fixed.  Here $d_{\TV}$ denotes the total variation distance between probability measures.
\end{theorem}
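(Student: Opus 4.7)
The plan is to reduce the equidistribution claim to a Plancherel-based character sum estimate for independent random polynomial samplings, using the explicit decomposition of Remark \ref{alt-form}.

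I would first use Remark \ref{alt-form}: a uniform random $(Q, S) \in C^n(X_{5,5})$ is parametrized by independent uniform $R \in \Poly^3(\F_2^n \to \Z/4\Z)$, $Q^{(2)} \in \Poly^2(\F_2^n \to \F_2)$, $P \in \Poly^5(\F_2^n \to \frac{1}{2^5}\Z/\Z)$ via $(Q, S) = ((2R, Q^{(2)}), \binom{R}{2} Q^{(2)}/2 + P)$; the fibres of this map have constant size, so it is uniform-to-uniform. Since $T \mapsto T \circ L_v$ commutes with the formula, the sampled $(\tilde Q, \tilde S)$ is determined by the individual samplings $(\tilde R, \tilde Q^{(2)}, \tilde P)$, and the extension space $\Sigma^{(d)}_{Q_0, S_0}$ factors accordingly. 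It thus suffices to prove the following auxiliary claim for each of the three polynomial components: for fixed $k$ and a finite abelian $2$-group $H$, if $T \colon \F_2^n \to H$ is a uniform random polynomial of degree $\le k$, then with probability $1 - o(1)$ over $T$ the distribution of $\tilde T = T \circ L_v$ (under uniform $v \in (\F_2^n)^d$) is within $o(1)$ in TV of uniform on $\Sigma_{T_0} \coloneqq \{P \in \Poly^k(\F_2^{M+d} \to H) : P|_{\F_2^M} = T|_{\F_2^M}\}$.

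The auxiliary claim I would prove by a second-moment Fourier argument. By Markov's inequality it suffices to show $\mathbb{E}_T d_{\TV}(\mu_T^{(d)}, \overline\mu_{T_0})^2 = o(1)$; Plancherel on the translation group $V \coloneqq \{P \in \Poly^k(\F_2^{M+d} \to H) : P|_{\F_2^M} = 0\}$ (whose cardinality is independent of $n$) reduces this to showing $\mathbb{E}_T |\hat\mu_T^{(d)}(\chi)|^2 = o(1)$ for each non-trivial $\chi \in \widehat V$. Expanding the square,
$$\mathbb{E}_T |\hat\mu_T^{(d)}(\chi)|^2 = \mathbb{E}_{v, v'}\, \mathbb{E}_T\, \chi(T \circ L_v - T \circ L_{v'}),$$
and by uniformity of $T$ on the finite abelian group $\Poly^k(\F_2^n \to H)$, the inner expectation is $1$ or $0$ according to whether the pulled-back character $\chi \circ (L_v^* - L_{v'}^*)$ on $\Poly^k(\F_2^n \to H)$ is trivial.

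The remaining task, which I expect to be the main obstacle, is to show that this triviality event has probability $o(1)$ over $(v, v')$. The key linear-algebra claim is that the difference map $L_v^* - L_{v'}^* \colon \Poly^k(\F_2^n \to H) \to V$ is surjective whenever $e_1, \ldots, e_M, v_1, \ldots, v_d, v'_1, \ldots, v'_d$ are linearly independent in $\F_2^n$, an event of probability $1 - O(2^{-(n - M - 2d)}) = 1 - o(1)$. Surjectivity follows by restricting attention to polynomials $P$ supported on the $(M + 2d)$-dimensional subspace $U \coloneqq \langle e_i, v_j, v'_j\rangle$: any $W \in V$ can be interpolated by a compatible pair of degree-$\le k$ polynomials on $L_v(\F_2^{M+d})$ and $L_{v'}(\F_2^{M+d})$ (with values matching on the common $\F_2^M$-face), and such a compatible pair extends to a degree-$\le k$ polynomial on $U$ by standard polynomial interpolation. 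Once surjectivity is known, triviality of $\chi \circ (L_v^* - L_{v'}^*)$ forces triviality of $\chi$, contradicting our choice. The fiddly part is that this argument must be carried out uniformly across the three polynomial cases, including the cubic $R \in \Poly^3(\F_2^n \to \Z/4\Z)$ where the $2$-torsion in the codomain interacts with the $2$-adic filtration on the polynomial space and one must verify the interpolation step with some care.
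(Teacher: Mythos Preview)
Your second-moment Fourier strategy is essentially the same engine the paper uses, but there is a genuine gap in your reduction step. You write that it suffices to prove the auxiliary equidistribution claim \emph{for each} of the three components $R$, $Q^{(2)}$, $P$ separately. This is not enough: the three sampled objects $\tilde R$, $\tilde Q^{(2)}$, $\tilde P$ are all functions of the \emph{same} random tuple $v=(v_1,\dots,v_d)$, so knowing that each marginal $v$-distribution is close to uniform on its $\Sigma_{T_0}$ does not imply that the joint $v$-distribution of the triple is close to the product of uniforms. (Concretely, if by chance two of the components happened to coincide as polynomials, their samples would be perfectly correlated regardless of how uniform each marginal is.) Since the pushforward $(\tilde R,\tilde Q^{(2)},\tilde P)\mapsto(\tilde Q,\tilde S)$ only sends the \emph{product} uniform measure to $\overline\mu^{(d)}_{Q_0,S_0}$, you need joint equidistribution. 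The fix is easy---run your Plancherel/second-moment argument once on the single object $T=(R,Q^{(2)},P)$ taking values in the product group, using independence of the three components to factor the inner expectation $\E_T\chi(\cdots)$---but as written the reduction is incorrect.

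For comparison, the paper avoids the $(R,Q^{(2)},P)$ parametrisation entirely and handles the joint directly by a two-stage argument: first equidistribute $\tilde Q$ in $\Sigma^{(d)}_{Q_0}$ (a coset of a fixed group $H$), then for each fixed value $\tilde Q_*$ equidistribute $\tilde S$ in the fibre, which is a coset of $K=\{P\in\Poly^5(\F_2^{M+d}\to\tfrac{1}{2^5}\Z/\Z):P|_{\F_2^M}=0\}$. Each stage is exactly your character-sum estimate. This sequential conditioning is what replaces your product decomposition, and it sidesteps the shared-$v$ issue automatically.

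One smaller point: your surjectivity argument via interpolation on $U$ is more work than needed, and the $\Z/4\Z$ worry is a red herring. After restricting to the generic event and changing basis so that $v_i=e_{M+i}$, $v'_i=e_{M+d+i}$, the paper simply takes any $W\in V$ and defines $T(x_1,\dots,x_n)\coloneqq W(x_1,\dots,x_{M+d})$; then $T\circ L_v=W$ and $T\circ L_{v'}=0$ since $W|_{\F_2^M}=0$, so $L_v^*T-L_{v'}^*T=W$. This works verbatim for any abelian target $H$, including $\Z/4\Z$, with no interpolation or $2$-adic care required.
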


Informally, this theorem asserts that the condition \eqref{agree} on the $M+d$-cube $(\tilde Q,\tilde S)$ is asymptotically the \emph{only} constraint that could control (or even bias) the distribution of this $M+d$-cube.  One could replace the total variation distance here by any other reasonable metric, since $\mu_{Q,S}^{(d)}$, $\overline{\mu}_{Q_0,S_0}^{(d)}$ are supported on finite sets of cardinality bounded uniformly on $n$.

\begin{proof}  We first establish the equidistribution claim for $Q$ only.  Let $\Sigma^{(d)}_{Q_0} \subset C^{M+d}(X_2)$ be the collection of all $M+d$-cubes $\tilde Q$ in $X_2$ which agree with $Q_0$ on the face $\F_2^M\times \{0^d\}$ in the sense of \eqref{agree}.  We then define $\mu_Q^{(d)}$ as before, and set $\overline{\mu}_{Q_0}^{(d)}$ to be uniform measure on $\Sigma^{(d)}_{Q_0}$.  Observe that the projection map $(\tilde Q,\tilde S) \mapsto \tilde Q$ maps $\Sigma^{(d)}_{Q_0,S_0}$ to $\Sigma^{(d)}_{Q_0}$; by Lemma \ref{lift}, the map is surjective, and the fibers of this map are essentially cosets of the finite group
\begin{equation}\label{k-def}
K \coloneqq \left\{ P \in \Poly^5\left(\F_2^{M+d} \to \frac{1}{2^5}\Z/\Z\right): P(x, 0^d)=0 \forall x \in \F_2^M \right\}.
\end{equation}
In particular, all fibers have the same cardinality, and hence the uniform measure $\overline{\mu}_{Q_0,S_0}^{(d)}$ pushes forward to the uniform measure $\overline{\mu}_{Q_0}^{(d)}$.  Also, by definition the sampling measure $\mu_{Q,S}^{(d)}$ pushes forward to the sampling measure $\mu_Q^{(d)}$.  Hence, in order to establish \eqref{tv} with probability $1-o(1)$, a natural first step would be to first show the weaker claim that
\begin{equation}\label{weak-equi}
d_{\TV}( \mu_{Q}^{(d)}, \overline{\mu}_{Q_0}^{(d)} ) = o(1)
\end{equation}
with probability $1-o(1)$.

We use the second moment method. The set $\Sigma^{(d)}_{Q_0}$ is a (random) coset of the (deterministic) finite group
$$ H \coloneqq \{ P \in \Poly^2(  \F_2^{M+d} \to X_2 ): P(x, 0^d)=0 \forall x \in \F_2^M \}.$$
By the Diaconis--Shahshahani Upper Bound Lemma\footnote{For a probability measure $\mu$ on a finite abelian group $H$ (or any coset thereof), the Diaconis--Shahshahani Upper Bound Lemma is the inequality 
\[
d_{\TV}(\mu, u)^2 \leq \tfrac{1}{4} \sum_{\xi \neq 0}
|\widehat{\mu}(\xi)|^2,
\]
where $u$ denotes the uniform measure and $\widehat{\mu}(\xi)$ are the Fourier coefficients with respect to the characters of $H$.} (cf. \cite[Lemma 1]{diaconis}), it thus suffices to establish the claim
$$ \int_{\Sigma^{(d)}_{Q_0}} e( \xi \cdot (\tilde Q - \tilde Q_*) )\ d\mu_Q^{(d)} = o(1)$$
with probability $1-o(1)$ for any fixed non-trivial character $\xi \colon H \to \T$, where $\tilde Q_* = \tilde Q_*(Q)$ is an arbitrary element of $\Sigma^{(d)}_Q$ (the exact choice is unimportant as it does not affect the magnitude of the left-hand side).  By Chebyshev's inequality, it suffices to show that
$$ \E_{Q,S} \left|\int_{\Sigma^{(d)}_{Q_0}} e( \xi \cdot (\tilde Q - \tilde Q_*) )\ d\mu_Q^{(d)}\right|^2 = o(1).$$
The left-hand side can be rewritten as
$$ \E_{v_1,\dots,v_d,v'_1,\dots,v'_d \in \F_2^n} \E_Q e( \xi \cdot (\tilde Q_{Q,v_1,\dots,v_d} - \tilde Q_{Q,v'_1,\dots,v'_d} ) ).$$
Since $d$ is fixed and $n$ is going to infinity, we see that the vectors $v_1,\dots,v_d$, $v'_1,\dots,v'_d$, $e_1,\dots,e_M$ will be linearly independent with probability $1-o(1)$.  Hence we may restrict to this portion of the average with acceptable error.  Applying a linear change of variables (which does not affect the distribution of the random variable $Q$), we may then normalize $v_i = e_{M+i}$ and $v'_i = e_{M+d+i}$ for $i=1,\dots,d$.  It will thus suffice to show that
$$ \E_Q e( \xi \cdot (\tilde Q_{Q,e_{M+1},\dots,e_{M+d}} - \tilde Q_{Q,e_{M+d+1},\dots,e_{M+2d}} ) ) = o(1).$$
The random variable $Q$ is uniformly distributed over a finite abelian group $\Poly^2(\F_2^n \to X_2 )$, and the expression inside the $e()$ is a homomorphism in $Q$.  Hence by Fourier analysis, the claim follows unless we have the vanishing
\begin{equation}\label{xip-van}
 \xi \cdot (\tilde Q_{Q,e_{M+1},\dots,e_{M+d}} - \tilde Q_{Q,e_{M+d+1},\dots,e_{M+2d}} ) = 0
 \end{equation}
for all quadratic polynomials $Q \colon \F_2^n \to X_2$.  But if we let $P \colon \F_2^{M+d} \to X_2$ be an element of the group $H$ that is not annihilated by $\xi$, then the function $Q \colon \F_2^n \to X_2$ defined by
$$ Q(x_1,\dots,x_n) \coloneqq P(x_1,\dots,x_{M+d})$$
is a quadratic polynomial\footnote{The composition of a homomorphism, here the projection from $\F_2^n$ onto $\F_2^{M+d}$, with a polynomial of degree $d$ is again a polynomial of degree $d$.} for which the left-hand side of \eqref{xip-van} is non-zero.  Thus we have the desired equidistribution \eqref{weak-equi}.

To show full equidistribution, it suffices by the triangle inequality to show, for each element $\tilde Q_*$ of $\Sigma^{(d)}_Q$, that
$$
d_{\TV}( \mu_{Q,S}^{(d)} 1_{\tilde Q = \tilde Q_*}, \overline{\mu}_{Q_0,S_0}^{(d)} 1_{\tilde Q = \tilde Q_*}) = o(1),$$
with probability $1-o(1)$, where $1_{\tilde Q = \tilde Q_*}$ denotes the indicator function to the set $\{ (\tilde Q, \tilde S) \in \Sigma^{(d)}_{Q_0,S_0}: \tilde Q = \tilde Q_*\}$.  Note from \eqref{weak-equi} that with probability $1-o(1)$, both of these measures differ in mass by $o(1)$.  Once one fixes $\tilde Q = \tilde Q_*$, the variable $\tilde S$ ranges in a coset $\tilde S_{(S_0,\tilde Q_*)} + K$ of the finite abelian group $K$ defined in \eqref{k-def}, where we arbitrarily choose one representative $\tilde S_{(S_0,\tilde Q_*)}$ of this coset for each choice of $S_0, \tilde Q_*$.  By Fourier analysis, it thus suffices to show that
$$ \int_{\Sigma^{(d)}_{Q_0,S_0}} e(\xi \cdot (\tilde S - \tilde S_{(S_0,\tilde Q_*)})) 1_{\tilde Q = \tilde Q_*}\ d\mu_{Q,S}^{(d)}(\tilde Q, \tilde S) = o(1)$$
with probability $1-o(1)$ for each non-trivial character $\xi \colon K \to \T$ of $K$.  As before, it suffices by the Chebyshev inequality to show that
$$ \E_{Q,S} |\int_{\Sigma^{(d)}_{Q,S}} e(\xi \cdot (\tilde S - \tilde S_{S_0,\tilde Q_*})) 1_{\tilde Q = \tilde Q_*}\ d\mu_{Q,S}^{(d)}(\tilde Q, \tilde S)|^2 = o(1).$$
The left-hand side can be rewritten as
$$ \E_{v_1,\dots,v_d,v'_1,\dots,v'_d \in \F_2^n} \E_{Q,S} e( \xi \cdot (\tilde S_{S,v_1,\dots,v_d} - \tilde S_{S,v'_1,\dots,v'_d} ) ) 1_{\tilde Q_{Q,v_1,\dots,v_d} = \tilde Q_{Q,v'_1,\dots,v'_d} = \tilde Q_0}.$$
As before we can restrict to the case where $v_1, \dots, v_d$, $v'_1, \dots, v'_d$, $e_1,\dots,e_M$ are linearly independent, and then after a change of basis it suffices to show that
$$ \E_{Q,S} e( \xi \cdot (\tilde S_{S,e_{M+1},\dots,e_{M+d}} - \tilde S_{S,e_{M+d+1},\dots,e_{M+2d}} ) ) 1_{\tilde Q_{Q,e_{M+1},\dots,{M+d}} = \tilde Q_{Q,e_{M+d+1},\dots,e_{M+2d}} = \tilde Q_0} = o(1).$$
Clearly it would suffice to show that
$$ \E_{S} e( \xi \cdot (\tilde S_{S,e_{M+1},\dots,e_{M+d}} - \tilde S_{S,e_{M+d+1},\dots,e_{M+2d}} ) ) = o(1)$$
uniformly over all $Q$ with
$$ \tilde Q_{Q,e_{M+1},\dots,{M+d}} = \tilde Q_{Q,e_{M+d+1},\dots,e_{M+2d}} = \tilde Q_0.$$
For fixed $Q$, $S$ ranges over a coset of $\Poly^5(\F_2^n \to \frac{1}{2^5}\Z/\Z)$ by Lemma \ref{lift},
and the expression inside $e()$ is an (affine) homomorphism of $S$ on this coset.  Thus by Fourier analysis we are done unless the expression
$$ \xi \cdot (\tilde S_{S,e_{M+1},\dots,e_{M+d}} - \tilde S_{S,e_{M+d+1},\dots,e_{M+2d}} ) $$
is constant on this coset, or equivalently that
\begin{equation}\label{xip}
 \xi \cdot (\tilde S_{P,e_{M+1},\dots,e_{M+d}} - \tilde S_{P,e_{M+d+1},\dots,e_{M+2d}} ) = 0
 \end{equation}
for all $P$ in the group $\Poly^5(\F_2^n \to \frac{1}{2^5}\Z/\Z)$.  But if we let $P' \in K$ be an element of $K$ not annihilated by $\xi$, and set
$$ P(x_1,\dots,x_n) \coloneqq P'(x_1,\dots,x_{M+d})$$
then we see that $P$ lies in $\Poly^5(\F_2^n \to \frac{1}{2^5}\Z/\Z)$ and does not obey \eqref{xip}.  This completes the proof of the theorem.
\end{proof}

We conclude

\begin{corollary}[Equidistributed sequence]\label{Equid-cor} 
Suppose that Conjecture \ref{inv-conj-strong} holds for $p=2$ and $k=5$, and let $\eps \colon \N \to \R^+$ be a function decreasing to zero.  Let $M$ be as in Proposition \ref{quarter}.  Then there exist an integer $1 \leq m \leq M$, a function $\Phi \colon (\frac{1}{2^5}\Z/\Z)^{\F_2^M} \to \T$, an $M$-cube $(Q_0,S_0)$ (which also defines $\Sigma^{(d)}_{Q_0,S_0}$ and $\overline{\mu}_{Q_0,S_0}^{(d)}$), and an infinite sequence of dimensions $n \to \infty$ such that the following holds.  For each $n$ in this sequence, there are a deterministic $n$-cube $(Q,S)$ in $X_{5,5}$, with associated $M$-cube $(Q_0,S_0)$, such that one has
\begin{equation}\label{prop-3-alt}
\Big| \E_{x \in \F_2^n} e(P(x)) - e\Big(\Phi\big( (S(x+(a,0^{n-M})))_{a \in \F_2^M}\big)\Big) \Big| \leq 2\eps(m),
\end{equation}
and
\begin{equation}\label{prop-4-alt}
\Big| \E_{x \in \F_2^n} e(S(x)-P(x)) \Big| \geq \frac{1}{m},
\end{equation}
with $\mu_{Q,S}^{(d)}$ converging in total variation norm to $\overline{\mu}_{Q_0,S_0}^{(d)}$ for each $d \geq 0$.
\end{corollary}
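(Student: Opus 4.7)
The plan is to combine Proposition \ref{quarter} with Theorem \ref{equid}, then collapse the random data to a deterministic sequence via a diagonal argument, double pigeonhole on finite parameters, and a compactness extraction for the torus-valued parameter.

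First, for each sufficiently large $n$, Proposition \ref{quarter} provides an event $A_n$ of probability at least $1/4$ on which a triple $(m_n,P_n,\Phi_n)$ satisfying \eqref{prop-3} and \eqref{prop-4} exists; fix a (measurable) selection of such a triple. Meanwhile, Theorem \ref{equid} gives, for each fixed $d$, a sequence $\eta_d(n) \to 0$ such that the event $\{d_{\TV}(\mu_{Q,S}^{(d)}, \overline{\mu}_{Q_0,S_0}^{(d)}) \le \eta_d(n)\}$ has probability $1-o_n(1)$. By a standard diagonal argument I choose $D(n)\to\infty$ slowly enough that
\[
B_n \coloneqq \bigcap_{d \le D(n)} \{d_{\TV}(\mu_{Q,S}^{(d)}, \overline{\mu}_{Q_0,S_0}^{(d)}) \le \eta_d(n)\}
\]
still has probability $1-o(1)$. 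Thus $A_n \cap B_n$ has probability at least $1/8$ for all large $n$, and every $(Q,S)$ in this intersection simultaneously witnesses \eqref{prop-3}, \eqref{prop-4}, and the equidistribution bounds for all $d \le D(n)$.

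Next I pigeonhole on the two finite parameters. Since $m_n$ takes values in $\{1,\dots,M\}$ and the $M$-cube $(Q_0,S_0)$ defined in \eqref{q0s0} lies in the finite set $C^M(X_{5,5})$, for each large $n$ the restriction of $A_n \cap B_n$ to some specific pair $(m^*_n, (Q_0^*,S_0^*)_n)$ has probability at least $\frac{1}{8M|C^M(X_{5,5})|}$. Both parameters range in finite sets not depending on $n$, so by pigeonhole on the sequence of $n$'s I pass to a subsequence along which $m^*_n = m^*$ and $(Q_0^*,S_0^*)_n = (Q_0^*,S_0^*)$ are \emph{constant}, and for each $n$ in the subsequence I select a deterministic $n$-cube $(Q,S) \in C^n(X_{5,5})$ realizing these values together with the two approximation estimates and the equidistribution bounds.

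Finally, the selected $\Phi_n$ takes values in the compact finite-dimensional torus $\T^{(\frac{1}{2^5}\Z/\Z)^{\F_2^M}}$, so by sequential compactness I pass to a further subsequence along which $\Phi_n \to \Phi$ pointwise, hence uniformly since the domain is finite. Replacing $\Phi_n$ by the limit $\Phi$ in \eqref{prop-3} introduces an additional error bounded by $\sup_y |e(\Phi_n(y)) - e(\Phi(y))| = o(1)$, which is at most $\eps(m^*)$ once $n$ is large, yielding \eqref{prop-3-alt}; estimate \eqref{prop-4-alt} is inherited directly from \eqref{prop-4} since $\Phi$ does not appear there; and for each fixed $d \ge 0$ the convergence $d_{\TV}(\mu_{Q,S}^{(d)}, \overline{\mu}_{Q_0^*,S_0^*}^{(d)}) \to 0$ holds because $D(n)$ eventually exceeds $d$. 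The main obstacle is organizational: ensuring that the diagonal choice of $D(n)$ and the three nested subsequence extractions remain compatible with the probability lower bound of $1/8$, but this is routine given the quantitative decay rates provided by Theorem \ref{equid} and the finiteness of $\{1,\dots,M\}$ and $C^M(X_{5,5})$.
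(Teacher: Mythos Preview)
Your proof is correct and follows essentially the same route as the paper's: combine Proposition~\ref{quarter} with Theorem~\ref{equid} via a diagonal argument, then pigeonhole successively on $m$ and on the finite set of $M$-cubes $(Q_0,S_0)$. The only difference is in how you make $\Phi$ independent of $n$: the paper rounds $\Phi$ to the nearest multiple of $\eps(m)/100$ and pigeonholes on the resulting finite set, whereas you use sequential compactness of the finite-dimensional torus $\T^{(\frac{1}{2^5}\Z/\Z)^{\F_2^M}}$ and pass to a limit; both methods yield the same $2\eps(m)$ bound in \eqref{prop-3-alt}, so this is a cosmetic difference.
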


\begin{proof} Applying Proposition \ref{quarter}, Theorem \ref{equid}, and a standard diagonal argument, we obtain along a sequence $n$ going to infinity, an $n$-cube $(Q,S)$ in $X_{5,5}$, an integer $1 \leq m \leq M$, a polynomial $P \in \Poly^5(\F_2^n)$, and a function $\Phi \colon (\frac{1}{2^5}\Z/\Z)^{\F_2^M} \to \T$ obeying \eqref{prop-3}, \eqref{prop-4} such that
$$ d_{\TV}( \mu_{Q,S}^{(d)}, \overline{\mu}_{Q_0,S_0}^{(d)} ) \to 0$$
as $n$ goes to infinity along this sequence, for each $d \geq 0$.   The quantity $m$ currently depends on $n$, but it takes only finitely many values, so by the pigeonhole principle we may pass to a subsequence and assume that $m$ is independent of $n$.  Similarly, the number of possible restrictions $(Q_0,S_0)$ of $(Q,S)$ to $\F_2^M$ is bounded independently of $n$, because $(Q_0,S_0)$ is an $M$-cube in the finite nilspace $X_{5,5}$.  Hence by the pigeonhole principle, we may pass to a further subsequence of $n$ and assume that this restriction $(Q_0,S_0)$ is independent of $n$.  Finally, with $\Phi$, we may round $\Phi$ to the nearest multiple of $\eps(m)/100$ in $[0,1]$, at the cost of worsening \eqref{prop-3} to \eqref{prop-3-alt}.  Now the number of possible $\Phi$ is bounded independently of $n$, so by another application of the pigeonhole principle we can make $\Phi$ independent of $n$, giving the claim.
\end{proof}

The next step is to construct a certain finite nilspace $X_{(Q_0,S_0)}$ associated to the $M$-cube $(Q_0,S_0)$, that can be viewed as an abstraction of the random samples $((Q,S)(x + (a,0^{n-M})))_{a \in \F_2^M}$ of $(Q,S)$ in the limit $n \to \infty$ (somewhat in the spirit of the Furstenberg correspondence principle).  The construction is as follows.  As $X_{5,5}$ is $2$-homogeneous, we see from \eqref{nbox} that we have the equivalence
$$ C^M(X_{5,5}) \equiv \Hom_\Box(\F_2^M, X_{5,5}).$$
By either Remark \ref{cube-nilspace} or \ref{morphism-nilspace}, this space has the structure of a finite $5$-step $2$-homogeneous nilspace (it is easy to see that the two nilspace structures given by these remarks agree).  This space will not be ergodic in general, so the equivalence relation $\sim_0$ on this space introduced in Remark \ref{ergodic-decomposition} can be non-trivial.  The morphism $(Q_0,S_0)$ is a point in $\Hom_\Box(\F_2^M, X_{5,5})$, and we define $X_{(Q_0,S_0)}$ to be the equivalence class of this point:
$$ X_{(Q_0,S_0)} \coloneqq \{ (Q',S') \in C^M(X_{5,5}): (Q',S') \sim_0 (Q_0,S_0) \}.$$
This is then an ergodic finite $5$-step $2$-homogeneous nilspace.

For every $s \geq 0$, we define a map $\pi_s \colon \Sigma^{(1+s)}_{Q_0,S_0} \to C^s(X_{(Q_0,S_0)})$ by the formula
$$ \pi_s( \tilde Q, \tilde S ) \coloneqq ( (\tilde Q, \tilde S)(\cdot,1,\omega) )_{\omega \in \{0,1\}^s}$$
for all $(\tilde Q,\tilde S) \in \Sigma^{(1+s)}_{Q_0,S_0}$; thus $\pi_s(\tilde Q,\tilde S)$ is the tuple formed by restricting $(\tilde Q,\tilde S)$ to the affine subspaces $\F_2^M \times (1,\omega)$ of $\F_2^{M+1+s}$ for $\omega \in \{0,1\}^s$.  Let us first check that $\pi_s( \tilde Q, \tilde S )$ lies in $C^s(X_{(Q_0,S_0)})$ as claimed.  Since $(\tilde Q,\tilde S)$ is a $M+1+s$-cube in $X_{5,5}$, the map
$$ (a,\omega) \mapsto (\tilde Q, \tilde S)(a,1,\omega)$$
is a $M+s$-cube in $X_{5,5}$, and hence the map
$$ \omega \mapsto (a \mapsto (\tilde Q, \tilde S)(a,1,\omega))$$
is an $s$-cube in $C^M(X_{5,5})$.  Applying \eqref{nbox}, the tuple $\pi_s(\tilde Q,\tilde S)$ is thus a $s$-cube in $C^M(X_{5,5})$, and thus lies in a single equivalence class of $\sim_0$.  A similar argument shows that the pair
$$ ( (a \mapsto (\tilde Q,\tilde S)(a,0,0^s)), (a \mapsto (\tilde Q,\tilde S)(a,1,0^s)))$$
is a $1$-cube in $C^M(X_{5,5})$, and so the two elements of this pair are also equivalent by $\sim_0$.  By \eqref{agree}, the first map is $(Q_0,S_0)$, and hence $\pi_s(\tilde Q, \tilde S)$ is an $s$-cube in $X_{(Q_0,S_0)}$ as claimed.

Next, we claim that the map $\pi_s$ is surjective.  Let $((Q'_\omega,S'_\omega))_{\omega \in \{0,1\}^s}$ be an $s$-cube in $X_{(Q_0,S_0)}$.  Our goal is to locate an $M+1+s$-cube $(\tilde Q, \tilde S)$ in $X_{5,5}$ such that
$$ (\tilde Q,\tilde S)(a,0,0^s) = (Q_0,S_0)(a)$$
and
$$ (\tilde Q,\tilde S)(a,1,\omega) = (Q'_\omega,S'_\omega)(a)$$
for all $a \in \F_2^M$ and $\omega \in \{0,1\}^s$.  So $(\tilde Q, \tilde S)$ is already partially specified on the set
\begin{equation}\label{foo}
\F_2^M \times (\{(0,0^s)\} \cup (\{1\} \times \F_2^s)).
\end{equation}
By the construction of $X_{(Q_0,S_0)}$, this partially specified function is known to be an $M+s$-cube on 
\begin{equation}\label{lop}
\F_2^M \times \{1\} \times \F_2^s
\end{equation}
and an $M+1$-cube on
\begin{equation}\label{lop-2}
\F_2^M \times \{0,1\} \times \{0^s\}.
\end{equation}
The claim then follows from a large number of applications of the completion axiom for nilspaces (or by \cite[Lemma 3.1.5]{candela0}, after performing a reflection to move $(0^M,1,0^s)$ to the origin).

Now we claim that all the fibers of $\pi_s$ have the same cardinality.  Observe that if $(\tilde Q, \tilde S), (\tilde Q', \tilde S') \in \Sigma^{(1+s)}_{Q_0,S_0}$ have the same image under $\pi_s$, then $\tilde Q - \tilde Q'$ is an element of $\Poly^2(\F_2^{M+1+s} \to X_2)$ that vanishes on the set \eqref{foo}; and if $\tilde Q = \tilde Q'$, then $\tilde S - \tilde S'$ is an element of $\Poly^5(\F_2^{M+1+s} \to \frac{1}{2^5}\Z/\Z)$ that vanishes on \eqref{foo}.  Conversely, if $(\tilde Q,\tilde S) \in \Sigma^{(1+s)}_{Q_0,S_0}$ and $\tilde S - \tilde S'$
 is an element of $\Poly^5(\F_2^{M+1+s} \to \frac{1}{2^5}\Z/\Z)$ that vanishes on \eqref{foo}, then $(\tilde Q,\tilde S')$ is an element of $\Sigma^{(1+s)}_{Q_0,S_0}$ with the same image as $(\tilde Q,\tilde S)$ under $\pi_s$.  To conclude the claim, it suffices to show that whenever $(\tilde Q,\tilde S) \in \Sigma^{(1+s)}_{Q_0,S_0}$ and $\tilde Q - \tilde Q'$ is an element of $\Poly^2(\F_2^{M+1+s} \to X_2)$ that vanishes on \eqref{foo}, then there exists $(\tilde Q',\tilde S') \in \Sigma^{(1+s)}_{Q_0,S_0}$ with the same image as $(\tilde Q,\tilde S)$ under $\pi_s$.  By Lemma \ref{lift}, we can at least find a function $\tilde S'' \colon \F_2^n \to \frac{1}{2}\Z/\Z$ with $(\tilde Q',\tilde S'')$ an $n$-cube in $X_{5,5}$.  If the $\tilde S''-\tilde S$ vanished on \eqref{foo}, we would be done; but the best that can be said at present is that this function is a polynomial of degree $k$ on \eqref{lop} and on \eqref{lop-2}, again thanks to Lemma \ref{lift}.  Applying the completion axiom (or \cite[Lemma 3.1.5]{candela0}) many times, we can then find $P \in \Poly^5(\F_2^n \to \frac{1}{2^5}\Z/\Z)$ which agrees with $\tilde S''-\tilde S$ on \eqref{foo}; setting $\tilde S' \coloneqq \tilde S'' - P$ gives the claim.

 From the above properties of $\pi_s$ we see that $\pi_s$ pushes forward the uniform probability measure $\overline{\mu}^{(1+s)}_{Q_0,S_0}$
 on $\Sigma^{(1+s)}_{Q_0,S_0}$ to the uniform probability measure on $C^s(X_{(Q_0,S_0)})$.  Combining this with Corollary \ref{Equid-cor}, we conclude

\begin{corollary}[Equidistributed sequence, again]\label{Equid-cor-2} 
Suppose that Conjecture \ref{inv-conj-strong} holds for $p=2$ and $k=5$, and let $\eps \colon \N \to \R^+$ be a function decreasing to zero.  Let $M$ be as in Proposition \ref{quarter}, and let $n, (Q,S), (Q_0,S_0), m, \Phi$ be as in Corollary \ref{Equid-cor}.  If for any $s \geq 0$ we select $x,h_1,\dots,h_s \in \F_2^n$ uniformly and independently at random, then the random tuple
\[
\bigl((a \mapsto (Q,S)((a,0^{n-M}) + x + \sum_{i=1}^s \omega_i h_i))\bigr)_{\omega \in \{0,1\}^s}
\]
converges in distribution to the uniform distribution on $C^s(X_{(Q_0,S_0)})$.
\end{corollary}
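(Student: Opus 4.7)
The plan is to realize the random tuple in the statement as the pushforward under $\pi_s$ of a sample from $\mu_{Q,S}^{(1+s)}$, and then apply Corollary~\ref{Equid-cor} together with the already-established fact that $\pi_s$ pushes $\overline{\mu}_{Q_0,S_0}^{(1+s)}$ to the uniform measure on $C^s(X_{(Q_0,S_0)})$.

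First I would match up the sampling variables by setting $d = 1+s$, $v_1 \coloneqq x$, and $v_{i+1} \coloneqq h_i$ for $1 \leq i \leq s$. Since $x, h_1, \dots, h_s$ are uniform and independent in $\F_2^n$, so is the tuple $(v_1, \dots, v_{1+s})$, which means the associated $(M+1+s)$-cube $(\tilde Q, \tilde S)$ built from these sampling vectors as in the definition of $\mu_{Q,S}^{(1+s)}$ is distributed according to $\mu_{Q,S}^{(1+s)}$. A direct substitution into the definition of $\pi_s$ then gives
$$ \pi_s(\tilde Q, \tilde S) = \Bigl( a \mapsto (Q,S)\bigl((a,0^{n-M}) + x + \sum_{i=1}^s \omega_i h_i\bigr)\Bigr)_{\omega \in \{0,1\}^s}, $$
which is exactly the random tuple in the statement.

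Second, Corollary~\ref{Equid-cor} yields that $\mu_{Q,S}^{(1+s)}$ converges in total variation to $\overline{\mu}_{Q_0,S_0}^{(1+s)}$ as $n \to \infty$ along the chosen subsequence. Since $\pi_s$ is deterministic and independent of $n$ (because $(Q_0,S_0)$ is fixed along the subsequence by Corollary~\ref{Equid-cor}), and since total variation distance is non-increasing under pushforward by a deterministic map, we obtain $(\pi_s)_* \mu_{Q,S}^{(1+s)} \to (\pi_s)_* \overline{\mu}_{Q_0,S_0}^{(1+s)}$ in total variation. The analysis of $\pi_s$ performed in the paragraphs preceding the corollary (surjectivity together with the equal-fiber-cardinality argument via Lemma~\ref{lift} and repeated applications of the corner completion axiom) identifies the latter pushforward as the uniform probability measure on $C^s(X_{(Q_0,S_0)})$, giving the desired convergence in distribution.

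The entire argument is essentially a bookkeeping step, with no new obstacle beyond aligning coordinates of $\F_2^{M+1+s}$ with the sampling data $(x, h_1, \dots, h_s)$ so that $\pi_s$ precisely extracts the tuple in the statement. No further cohomological or nilspace content is needed beyond what has already been packaged into Corollary~\ref{Equid-cor} and the structural properties of $\pi_s$.
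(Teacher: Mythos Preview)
Your proposal is correct and follows essentially the same approach as the paper: identify the random tuple as $\pi_s$ applied to the sampled $(M+1+s)$-cube with $v_1=x$ and $v_{i+1}=h_i$, then invoke Corollary~\ref{Equid-cor} together with the fact that $(\pi_s)_* \overline{\mu}_{Q_0,S_0}^{(1+s)}$ is uniform on $C^s(X_{(Q_0,S_0)})$. The paper's proof is simply a one-line version of your argument.
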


\begin{proof}  
The distribution of the random tuple above coincides with the distribution $\mu^{(s)}_{Q,S}$ of the random cube $(\tilde Q,\tilde S)_{(Q,S),x,h_1,\dots,h_s}$.  By definition of $\pi_s$, this tuple is exactly the image of $\mu^{(s)}_{Q,S}$ under $\pi_s$.  Similarly, the uniform distribution on $C^s(X_{(Q_0,S_0)})$ is the image of $\overline{\mu}^{(s)}_{Q_0,S_0}$ under $\pi_s$.  Since Corollary~\ref{Equid-cor} asserts that $\mu^{(s)}_{Q,S}$ converges in total variation to $\overline{\mu}^{(s)}_{Q_0,S_0}$, applying $\pi_s$ yields the desired convergence in distribution of the tuple to the uniform distribution on $C^s(X_{(Q_0,S_0)})$.
\end{proof}

In the language of \cite{candela-szegedy-inverse}, this corollary asserts that the sampling map
$$ x \mapsto (a \mapsto (Q,S)((a,0^{n-M}) + x))$$
becomes an asymptotically balanced map from ${\mathcal D}^1(\F_2^n)$ to $X_{(Q_0,S_0)}$ as $n$ goes to infinity along the sequence.

\subsection{Concluding the argument}

With the equidistribution theory for the $n$-cube $(Q,S)$ in hand, we can now return to the task of deriving a contradiction. Let the notation be as in Proposition \ref{quarter} and Corollary \ref{Equid-cor-2}.  

The first step is to use Corollary \ref{Equid-cor-2} to transfer the structural conclusions of Proposition \ref{quarter} to the nilspace $X_{(Q_0,S_0)}$, in a form resembling the proof of Proposition \ref{injcont}.  Let $n$ belong to the sequence from Corollary \ref{Equid-cor}, and let $x,h_1,\dots,h_6$ be chosen uniformly and independently at random from $\F_2^n$.  By Corollary \ref{Equid-cor-2}, the random $6$-cube
\begin{equation}\label{random-tuple} 
\bigl((a \mapsto (Q,S)((a,0^{n-M}) + x + \sum_{i=1}^6 \omega_i h_i))\bigr)_{\omega \in \{0,1\}^6}
\end{equation}
converges in distribution to the uniform distribution on $C^6(X_{(Q_0,S_0)})$, while the random element
\begin{equation}\label{qs-0}
a \mapsto (Q,S)((a,0^{n-M}) + x)
\end{equation}
converges in distribution to the uniform distribution on $X_{(Q_0,S_0)}$.

On the other hand, combining \eqref{prop-3-alt} and \eqref{prop-4} gives (for $\eps(m)$ sufficiently small)
\[
\E_{x \in \F_2^n} \bigl|e(S(x))- e\bigl(\Phi((S(x+a))_{a \in \F_2^M})\bigr) \bigr| \;\geq\; \frac{1}{2m}.
\]
Note that the inner quantity depends on $x$ only through the random element \eqref{qs-0} of $X_{(Q_0,S_0)}$: indeed, for $(Q',S')=\eqref{qs-0}$ one has $S'(0)=S(x)$ and $(S(x+a))_{a\in\F_2^M}=S'$, so the expectation above is
\[
\E_{(Q',S')\sim \eqref{qs-0}} \, \bigl|e(S'(0))-e(\Phi(S'))\bigr|.
\]
The function $(Q',S') \mapsto |e(S'(0))-e(\Phi(S'))|$ is bounded and continuous on the compact nilspace $X_{(Q_0,S_0)}$. Since \eqref{qs-0} converges in distribution to the uniform distribution on $X_{(Q_0,S_0)}$ (by Corollary~\ref{Equid-cor-2}), the Portmanteau theorem implies that this expectation converges to its uniform counterpart. Hence
\begin{equation}\label{prop-5}
\Bigl|\E_{(Q',S') \in X_{(Q_0,S_0)}} e(S'(0)-\Phi(S'))\Bigr| \;\geq\; \frac{1}{2m}.
\end{equation}

Similarly, as $P$ is a quintic polynomial, we have
$$ \E_{x \in \F_2^n; \vec h \in (\F_2^n)^6} \left|e\left( \sum_{\omega \in \{0,1\}^6} (-1)^{|\omega|} P( x + \omega \cdot \vec h ) \right) - 1\right| = 0.$$
 Hence by \eqref{prop-3} and many applications of the triangle inequality
$$ \E_{x \in \F_2^n; \vec h \in (\F_2^n)^6} \left|e\left( \sum_{\omega \in \{0,1\}^6} (-1)^{|\omega|} \Phi( (S(x + a + \omega \cdot \vec h ) )_{a \in \F_2^M} \right) - 1\right| = O(\eps(m))$$
where the implied constant in the $O()$ notation is absolute.  Since the random variable \eqref{random-tuple} converges to the uniform distribution on $C^{6}(X_{(Q_0,S_0)})$, we conclude that
$$ \E_{ (Q',S') \in C^6(X_{(Q_0,S_0)})} \left|e\left( \sum_{\omega \in \{0,1\}^6} (-1)^{|\omega|} \Phi( (S'(a,\omega)) )_{a \in \F_2^M}\right ) - 1\right| = O(\eps(m)).$$
Applying Theorem \ref{poly-stable} (and Markov's inequality), we conclude (for $\eps$ sufficiently rapidly decreasing) that there exists a quintic polynomial $\Phi' \in \Poly^5(X_{(Q_0,S_0)})$ such that
$$ \E_{(Q',S') \in X_{(Q_0,S_0)}} |e( \Phi(S') ) - e( \Phi'(Q',S') )| \leq \frac{1}{4m}$$
and hence by \eqref{prop-5} and the triangle inequality
\begin{equation}\label{eqs}
|\E_{(Q',S') \in X_{(Q_0,S_0)}} e(S'(0) - \Phi'(Q',S')) )| \geq \frac{1}{4m}.
\end{equation}
To take advantage of this correlation, we perform vertical differentiation in the $S'$ direction.  Arguing exactly as in the proof of \eqref{pup}, we see that the vertical derivative
$\Phi'(Q',S'+\frac{1}{2}) - \Phi'(Q',S')$ is constant, and thus $e(\Phi')$ is an eigenfunction of the vertical Koopman operator $V^u$ defined by
$$ V^u F(Q',S') \coloneqq F(Q',S'+\frac{1}{2}).$$
As before, $(Q',S') \mapsto e(S'(0))$ is also an eigenfunction of this operator, with eigenvalue $e(\frac{1}{2})$.  From \eqref{eqs}, these two eigenfunctions of this unitary operator are not orthogonal, and hence the eigenvalue of $e(\Phi')$ must also be $e(\frac{1}{2})$.  Thus, if we place an equivalence relation $\sim$ on $X_{(Q_0,S_0)}$ by declaring $(Q',S') \sim (Q'',S'')$ if $Q'=Q''$ and $S''$ is equal to either $S'$ or $S'+\frac{1}{2}$, then the function
$$ (Q',S') \mapsto S'(0) - \Phi'(Q',S')$$
is invariant with respect to this equivalence and thus can be viewed as a function on the quotient space $X_{(Q_0,S_0)}/\sim$.  In order to exploit this invariance to contradict Theorem \ref{nontriv-cocycle}, we will need to build a ``lifting map'' from $X_2$ to $X_{(Q_0,S_0)}/\sim$
that assigns to each $q \in X_2$ a certain element $(Q^*_q,S^*_q)$ of $X_{(Q_0,S_0)}$ (defined up to the equivalence $\sim$) that has good properties.  More precisely, we will show:

\begin{lemma}[Existence of lift]\label{exist}  One can assign an element $(Q^*_q,S^*_q)$ of $X_{(Q_0,S_0)}$ to each $q \in X_2$ with the following properties:
\begin{itemize}
    \item (Lift)  For each $q \in X_2$, one has $Q^*_q(0) = q$.
    \item (Morphism up to equivalence) For any $6$-cube $(q_\omega)_{\omega \in \{0,1\}^6} \in C^6(X_2)$ in $X_2$, there exists a $6$-cube $((Q'_\omega,S'_\omega))_{\omega \in \{0,1\}^6} \in C^6(X_{(Q_0,S_0)})$ in $X_{(Q_0,S_0)}$ such that $(Q'_\omega,S'_\omega) \sim (Q^*_{q_\omega}, S^*_{q_\omega})$ for all $\omega \in \{0,1\}^6$.
\end{itemize}
\end{lemma}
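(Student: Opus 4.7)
My plan is to construct $(Q^*_q, S^*_q)$ as the $b=1$ restriction of a suitable $(M+1)$-cube in $X_{5,5}$, and then to obtain the desired 6-cube in $X_{(Q_0,S_0)}$ as the restriction of a single $(M+6)$-cube in $X_{5,5}$ that simultaneously carries the data of $(q_\omega)_\omega$ and of the base $(Q_0,S_0)$.

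Setting $q_0 \coloneqq Q_0(0)$, I define the $(M+1)$-cube $\tilde Q_q \colon \F_2^{M+1} \to X_2$ by $\tilde Q_q(a,b) \coloneqq Q_0(a) + b(q-q_0)$, which is quadratic in $(a,b)$ and hence lies in $C^{M+1}(X_2)$. By Lemma \ref{lift}, there is a lift $\tilde S_q \colon \F_2^{M+1} \to \frac{1}{2^5}\Z/\Z$ making $(\tilde Q_q, \tilde S_q)$ an $(M+1)$-cube in $X_{5,5}$, and the $\Poly^5(\F_2^{M+1} \to \frac{1}{2^5}\Z/\Z)$-freedom provided by that lemma lets me normalize so that $\tilde S_q(\cdot, 0) = S_0$ (the discrepancy $\tilde S_q(\cdot,0) - S_0$ is in $\Poly^5(\F_2^M)$, and any such polynomial extends to $\F_2^{M+1}$). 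Set $(Q^*_q, S^*_q) \coloneqq (\tilde Q_q(\cdot, 1), \tilde S_q(\cdot, 1))$. Then $Q^*_q(0) = q_0 + (q-q_0) = q$, and the $(M+1)$-cube $(\tilde Q_q, \tilde S_q)$ doubles as a 1-cube in the morphism nilspace $C^M(X_{5,5})$ from $(Q_0, S_0)$ to $(Q^*_q, S^*_q)$, placing the latter in the $\sim_0$-component $X_{(Q_0, S_0)}$.

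For the morphism property, given $(q_\omega)_\omega \in C^6(X_2)$, I form the $(M+6)$-cube $\bar Q \colon \F_2^{M+6} \to X_2$, $(a, \omega) \mapsto Q_0(a) + q_\omega - q_0$, which is quadratic (the sum of a quadratic in $a$ and the quadratic cube $\omega \mapsto q_\omega$ in $X_2 = {\mathcal D}^2(\F_2^2)$). By Lemma \ref{lift} I lift to $\bar S$ with $(\bar Q, \bar S) \in C^{M+6}(X_{5,5})$. Under the $2$-homogeneity identification $C^{M+6}(X_{5,5}) \cong C^6(C^M(X_{5,5}))$, this $(M+6)$-cube is the same as a 6-cube in $C^M(X_{5,5})$ whose $\omega$-vertex has $Q$-component equal to $Q^*_{q_\omega}$; since the $\omega=0^6$ vertex agrees with $(Q_0, S_0)$, the whole 6-cube lies in $X_{(Q_0, S_0)}$. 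It remains to adjust $\bar S$ so that every slice $\bar S(\cdot, \omega)$ is $S^*_{q_\omega}$ or $S^*_{q_\omega} + \tfrac{1}{2}$.

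The main obstacle is this last slice-matching problem: a priori $\bar S(\cdot,\omega) - S^*_{q_\omega}$ only lies in $\Poly^5(\F_2^M \to \frac{1}{2^5}\Z/\Z)$, so everything beyond an additive constant in $\{0, \tfrac{1}{2}\}$ must be absorbed into a global correction $\tilde P \in \Poly^5(\F_2^{M+6} \to \frac{1}{2^5}\Z/\Z)$. I plan to handle this using the explicit description in Lemma \ref{expl}: fix a cubic $R_0 \in \Poly^3(\F_2^M \to \Z/4\Z)$ with $R_0 \bmod 2 = Q_0^{(1)}$, write $S_0 = \frac{\binom{R_0}{2} Q_0^{(2)}}{2} + P_0$, and express both $S^*_q$ and $\bar S$ through parallel formulas built from a cubic lift $\bar R \in \Poly^3(\F_2^{M+6} \to \Z/4\Z)$ of $\bar Q^{(1)}$ whose restriction to $\omega = 0^6$ equals $R_0$. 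The residual discrepancy is then controlled by $2$-torsion terms $2\delta_\omega \coloneqq \bar R(\cdot, \omega) - R^*_{q_\omega}$ with $\delta_\omega \colon \F_2^M \to \F_2$, and the identity $\binom{R+2\delta}{2} = \binom{R}{2} + \delta \pmod 2$ together with the strong $2$-homogeneity of $\rho$ (Definition \ref{strong-cocycle}) forces these discrepancies to contribute only to the $\frac{1}{2}\Z/\Z$-part of the slice difference. The remaining matching is then a polynomial extension problem solvable by the completion axiom of the nilspace ${\mathcal D}^5(\frac{1}{2^5}\Z/\Z)$, and careful bookkeeping shows that the surviving constants $c_\omega$ are automatically in $\{0, \tfrac{1}{2}\}$, giving the desired $\sim$-equivalence.
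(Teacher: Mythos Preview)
Your high-level architecture matches the paper's: build $(M+1)$- and $(M+6)$-cubes in $X_{5,5}$ over the prescribed $Q$-data and read off the vertices.  But the slice-matching step, which you correctly flag as the main obstacle, is not actually closed.  The normalization $\tilde S_q(\cdot,0)=S_0$ leaves $S^*_q=\tilde S_q(\cdot,1)$ ambiguous by the restriction to $b=1$ of any $P\in\Poly^5(\F_2^{M+1}\to\frac{1}{2^5}\Z/\Z)$ vanishing on $b=0$; this is a large group, not $\{0,\tfrac12\}$.  Your proposed rigidification via Lemma~\ref{expl} then runs into a concrete obstruction: a per-$q$ square root $R^*_q(a)=R_0(a)+[q^{(1)}-q_0^{(1)}]$ uses a \emph{fixed} section $[\cdot]\colon\F_2\to\Z/4\Z$, and for a genuinely quadratic cube $(q_\omega)$ the map $\omega\mapsto[q_\omega^{(1)}]$ is \emph{not} cubic in $\omega$ (e.g.\ for $q_\omega^{(1)}=\omega_1\omega_2+\omega_3\omega_4$ one picks up a $-2\omega_1\omega_2\omega_3\omega_4$ term in $\Z/4\Z$).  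Hence $\omega\mapsto R^*_{q_\omega}$, and therefore $\delta_\omega$ and the error $\tfrac12\delta_\omega(Q^*_{q_\omega})^{(2)}$, are not jointly polynomial in $(a,\omega)$, so they cannot be absorbed into a single correction $\tilde P\in\Poly^5(\F_2^{M+6})$.  The appeal to the completion axiom of ${\mathcal D}^5(\frac{1}{2^5}\Z/\Z)$ does not help here, since completion produces polynomials compatible on faces, whereas your target family $(S^*_{q_\omega})_\omega$ is not a face-compatible system to begin with.

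The paper handles exactly this point by replacing the attempt at an explicit formula with a \emph{uniqueness} statement: Proposition~\ref{solving} imposes, beyond your condition (1) on $Q'_\omega$, two further constraints --- condition~(2) fixing all mixed derivatives $\partial_{e_{i_1}}\cdots\partial_{e_{i_l}}S'_\omega(0)$ for $1\le l\le k-1$ in terms of the $\psi$ coming from strong $2$-homogeneity, and condition~(3) fixing $2S'_\omega(0)=2S_0(0)$ --- and proves that any $d$-cube in $X_{(Q_0,S_0)}$ meeting (1)--(3) is unique up to $\sim$.  Existence for every $d$ is then established by an inductive construction (this is where $\rho=d^5\psi$ and the passage from $X_{5,1}$ to $X_{5,5}$ are actually used).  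With uniqueness in hand, the $d=0$ solution gives $(Q^*_q,S^*_q)$ and the $d=6$ solution automatically has each vertex $\sim(Q^*_{q_\omega},S^*_{q_\omega})$, with no slice-matching required.  The missing idea in your argument is this rigidifying set of constraints and its uniqueness clause; without it the ``careful bookkeeping'' cannot force the residuals down to $\{0,\tfrac12\}$.
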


\begin{remark} Although we will not prove it here, one can show that the quotient space $X_{(Q_0,S_0)}/\sim$ is itself a nilspace which is an extension of the nilspace $X_2$.  The map that sends $q$ to (the equivalence class of) $(Q^*_q,S^*_q)$ can then be viewed as a ``splitting'' of that extension by a section that is itself a nilspace morphism.  It is in order to obtain this lifting that we were forced to use the larger nilspace $X_{5,5}$ instead of the smaller nilspace $X_{5,1}$, as we will need to take advantage of the freedom to modify $S$ by non-classical polynomials, and not merely by classical ones.
\end{remark}

Let us assume this lemma for the moment and obtain the desired contradiction.  Let $(q_\omega)_{\omega \in \{0,1\}^6} \in C^6(X_2)$ be a $6$-cube in $X_2$, and let 
$((Q'_\omega,S'_\omega))_{\omega \in \{0,1\}^6} \in C^6(X_{(Q_0,S_0)})$ be as in the above lemma.  Since $\Phi'$ is quintic on $X_{(Q_0,S_0)}$, we have
$$ \sum_{\omega \in \{0,1\}^6} (-1)^{|\omega|} \Phi'(Q'_\omega,S'_\omega) = 0.$$
Also, from the nilspace structure of $X_{(Q_0,S_0)}$ we have
$$ \sum_{\omega \in \{0,1\}^6} (-1)^{|\omega|} S'_\omega(0) = 
 \rho( (Q'_\omega(0))_{\omega \in \{0,1\}^6} ).$$
Subtracting, we conclude that
$$ \sum_{\omega \in \{0,1\}^6} (-1)^{|\omega|}( S'_\omega(0) - \Phi'(Q'_\omega,S'_\omega)) =  \rho( (Q'_\omega(0))_{\omega \in \{0,1\}^6} ).$$
Both sides are invariant with respect to $\sim$, so we may replace $(Q'_\omega,S'_\omega)$ with $(Q^*_{q_\omega},S^*_{q_\omega})$ in this identity, thus
$$ \sum_{\omega \in \{0,1\}^6} (-1)^{|\omega|}( S^*_{q_\omega}(0) - \Phi'(Q^*_{q_\omega},S^*_{q_\omega})) =  \rho( (Q^*_{q_\omega}(0))_{\omega \in \{0,1\}^6} ).$$
By the lifting property we have $Q^*_{q_\omega}(0) = q_\omega$.  We conclude that
$$ \rho = dF$$
where $F \colon \F_2^2 \to \T$ is the function
$$ F(q) \coloneqq S^*_q(0) - \Phi'(Q^*_q, S^*_q).$$
But this contradicts Theorem \ref{nontriv-cocycle}.

It remains to construct the lift $(Q^*_q, S^*_q)$ in Lemma \ref{exist}.  This will be accomplished by solving a certain system of constraints.  More precisely:

\begin{proposition}[Solving a system of constraints]\label{solving} Let $d \geq 0$, and let $(q_\omega)_{\omega \in \{0,1\}^d}$ be a $d$-cube in $X_2$.  Then there exists a $d$-cube $((Q'_\omega, S'_\omega))_{\omega \in \{0,1\}^d}$ in $X_{(Q_0,S_0)}$ obeying the following constraints:
\begin{itemize}
\item[(1)]  For every $\omega \in \{0,1\}^d$, one has $Q'_\omega(a) = q_\omega + Q_0(a) - Q_0(0)$ for all $a \in \F_2^M$.  In particular, $Q'_\omega(0) = q_\omega$.
\item[(2)] For every $1 \leq l \leq k-1$ and $1 \leq i_1 < \dots < i_l \leq M$, one has
$$ \partial_{e_{i_1}} \dots \partial_{e_{i_l}} S'_\omega(0) = \partial_{e_{i_2}} \dots \partial_{e_{i_l}} \psi_{\omega,i_l}(0) -
\partial_{e_{i_2}} \dots \partial_{e_{i_l}} \psi_{*,i_l}(0)
+ \partial_{e_{i_1}} \dots \partial_{e_{i_l}} S_0(0)$$
where
$$ \psi_{\omega,i_l}(a) \coloneqq \psi( Q'_\omega(a), Q'_\omega(a + e_{i_l} ) )$$
and
$$ \psi_{*,i_l}(a) \coloneqq \psi( Q_0(a), Q_0(a + e_{i_l} ) )$$
for all $a \in \F_2^M$.
\item[(3)]  One has $2S'_\omega(0) = 2S_0(0)$ for all $\omega \in \{0,1\}^d$.
\end{itemize}
Furthermore, this cube is unique up to equivalence in the following sense: if $((Q'_\omega, S'_\omega))_{\omega \in \{0,1\}^d}, ((Q''_\omega, S''_\omega))_{\omega \in \{0,1\}^d} \in C^d(X)$ both obey the properties (1)-(3), then we have $(Q'_\omega,S'_\omega) \sim (Q''_\omega, S''_\omega)$ for all $\omega \in \{0,1\}^d$.
\end{proposition}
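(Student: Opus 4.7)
The plan is to prove Proposition~\ref{solving} by constructing the required cube in stages using Lemma~\ref{lift} and the freedom to modify a lift by a quintic polynomial, and then to argue uniqueness by analyzing the resulting polynomial difference.

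For existence, the first step is to observe that condition~(1) fully determines $Q'_\omega$: we set $Q'_\omega(a) := q_\omega + Q_0(a) - Q_0(0)$. Viewing the map $(a,\omega) \mapsto Q'_\omega(a)$ on $\F_2^{M+d}$ as the sum in the abelian filtered nilspace $X_2 = {\mathcal D}^2(\F_2^2)$ of the pulled-back $M$-cube $Q_0$, the pulled-back $d$-cube $(q_\omega)$, and the constant $-Q_0(0)$, we see that this is an $(M+d)$-cube in $X_2$. Applying Lemma~\ref{lift} with $n = M+d$ yields a tentative lift $\tilde S : \F_2^{M+d} \to \tfrac{1}{2^5}\Z/\Z$ such that $((a,\omega) \mapsto (Q'_\omega(a), \tilde S(a,\omega)))$ is an $(M+d)$-cube in $X_{5,5}$; by the uniqueness clause of Lemma~\ref{lift}, $\tilde S$ is unique modulo the addition of a polynomial in $\Poly^5(\F_2^{M+d} \to \tfrac{1}{2^5}\Z/\Z)$.

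The second step is to adjust $\tilde S$ by such a quintic $P$ so that $S'(a,\omega) := \tilde S(a,\omega) + P(a,\omega)$ satisfies conditions~(2) and~(3). These conditions are linear constraints prescribing the values of $\partial_{e_{i_1}} \cdots \partial_{e_{i_l}} P(0,\omega)$ (for $l \leq k-1$ and $\omega \in \{0,1\}^d$) and of $2P(0,\omega) \bmod 1$. I would verify surjectivity of the evaluation map by exhibiting explicit quintic monomials on $\F_2^{M+d}$ that hit each constraint functional independently, using products of $a_{i_j}$'s in the $\F_2^M$-coordinates combined with $\omega$-localizers like $\chi_{\{j\}}(\omega) = \omega_j$ to isolate single $\omega$-vertices. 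Solving the resulting linear system over $\tfrac{1}{2^5}\Z/\Z$ then delivers the desired $P$, and hence $S'$. That the resulting cube lies in $X_{(Q_0,S_0)}$ follows because all its vertices are mutually $\sim_0$-equivalent (they form a single $d$-cube in $C^M(X_{5,5})$) and the $\omega = 0$ vertex is $\sim_0$-equivalent to $(Q_0,S_0)$, since $Q'_0 - Q_0$ is a constant in $X_2$ and $S'_0 - S_0$ is a quintic polynomial, both traversable by an admissible $1$-cube.

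For uniqueness, suppose $((Q'_\omega, S'_\omega))$ and $((Q''_\omega, S''_\omega))$ both satisfy (1)--(3). Condition~(1) forces $Q'_\omega = Q''_\omega$, and Lemma~\ref{lift} then gives $T(a,\omega) := S''(a,\omega) - S'(a,\omega) \in \Poly^5(\F_2^{M+d} \to \tfrac{1}{2^5}\Z/\Z)$. Subtracting the two instances of (2) and (3) yields $\partial_{e_{i_1}} \cdots \partial_{e_{i_l}} T(0,\omega) = 0$ for all $l \leq k-1$ and $2T(0,\omega) = 0$ for each $\omega$. Using the explicit description of non-classical quintic polynomials on $\F_2^{M+d}$ together with these vanishing conditions (and exploiting the full cube structure to propagate the vanishing from the origin across the $a$-slice), one concludes that each $T_\omega$ is constant in $\tfrac{1}{2}\Z/\Z$, establishing $(Q'_\omega, S'_\omega) \sim (Q''_\omega, S''_\omega)$.

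The hard part will be Step~2 of existence, namely verifying that the system of prescribed derivative values at each $(0,\omega)$ is simultaneously realizable by a single quintic polynomial on $\F_2^{M+d}$; this requires careful bookkeeping so that the chosen test monomials localize in both the $a$- and $\omega$-directions without spuriously activating other constraints. A secondary subtlety is ensuring the uniqueness argument correctly navigates the non-classical polynomial structure on $\F_2^{M+d}$, which may require invoking Lemma~\ref{explicit-desc} and exploiting the coupling between the $a$- and $\omega$-directions beyond the pointwise-in-$\omega$ constraints of (2)--(3).
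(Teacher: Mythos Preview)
Your outline captures the right high-level picture (use Lemma~\ref{lift} to get a first lift, then adjust by a quintic), but there are two genuine gaps in the existence argument, and one in uniqueness.

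\textbf{Existence, solvability of the linear system.} The crucial point you gloss over is that the constraints in~(2) cannot be hit independently by quintic monomials once $d\geq 2$. If you differentiate a $P\in\Poly^5(\F_2^{M+d})$ by $\partial_{e_{i_1}}\cdots\partial_{e_{i_l}}$ with $i_j\le M$, what remains is a polynomial of degree $\le 5-l$ in the $\omega$-variables; in particular for $l=4$ it is affine in $\omega$. So you cannot ``localize to a single vertex $\omega_0$'' by multiplying by an indicator of degree $d$, and the constraint functionals are far from independent. The system is solvable only because the target values in~(2) themselves have degree $\le 5-l$ in $\omega$, and this is exactly where the strong $2$-homogeneity $\rho=d^5\psi$ enters: from $d^6\tilde S=Q^*\rho$ one gets $\partial_{e_{i_1}}\tilde S-\psi_{i_1}\in\Poly^4(\F_2^{M+d})$, which is what forces the $\omega$-dependence of the right-hand side of~(2) to be of the correct degree. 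The paper does not solve a linear system; it runs an induction on $i_1=i_*$, at each step using this quartic bound to build an explicit correction $R$ depending only on the variables $a_{i_*},\dots,a_M,\omega$ (so earlier steps are preserved). Your proposal does not invoke $\rho=d^5\psi$ at all in existence, and without it the system is underdetermined.

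\textbf{Existence, membership in $X_{(Q_0,S_0)}$.} Your claim that ``$S'_0-S_0$ is a quintic polynomial'' is correct (since $\rho$ depends only on $d^2Q$, hence $(Q'_0)^*\rho=Q_0^*\rho$), but that is not enough for $(Q_0,S_0)\sim_0(Q'_0,S'_0)$. A $1$-cube in $C^M(X_{5,5})$ is an $(M{+}1)$-cube in $X_{5,5}$, and for any such cube $\partial_t$ of the $S$-component must lie in $\psi(\hat Q,\hat Q+c)+\Poly^4$; equivalently one needs $S'_0-S_0\equiv\psi(Q_0,Q_0+c)\pmod{\Poly^4(\F_2^M)}$ with $c=q_0-Q_0(0)$. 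Nothing in your adjustments for~(2) and~(3) enforces this. The paper avoids the issue entirely by working on $\F_2^{M+1+d}$ with an explicit bridge coordinate $t$ and first enforcing property~(0) (agreement with $(Q_0,S_0)$ on $t=0$), so that the resulting $d$-cube sits inside $\Sigma^{(1+d)}_{Q_0,S_0}$ and hence in $X_{(Q_0,S_0)}$ by construction.

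\textbf{Uniqueness.} Conditions~(2) and~(3) give $\partial_{e_{i_1}}\cdots\partial_{e_{i_l}}T(0,\omega)=0$ only for $l\le k-1=4$, and $2T(0,\omega)=0$. This leaves the $l=5$ derivatives undetermined, so $T_\omega$ could still be a nonzero classical homogeneous quintic on $\F_2^M$ plus a constant in $\frac12\Z/\Z$, which is not constant. The paper closes this by using that both cubes come from $\Sigma^{(1+d)}_{Q_0,S_0}$: one writes $\partial_{e_{i_1}}\cdots\partial_{e_{i_5}}S'_\omega(0)$ as $\partial_{e_{i_1}}\cdots\partial_{e_{i_5}}S_0(0)$ plus a $\rho$-term depending only on $\tilde Q$, and since $Q'_\omega=Q''_\omega$ these agree. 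Your phrase ``exploiting the full cube structure'' points in the right direction but does not supply this step.
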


Let us assume this proposition for the moment and see how it implies Lemma \ref{exist}.  Applying this proposition with $d=0$, we see that for each $q \in X_2$, we can find $(Q^*_q, S^*_q) \in X_{(Q_0,S_0)}$ obeying the $d=0$ conclusions (1)-(3) of the proposition; in particular, $Q^*_q(0)=q$.  Now let $(q_\omega)_{\omega \in \{0,1\}^6} \in C^6(X_2)$ be a $6$-cube in $X_2$, and let $((Q'_\omega,S'_\omega))_{\omega \in \{0,1\}^6} \in C^6(X_{(Q_0,S_0)})$ be as in the proposition.  For each $\omega \in \{0,1\}^6$, the point $(Q'_\omega,S'_\omega)$ in $X_{(Q_0,S_0)}$ obeys the $d=0$ axioms of (1)-(3) with respect to the $0$-cube $q_\omega$.  Since $(Q^*_{q_\omega}, S^*_{q_\omega})$ does also, we conclude from the uniqueness component of this proposition that $(Q'_\omega,S'_\omega) \sim (Q^*_{q_\omega}, S^*_{q_\omega})$ for all $\omega \in \{0,1\}^d$.  Lemma \ref{exist} follows.

It remains to establish Proposition \ref{solving}.  We first verify the uniqueness aspect.  Suppose we have two cubes $((Q'_\omega, S'_\omega))_{\omega \in \{0,1\}^d}, ((Q''_\omega, S''_\omega))_{\omega \in \{0,1\}^d} \in C^d(X_{(Q_0,S_0)})$ both obeying axioms (1)-(3).  From axiom (1) we see that $Q'_\omega = Q''_\omega$ for all $\omega \in \{0,1\}^d$.  From axiom (2), we see that
\begin{equation}\label{lard}
\partial_{e_{i_1}} \dots \partial_{e_{i_l}} S'_\omega(0)
= \partial_{e_{i_1}} \dots \partial_{e_{i_l}} S''_\omega(0)
\end{equation}
whenever $1 \leq l \leq k-1$ and $1 \leq i_1 < \dots \leq i_l \leq M$.  We claim that the same statement also holds for $l=k$.  Indeed, by construction of $X_{(Q_0,S_0)}$, we can find $(\tilde Q, \tilde S) \in \Sigma^{(1+d)}_{(Q,S)}$ such that
$$ 
(Q'_\omega,S'_\omega)(a) = (\tilde Q, \tilde S)(a, 1, \omega )$$
for all $\omega \in \{0,1\}^d$ and $a \in \F_2^M$.  Since $\tilde S$ agrees with $S$ on $\F_2^M$, we conclude that
$$
\partial_{e_{i_1}} \dots \partial_{e_{i_k}} S'_\omega(0) =
\partial_{e_{i_1}} \dots \partial_{e_{i_k}} S_0(0)
+ \partial_{(0,1,\omega)} \partial_{e_{i_1}} \dots \partial_{e_{i_k}} \tilde S(0).$$
As $(\tilde Q, \tilde S)$ is an $M+1+d$-cube in $X_{5,5}$, the right-hand side is equal to
$$
\partial_{e_{i_1}} \dots \partial_{e_{i_k}} S_0(0)
 + \rho( (\tilde Q(\sum_{j =1}^{k+1} \alpha_j w_j))_{\alpha \in \{0,1\}^{k+1}} )$$
 where $w_j \coloneqq e_{i_j}$ for $j=1,\dots,k$ and $w_{k+1} \coloneqq (0,1,\omega)$.  This expression depends only on $Q'_\omega$, $Q_0$, and $S_0$.  We have a similar formula for $S''_\omega$.  Since $Q'_\omega = Q''_\omega$, we conclude that \eqref{lard} holds for $l=k$.

 Now we claim that \eqref{lard} also holds for $l>k$.  It suffices to show that
$$ \partial_{e_{i_1}} \dots \partial_{e_{i_{k+1}}} S'_\omega(a)
= \partial_{e_{i_1}} \dots \partial_{e_{i_{k+1}}} S''_\omega(a)$$
whenever $a \in \F_2^M$ and $1 \leq i_1 < \dots < i_{k+1} \leq M$.  As $(Q'_\omega,S'_\omega)$ is an $M$-cube in $X_{5,5}$, one has
\begin{align*}
 \partial_{e_{i_1}} \dots \partial_{e_{i_{k+1}}} S'_\omega(a)
&= \Psi( (Q'_\omega((a + \sum_{j =1}^{k+1} \alpha_j e_{i_j}))_{\alpha \in \{0,1\}^{k+1}} ) ).
\end{align*}
Similarly for $S''_\omega$ and $Q''_\omega$.  Since $Q'_\omega = Q''_\omega$, we obtain \eqref{lard} for all $l > k$.

Now that \eqref{lard} has been established for all $l > 0$, we see from Taylor expansion that
$$ S''_\omega = S'_\omega - S'_\omega(0) + S''_\omega(0).$$
From axiom (3), $2( - S'_\omega(0) + S''_\omega(0) ) = -2S(0)+2S(0)=0$, hence for each $\omega \in \{0,1\}^d$, $S''_\omega$ is either equal to $S'_\omega$ or $S'_\omega + \frac{1}{2}$.  Since also $Q'_\omega = Q''_\omega$, we conclude that $(Q'_\omega,S'_\omega) \sim (Q''_\omega,S''_\omega)$.  This completes the proof of uniqueness.

Now we establish existence. Let $d \geq 0$, and let $(q_\omega)_{\omega \in \{0,1\}^d}$ be a $d$-cube in $X_2$.  By the construction of $X_{(Q_0,S_0)}$, our task is to find a $M+1+d$-cube $(\tilde Q, \tilde S)$ in $X_{5,5}$ obeying the following properties:
\begin{itemize}
\item[(0)]  For $a \in \F_2^M$, we have $(\tilde Q,\tilde S)(a,0^{n-M}) = (Q_0,S_0)(a)$.
\item[(1)]  For every $\omega \in \{0,1\}^d$ and $a \in \F_2^M$, one has $\tilde Q(a,1,\omega) = q_\omega + Q_0(a) - Q_0(0)$.
\item[(2)] For every $1 \leq l \leq k-1$ and $1 \leq i_1 < \dots < i_l \leq M$, one has
\begin{equation}\label{psi-S}
\partial_{e_{i_1}} \dots \partial_{e_{i_l}} \tilde S(0, 1, \omega) = \partial_{e_{i_2}} \dots \partial_{e_{i_l}} \psi_{i_l}(0,1,\omega) -
\partial_{e_{i_2}} \dots \partial_{e_{i_l}} \psi_{i_l}(0,0,0)
+ \partial_{e_{i_1}} \dots \partial_{e_{i_l}} \tilde S(0,0,0)
\end{equation}
where
$$ \psi_{i_l}(x) \coloneqq \psi( \tilde Q(x), \tilde Q(x+e_{i_l}) )$$
for all $x \in \F_2^{M+1+d}$.
\item[(3)]  We have $2\tilde S(0^M,1,\omega)=2S_0(0)$ for all $\omega \in \{0,1\}^d$.
\end{itemize}

To obey (1) (and the $\tilde Q$ component of (0)), we define $\tilde Q \colon \F_2^{M+1+d} \to Y$ by the formula
$$ \tilde Q(a,t,\omega) \coloneqq q_\omega + tq_0 - q_0 + Q(a) - t Q(0)$$
for $a \in \F_2^M$, $t \in \F_2$, $\omega \in \F_2^d$.  One easily verifies that $\tilde Q$ is a polynomial of degree $2$ that obeys (2) and the $\tilde Q$ component of (1).  By Lemma \ref{lift}, we can then find a map $\tilde S_0 \colon \F_2^{M+1+d} \to \frac{1}{2^5}\Z/\Z$ such that $(\tilde Q, \tilde S_0)$ is a $M+1+d$-cube in $X_{5,5}$ (and in fact in $X_{5,1}$).  We need to then find an element $\tilde S$ of the coset $\tilde S_0 + \Poly(\F_2^{M+1+d} \to \frac{1}{2^5}\Z/\Z)$ which obeys the following properties:

\begin{itemize}
    \item[(0)] For $a \in \F_2^M$, we have $\tilde S(a,0^{n-M}) = S_0(a)$.
    It is not necessarily the case that $\tilde S$ agrees with $S$ on $\F_2^M$ (so that $(\tilde Q, \tilde S)$ lies in $\Sigma^{(1+d)}_{(Q,S)}$).
    \item[(2)] For every $1 \leq l \leq k-1$ and $1 \leq i_1 < \dots < i_l \leq M$, \eqref{psi-S} holds.
    \item[(3)] We have $2\tilde S(0^M,1,\omega)=2S_0(0)$ for all $\omega \in \{0,1\}^d$.
\end{itemize}

We will enforce each of these properties (0), (2), (3) in turn (making sure that each modification of $\tilde S$ that we make does not destroy any properties that we have already established).

We first locate a function $\tilde S \in \tilde S_0 + \Poly(\F_2^{M+1+d} \to \frac{1}{2^5}\Z/\Z)$ obeying (0).  Observe that $(Q_0,\tilde S(\cdot,0^{1+d}))$ and $(Q_0,S_0)$ are both $M$-cubes in $X_{5,5}$, and hence the restriction of $\tilde S_0 - S$ to $\F_2^M$ lies in $\Poly^5(\F_2^M \to \frac{1}{2^5}\Z/\Z)$.  By composing this polynomial with the obvious projection from $\F_2^{M+1+d}$ to $\F_2^M$, we conclude that $\tilde S_0 - S$ agrees on $\F_2^M \times \{0\} \times \{0^d\}$ with some polynomial in $\Poly^5(\F_2^{M+1+d} \to \frac{1}{2^5}\Z/\Z)$.  Subtracting this polynomial from $\tilde S_0$, we obtain an element $\tilde S$ of $\tilde S_0 + \Poly(\F_2^{M+1+d} \to \frac{1}{2^5}\Z/\Z)$ oyeing property (0).

We now enforce the property (2) by induction on $i_1$. More precisely, we assume inductively that we have found $\tilde S \in \tilde S_0 + \Poly(\F_2^{M+1+d} \to \frac{1}{2^5}\Z/\Z)$ obeying (0) for which (1) has already been established in the case $i_1 < i_*$
for some $1 \leq i_* \leq M$, and wish to modify $\tilde S$ so that it still obeys (0) but now also obeys (1) in the case $i_1 \leq i_*$.

Observe that if we add or subtract to $\tilde S$ a polynomial $P \in \Poly^5(\F_2^{M+1+d} \to \frac{1}{2^5}\Z/\Z)$ which vanishes on $\F_2^M \times \{0\} \times \{0^d\}$, and which also does not depend on the first $i_*-1$ coordinates in the sense that $\partial_{e_i} P = 0$ for $1 \leq i < i_*$, then $\tilde S$ continues to obey (0) and (1) for $i_1 < i_*$ (though again this may destroy property (d)).  We exploit this freedom to modify $\tilde S$ as follows.

First, we use the fact that $\rho = d^5 \psi$ to write the condition \eqref{S-eq} on the $M+1+d$-cube $(\tilde Q, \tilde S)$ as
$$
\partial_{h_1} \dots \partial_{h_5} ( \partial_h \tilde S - \psi( \tilde Q( \cdot ), \tilde Q( \cdot + h ) ) ) = 0$$
for all $h,h_1,\dots,h_5 \in \F_2^{M+1+d}$.  Equivalently, one has
\begin{equation}\label{hd}
\partial_h \tilde S - \psi( \tilde Q( \cdot ), \tilde Q( \cdot + h ) ) \in \Poly^4(\F_2^{M+1+d})
\end{equation}
for each $h \in \F_2^{M+1+d}$.  Applying this with $h = e_{i_*}$, we conclude that the function
$$ P \coloneqq \partial_{e_{i_*}} \tilde S - \psi_{i_*} $$
lies in $\Poly^4(\F_2^{M+1+d})$.  Now we look at the expression
$$ P(a,1,\omega) - P(a,0,0^d) = \partial_{e_{i_*}} \tilde S(a,1,\omega) - \psi_{i_*}(a,1,\omega) - \partial_{e_{i_*}} \tilde S(a,0,0^d) + 
\psi_{i_*}(a,0,0^d)$$
for $a \in 0^{i_*} \times \F_2^{M-i_*}$ and $\omega \in \F_2^d$.  Expanding $P$ out into monomials using Lemma \ref{explicit-desc}, we can write
$$ P(a,1,\omega) - P(a,0,0^d) = \sum_{l=1}^4 \sum_{i_* < i_1 < \dots < i_l \leq M+1+d; i_l > M} \frac{ c_{i_1,\dots,i_l} |x_{i_1}| \dots |x_{i_l}|}{2^{5-l}} \mod 1$$
for some coefficients $c_{i_1,\dots,i_l} \in \Z$, where $(x_1,\dots,x_{M+1+d}) \coloneqq (a,1,\omega)$.  If we then introduce the function $R \colon \F_2^{M+1+d} \to \frac{1}{2^5}\Z/\Z$ by the formula
$$ R(x_1,\dots,x_{M+1+d}) \coloneqq \sum_{l=1}^{4} \sum_{i_* < i_1 < \dots < i_l \leq M+1+d; i_l > M} \frac{ c_{i_1,\dots,i_l} |x_{i_*}| |x_{i_1}| \dots |x_{i_l}|}{2^{k-l}} \mod 1
$$
for $(x_1,\dots,x_{M+1+d}) \in \F_2^M$, then from Lemma \ref{explicit-desc} we see that\footnote{It is here that we need to have worked with $X_{5,5}$ instead of $X_{5,1}$, as we cannot guarantee that the quintic polynomial $R$ will be classical.} $R \in \Poly^5(\F_2^{M+1+d} \to \frac{1}{2^5}\Z/\Z)$ and that
$$ P(a,1,\omega) - P(a,0,0^d) = \partial_{e_{i_*}} R(a,1,\omega)$$
for $a \in 0^{i_*} \times \F_2^{M-i_*}$ and $\omega \in \F_2^d$.
Also $R$ vanishes on $\F_2^M$ and is invariant with respect to the first $i_*$ coordinates, so as discussed above we may freely subtract $R$ from $\tilde S$.  If we do so, then we now have
$$ P(a,1,\omega) - P(a,0,0^d) = 0$$
for all $a \in \F_2^M$ and $\omega \in \F_2^d$, which on further differentiation gives \eqref{psi-S} for $i_1=i_*$ as required.

Finally, we enforce the property (3).  As already observed, if we add or subtract to $\tilde S$ a polynomial $P \in \Poly^5(\F_2^{M+1+d} \to \frac{1}{2^5}\Z/\Z)$ which vanishes on $\F_2^M$, and which also does not depend on the first $M$ coordinates, then the properties (0), (2) remain unaffected.  To exploit this, recall that $\tilde S$ lies in the coset $\tilde S_0 + \Poly^5(\F_2^{M+1+d} \to \frac{1}{2^5}\Z/\Z)$; since $\tilde S_0$ takes values in $\frac{1}{2}\Z/\Z$, we conclude from \eqref{double} that
$$ 2\tilde S \in  \Poly^4(\F_2^{M+1+d} \to \frac{1}{2^4}\Z/\Z)$$
and hence from \eqref{double} again we may write
\begin{equation}\label{2p}
 2 \tilde S = 2P
 \end{equation}
for some $P \in \Poly^5(\F_2^{M+1+d} \to \frac{1}{2^5}\Z/\Z)$.  The function
$$ (a,t,\omega) \mapsto P(0^M,t,\omega) - P(0^M,0,0^d) $$
is then a quintic polynomial on $\F_2^{M+1+d}$ that vanishes on $\F_2^M$ and does not depend on the first $M$ coordinates; if we then define
$$ \tilde S'(a,t,\omega) \coloneqq S(a,t,\omega) - P(0^M,t,\omega) + P(0^M,0,0^d)$$
then $\tilde S'$ lies in $\tilde S_0 + \Poly^5(\F_2^{M+1+d} \to \frac{1}{2^5}\Z/\Z)$, obeys (0) and (2), and for each $\omega \in \{0,1\}^d$ we have
$$ 2\tilde S'(0^M,1,\omega) = 2 P(0^M,0,0^d) = 2 \tilde S(0^M,0,0^d) = 2S_0(0)$$
giving (3).  This completes the proof of Proposition \ref{solving}, and thus Theorem \ref{counter}.

\begin{remark}  If one replaces $X_{5,5}$ by $X_{5,1}$ in the above construction then one no longer obtains a counterexample to Conjecture \ref{inv-conj-strong}.  We sketch the proof of this as follows. By Remark \ref{alt-form}, the pseudo-quintic function $S$ takes the form
$$ S = \frac{\binom{R}{2} Q^{(2)} + P}{2} \mod 1$$
for some randomly chosen polynomials $R \in \Poly^3(\F_2^n \to \Z/4\Z)$, $Q^{(1)}, Q^{(2)} \in \Poly^2(\F_2^n \to \F_2)$, $P \in \Poly^5(\F_2^n \to \F_2)$ with $Q^{(1)} = R \mod 2$; note crucially that $P$ now takes values in the classical range $\F_2$ as opposed to the non-classical range $\frac{1}{2^5}\Z/\Z$.  After many applications of the Leibniz rule \eqref{leibniz} (and \eqref{fh}) we see that for any shifts $a,b,c,d,e \in \F_2^n$ we have the fifth derivative computation
$$ \partial_a \partial_b \partial_c \partial_d \partial_e S = \frac{\partial_a \partial_b Q^{(1)} \partial_c \partial_d Q^{(1)} \partial_e Q^{(2)} + \dots}{2}$$
where the $\dots$ are a sum of terms that are either constants in $\F_2$ (depending on $a,b,c,d,e$), or linear functions that resemble  permutations of $\partial_a \partial_b Q^{(1)} \partial_c \partial_d Q^{(1)} \partial_e Q^{(2)}$ (in fact there are $44$ terms of this latter type).  For $a,b,c,d,e$ chosen at random, it is true with positive probability that $\partial_a \partial_b Q^{(1)} = \partial_c \partial_d Q^{(1)}= 1$, so that the displayed term $\partial_a \partial_b Q^{(1)} \partial_c \partial_d Q^{(1)} \partial_e Q^{(2)}$ simplifies to $\partial_e Q^{(2)}$, while the other permutations of this term vanish.  From this one can conclude that with high probability, and for a given random shift $e$ the linear functions $\partial_e Q^{(2)}$ are measurable in the sense that they are a function of boundedly many shifts of $S$ by $e$ and other random shifts.  Similarly for $\partial_e Q^{(1)}$.  In a similar spirit, we have the fourth derivative computation
$$ \partial_a \partial_b \partial_c \partial_d S = \frac{\partial_a \partial_b Q^{(1)} \partial_c \partial_d Q^{(1)} Q^{(2)} + \dots}{2}$$
where the terms in $\dots$ take values in $\F_2$ and are either permutations of the displayed term, are combinations of functions already known to be measurable, or are linear.  By the preceding argument one can show that with high probability $Q^{(2)}$ is measurable up to a classical linear polynomial; and similarly for $Q^{(1)}$.  Finally, we have the second derivative computation
$$ \partial_a \partial_b S = \frac{\binom{R}{2} \partial_a \partial_b Q^{(2)} + \dots}{2}$$
where the terms in $\dots$ take values in $\F_2$ and are either combinations of functions already known to be measurable, or are cubic.  Repeating the previous argument, we conclude with high probability that $\binom{R}{2}$ (which one can check to be a classical quartic polynomial) is measurable up to a classical cubic polynomial.  Taking advantage of the ability to pointwise multiply in the classical range $\F_2$ using Lemma \ref{prod}, we conclude with high probability that $\binom{R}{2} Q^{(2)}$ is measurable up to a classical quintic polynomial.  Hence $S$ is measurable up to a quintic polynomial, which must then also be measurable since $S$ is measurable.  By a Fourier expansion, one can then show that $S$ correlates with a measurable quintic polynomial, giving Conjecture \ref{inv-conj-strong} in this case. Thus one can explain the need to work with the more complicated space $X_{5,5}$ instead of $X_{5,1}$ in order to destroy the ability to multiply polynomials together by working in non-classical ranges such as $\frac{1}{2^5}\Z/\Z$ instead of $\F_2$.
\end{remark}

\begin{remark}\label{Direct} By combining these constructions with the arguments in Appendix \ref{app}, we obtain a counterexample to Conjecture \ref{btz-conj}.  It is natural to ask whether there is a shortcut approach that could construct the counterexample to Conjecture \ref{btz-conj} more directly, without first building a counterexample to Conjecture \ref{inv-conj-strong}.  Morally speaking, this should proceed by starting with the space $\Hom_\Box({\mathcal D}^1(\F_2^\omega) \to X_{5,5})$, which is a compact $\F_2^\omega$-system that can be naturally equipped with a Haar measure.  This system is not ergodic, but a generic component of the ergodic decomposition should be a $5$-step ergodic $\F_2^\omega$-system that fails to be Abramov of order $5$ (cf., the role of the pair $(Q_0,S_0)$ in the above analysis). The rigorous verification of these claims seems to be of comparable complexity to the arguments just presented, and so we do not detail this more direct approach here.  

On the other hand, in the spirit of Remark \ref{rom}, Candela et al.~\cite{cgss-2023} recently established that this system corresponding to the generic component of the ergodic decomposition admits an extension that is Abramov of order $5$ (and it should even be a Weyl system of order $5$, in the sense of, e.g., \cite{jst-tdsystems}). 
\end{remark}

\appendix

\section{Nilspaces, filtered abelian groups, and non-classical polynomials}\label{nilspace-app}

In this section, we gather several standard (and primarily algebraic) facts about nilspaces, filtered abelian groups, and polynomial maps. Most of the concepts introduced below are not new, but the terminology varies between authors. For example, the notion of a $k$-cocycle (see Definition \ref{nil-cocycle-def}) was first introduced by Antol\'in-Camarena and Szegedy \cite[Definition 2.14]{camarena-szegedy} under the name of ``degree $k$ cocycle''. The same concept was subsequently treated by Candela \cite{candela0}, and later by Gutman, Manners, and Varj\'u \cite[Definition~4.8]{gmvII}, who use the terminology ``$k$-cocycle''. To avoid confusion between the degree of a polynomial and that of a cocycle, we follow the terminology of Gutman--Manners--Varj\'u in this paper. We also note that our Definition~\ref{nil-cocycle-def} is essentially the same as that in \cite{camarena-szegedy}, whereas the version in \cite{gmvII} includes an additional continuity requirement.  

Similarly, the notions of $k$-coboundaries and $k$-extensions, as we use them here, were originally called degree $k$ coboundaries and degree $k$ extensions in \cite{camarena-szegedy}, and were further developed in Candela \cite{candela0}.  

Finally, we emphasize that our concept of a $k$-cocycle is \emph{not} the same as the similarly named notion from homological algebra, where $k$-cocycles are $k$-cochains with vanishing coboundary (in fact, from the homological perspective all our cocycles are $1$-cocycles).

%

%\begin{center}
%\begin{tabular}{ | m{2.7cm} | m{3.5cm}| m{3cm} | m{2cm}| } 
%  \hline
%  Notion$\backslash$ Author   & Candela \cite{candela},\cite{candela0} & Gutman-Manners-Varj\'u \cite{gmv},\cite{gmvII},\cite{gmvIII} & This paper \\ 
%  \hline
% cocycles & cocycle of degree $k$ & $k$-cocycle & $k$-cocycle \\ 
%  \hline
 % extensions & degree $k$ extensions & not defined& $k$-extensions \\ 
 % \hline
 % skew products & cell & cell & {} \\
 % \hline
%\end{tabular}
%\end{center}

\subsection{Nilspaces}\label{nilspace-sec}

Host and Kra \cite{hk-parallel}  introduced a combinatorial framework for cubes in abstract sets in dimensions $2$ and $3$ as an abstraction of the concept of a parallelepiped in a group or dynamical system, and Antol\'in-Camarena and Szegedy \cite{camarena-szegedy} later extended this to all dimensions, thereby defining the general notion of abstract nilspaces. They can be defined in the set-theoretic, topological, and measurable categories, but we will only need to consider finite nilspaces, which allows us to work in the technically simpler set-theoretic category.  We recall the definition of a nilspace, following \cite[Definition 1.2.1]{candela0}:

\begin{definition}[Nilspaces]\label{nilspace-def} A nilspace is a set $X$ together with a collection of sets $C^n(X) \subset X^{\{0,1\}^n}$ for each non-negative integer $n$, satisfying the following axioms: 
\begin{itemize}
    \item[(i)] (Composition) For every $m,n \geq 0$ and every cube morphism $\phi \colon \{0,1\}^m \to \{0,1\}^n$ (by which we mean a function that extends to an affine map from $\R^m$ to $\R^n$) and every $c \in C^n(X)$, we have $c \circ \phi \in C^m(X)$.
    \item[(ii)] ($0$-ergodicity) $C^0(X) = X$.  If we have the stronger property $C^1(X) = X^{\{0,1\}}$, we say that the nilspace is \emph{ergodic} (or \emph{$1$-ergodic}).
    \item[(iii)] (Corner completion) Let $n \geq 1$, and let $c' \colon \{0,1\}^n \backslash \{1\}^n \to X$ be a function such that every restriction of $c'$ to an $(n-1)$-face containing $0^n$ is in $C^{n-1}(X)$. Then there exists $c \in C^n(X)$ such that $c(v) = c'(v)$ for all $v \neq 1^n$. If this $c$ is unique, we say that $X$ is an \emph{$(n-1)$-step nilspace}.
\end{itemize}
Elements of $C^n(X)$ will be referred to as \emph{$n$-cubes} in $X$.

A \emph{nilspace morphism} $\phi \colon X \to Y$ between two nilspaces is a function that preserves $n$-cubes for every $n \geq 0$, in the sense that $(\phi(x_\omega))_{\omega \in \{0,1\}^n} ) \in C^n(Y)$ whenever $(x_\omega)_{\omega \in \{0,1\}^n} \in C^n(X)$.  The space of such morphisms will be denoted $\Hom_\Box(X \to Y)$.
\end{definition}

Clearly the collection of nilspaces and their morphisms form a category.  It is also easy to see that if a nilspace $X$ is $k$-step, then it is also $k'$-step for any $k' \geq k$.

\begin{remark}[Ergodic decomposition]\label{ergodic-decomposition}  In much of the literature (e.g., \cite{candela0}) the term ``nilspace'' is used to denote what we call an ``ergodic nilspace'', but it will be convenient for us to only impose the weaker axiom of $0$-ergodicity in our basic definitions.   In any event, it is often not difficult to reduce to the ergodic case via the following \emph{ergodic decomposition}.  If $X$ is a nilspace, we can define a relation\footnote{This is a special case of a more general class of equivalence relations $\sim_k$ one can define on nilspaces; see \cite[Definition 3.2.3]{candela0}.} $\sim_0$ on $X$ by declaring $x \sim_0 y$ if $(x,y) \in C^1(X)$.  It is not difficult to verify that this is an equivalence relation, that each equivalence class has the structure of an ergodic nilspace, and the original nilspace $X$ is the disjoint union of these ergodic nilspaces; see \cite[Lemma 3.1.8]{candela0}.  Because of this, many of the foundational results on ergodic nilspaces (such as those set out in \cite{candela0}) extend without difficulty to the more general nilspace setting.
\end{remark}

\begin{remark}[Cube spaces as nilspaces]\label{cube-nilspace}  If $X$ is a nilspace and $d \geq 0$, then the collection $C^d(X)$ of $d$-cubes in $X$ is itself a nilspace, with cube structure given by
$$ C^n(C^d(X)) \coloneqq C^{d+n}(X)$$
for all $n \geq 0$, after performing the identification
\begin{equation}\label{nil-ident}
(x_\omega)_{\omega \in \{0,1\}^{d+n}} \equiv ((x_{\omega,\omega'})_{\omega \in \{0,1\}^d})_{\omega' \in \{0,1\}^n}
\end{equation}
that interprets any $(d+n)$-cube $(x_\omega)_{\omega \in \{0,1\}^{d+n}} \in C^{d+n}(X)$ as an $n$-cube of $d$-cubes.  One can easily check that $C^d(X)$ obeys the nilspace axioms, and is $k$-step if $X$ is $k$-step, although we caution that $C^d(X)$ need not be ergodic even when $X$ is ergodic (this is a primary reason why we do not impose ergodicity in our definition of a nilspace).
\end{remark}

\begin{remark}[Morphism spaces as nilspaces]\label{morphism-nilspace}  If $X, Y$ are nilspaces, then the collection $\Hom_\Box(X \to Y)$ of nilspace morphisms from $X$ to $Y$ is itself a nilspace, with the cube structure given by
$$ C^n(\Hom_\Box(X \to Y)) \coloneqq \Hom_\Box(X \to C^n(Y))$$
for all $n \geq 0$, where we view a map from $X$ to $C^n(Y) \subset Y^{\{0,1\}^n}$ as a $\{0,1\}^n$-tuple of maps from $X$ to $Y$ in the obvious fashion.  One can easily check that $\Hom_\Box(X \to Y)$ obeys the nilspace axioms, and is $k$-step if $Y$ is $k$-step.  Again, we caution that $\Hom_\Box(X \to Y)$ need not be ergodic even when $X,Y$ are both ergodic.
\end{remark}

By definition, a nilspace morphism $\phi \colon X \to Y$ has to preserve $n$-cubes for every $n \geq 0$.  But if $Y$ is $k$-step, it turns out one only has to check preservation of $k+1$-cubes:

\begin{lemma}[Preserving $k+1$-cubes suffice]\label{k-suffices}  Let $X$ be a nilspace, $Y$ be a $k$-step nilspace for some $k \geq 0$, and let $\phi \colon X \to Y$ be a map that preserves $k+1$-cubes.  Then $\phi$ is a nilspace morphism.
\end{lemma}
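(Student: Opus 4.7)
The plan is to reduce the statement to two cases, one below $k+1$ and one above, exploiting the two halves of the nilspace axioms separately. The low-dimensional case uses only the composition axiom, while the high-dimensional case uses the corner completion axiom together with its uniqueness (i.e., the $k$-step property of $Y$). The case $n = k+1$ is the hypothesis, and $n = 0$ is trivial since $C^0(Y) = Y$.

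For $1 \leq n \leq k+1$, I would use a ``projection trick''. Define the cube morphism $\pi \colon \{0,1\}^{k+1} \to \{0,1\}^n$ by $\pi(\omega_1,\dots,\omega_{k+1}) \coloneqq (\omega_1,\dots,\omega_n)$ (affine, hence a cube morphism) and the inclusion $\iota \colon \{0,1\}^n \to \{0,1\}^{k+1}$ by $\iota(\omega) \coloneqq (\omega, 0^{k+1-n})$ (also affine). Given $c \in C^n(X)$, the composition axiom gives $c \circ \pi \in C^{k+1}(X)$, the hypothesis gives $\phi \circ c \circ \pi \in C^{k+1}(Y)$, and the composition axiom applied again with $\iota$ yields $\phi \circ c = \phi \circ c \circ \pi \circ \iota \in C^n(Y)$, since $\pi \circ \iota = \mathrm{id}_{\{0,1\}^n}$.

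For $n \geq k+2$, I would induct on $n$, assuming $\phi$ preserves $m$-cubes for all $m \leq n-1$. Fix $c \in C^n(X)$. For each $(n-1)$-face $F$ containing $0^n$, $c|_F \in C^{n-1}(X)$ by composition, so by the inductive hypothesis $\phi \circ c|_F \in C^{n-1}(Y)$. Thus the restriction of $\phi \circ c$ to $\{0,1\}^n \setminus \{1^n\}$ is a valid ``corner'' in $Y$, and the corner completion axiom provides $d \in C^n(Y)$ extending this corner; uniqueness of the completion is guaranteed because $Y$ is $k$-step and $n \geq k+1$. To identify $d$ with $\phi \circ c$, it suffices to verify $d(1^n) = \phi(c(1^n))$. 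Pick any $(n-1)$-face containing $1^n$, say $F' = \{\omega : \omega_1 = 1\}$. Both $d|_{F'}$ and $\phi \circ c|_{F'}$ are $(n-1)$-cubes in $Y$ (the first by composition of the cube $d$ with the face inclusion, the second by the inductive hypothesis since $c|_{F'} \in C^{n-1}(X)$), and they agree on the corner $F' \setminus \{1^n\}$ by construction of $d$. Since $n - 1 \geq k+1$ and $Y$ is $k$-step, corner completion uniqueness (transported from a corner at $0^{n-1}$ to a corner at $1^{n-1}$ via the reflection cube morphism $\omega \mapsto 1-\omega$, which is affine) forces them to agree everywhere on $F'$, giving $d(1^n) = \phi(c(1^n))$.

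The one subtlety is the use of uniqueness of corner completion at a corner other than $0^n$; this is justified because the reflection $\omega \mapsto (1,\dots,1) - \omega$ is affine and therefore a cube morphism, and conjugating the axiom by this reflection transfers uniqueness at $0^n$ to uniqueness at $1^n$. Aside from this, the argument is structural and uses only the two nilspace axioms (composition and $k$-step corner completion) together with the inductive hypothesis on $n$, with the base of the induction $n = k+2$ drawing directly on the hypothesis that $\phi$ preserves $(k+1)$-cubes.
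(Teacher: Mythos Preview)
Your proof is correct and follows essentially the same approach as the paper's: the low-dimensional case via the composition axiom (your projection trick is exactly what ``one easily verifies'' amounts to), and the high-dimensional case by induction using corner completion together with the $k$-step uniqueness property of $Y$. The paper organizes the high-dimensional induction slightly differently---inducting on the face dimension $n'$ inside a fixed $n$-cube rather than on $n$ itself---but the content is the same, and your version is in fact spelled out in more detail (in particular your remark on transporting uniqueness via the reflection $\omega \mapsto 1-\omega$).
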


\begin{proof}
    From the composition axiom (i) one easily verifies that if  $\phi$ preserves $k+1$-cubes, then it also preserves $n$-cubes for any $n \leq k+1$.  In the opposite direction, if $\phi$ preserves $k+1$-cubes and $n>k+1$, then $\phi$ maps an $n$-cube to a tuple $(y_\omega)_{\omega \in \{0,1\}^n}$ with the property that every $k+1$-dimensional face of this tuple is a $k+1$-cube.  Using the completion axiom (and the fact that $Y$ is $k'$-step for every $k' \geq k$) one easily then verifies by induction that every $n'$-dimensional face of this tuple is a $n'$-cube for $k+1 \leq n' \leq n$; setting $n'=n$ gives the claim.
\end{proof}

If $F \colon X \to Z$ is a map from a nilspace $X$ to an abelian group $Z = (Z,+)$, we can define the \emph{derivative} $dF \colon C^1(X) \to Z$ on the nilspace $C^1(X)$ by the formula
$$ dF(a,b) \coloneqq F(b)-F(a).$$
We can iterate this construction using Remark \ref{cube-nilspace} to define higher derivatives\footnote{In particular, we caution that $d$ does \emph{not} form a chain complex and should \emph{not}  be interpreted as an exterior derivative: $d^2 \neq 0$.} $d^k F \colon C^k(X) \to Z$ for any $k \geq 0$, with the convention $d^0 F = F$.  Explicitly, we have
$$ d^k F( (x_\omega)_{\omega \in \{0,1\}^k} ) = \sum_{\omega \in \{0,1\}^k} (-1)^{k-|\omega|} F(x_\omega).$$

Now we give a construction for extending a nilspace by a cocycle.

\begin{definition}[Cocycles on nilspaces] \label{nil-cocycle-def} \cite[Definition 2.14]{camarena-szegedy}
%\label{nil-cocycle-def} \cite[Definitions 3.3.14, 3.3.18]{candela0}
Let $X$ be a nilspace, $Z$ be an abelian group, and $k \geq 0$.  A \emph{$k$-cocycle on $X$ taking values in $Z$} is a function $\rho \colon C^{k+1}(X) \to Z$ obeying the following axioms:
\begin{itemize}
\item[(i)] (Symmetry)  If $(x_\omega)_{\omega \in \{0,1\}^{k+1}} \in C^{k+1}(X)$ is a $k+1$-cube in $X$, and $\sigma \colon \{0,1\}^{k+1} \to \{0,1\}^{k+1}$ is any map formed by permuting the $k+1$ coordinates, then
$$ \rho( (x_{\sigma(\omega)})_{\omega \in \{0,1\}^{k+1}} ) = \rho( (x_\omega)_{\omega \in \{0,1\}^{k+1}} ).$$
\item[(ii)]  (Cocycle)  If $x, y, z \in C^k(X)$ are $k$-cubes with\footnote{We denote by $(x,y)$ the $k+1$-cube which is the concatenation of the $k$-cubes $x,y$.} $(x,y), (y,z) \in C^1(C^k(X)) \equiv C^{k+1}(X)$ are $k+1$-cubes (which implies that $(x,z)$ is also a $k+1$-cube, thanks to Remark \ref{cube-nilspace}), then
$$ \rho(x,z) = \rho(x,y) + \rho(y,z).$$
\end{itemize}
We say that $\rho \colon C^{k+1}(X) \to Z$ is a \emph{$k$-coboundary on $X$ taking values in $Z$} if we have $\rho = d^{k+1} F$ for some $F \colon X \to Z$.
\end{definition}

\begin{example}  A $1$-cocycle is a map $\rho \colon C^2(X) \to Z$ obeying the symmetry axiom
$$ \rho( x_{00}, x_{01}, x_{10}, x_{11} ) = \rho( x_{00}, x_{10}, x_{01}, x_{11} ) $$
for all $(x_{00}, x_{01}, x_{10}, x_{11}) \in C^2(X)$, and the cocycle axiom
$$ \rho( x_0, x_1, z_0, z_1 ) = \rho( x_0, x_1, y_0, y_1) + \rho(y_0, y_1, z_0, z_1)$$
whenever $(x_0, x_1, y_0, y_1), (y_0, y_1, z_0, z_1) \in C^2(X)$.  A $1$-coboundary is a map $\rho \colon C^2(X) \to Z$ of the form
$$ \rho( x_{00}, x_{01}, x_{10}, x_{11} ) = F(x_{00}) - F(x_{01}) - F(x_{10}) + F(x_{11})$$
for all $(x_{00}, x_{01}, x_{10}, x_{11}) \in C^2(X)$.
\end{example}

It is easy to see that every $k$-coboundary is a $k$-cocycle; indeed, the collection of $k$-coboundaries forms a subgroup of the abelian group of $k$-cocycles.  However, it will be crucial for our main results that the converse is not always true, so that nilspaces can have non-trivial ``$k$-cohomology''.

\begin{remark}  Axiom (ii) and the nilspace axioms imply that $\rho(x,x)=0$ for all $x \in C^k(X)$, and that $\rho(x,y) = -\rho(y,x)$ for all $(x,y) \in C^{k+1}(X)$.  As a consequence, the symmetry axiom (i) is equivalent to the stronger axiom
$$ \rho( (x_{\theta(\omega)})_{\omega \in \{0,1\}^{k+1}} ) = (-1)^{r(\theta)} \rho( (x_\omega)_{\omega \in \{0,1\}^{k+1}} )$$
whenever $\theta \colon \{0,1\}^{k+1} \to \{0,1\}^{k+1}$ is a cube morphism and $r(\theta)$ is the number of $1$s in $\theta(0^{k+1})$ (informally, $r(\theta)$ is the number of face reflections needed to generate $\theta$).  This alternate formulation of axiom (i) is the one used in \cite[Definition 3.3.14]{candela0}.
\end{remark}

Now we introduce a key construction which originated from \cite[Proposition 3.1 and (5)]{camarena-szegedy}. 

\begin{proposition}[Skew products]\label{skewprod}  Let $k \geq 0$, let  $X$ be a $k$-step nilspace, and let $\rho \colon C^{k+1}(X) \to Z$ be a $k$-cocycle on $X$ taking values in an abelian group $Z$.  Then we can define a nilspace $X \rtimes^{(k)}_\rho Z$ to be the Cartesian product $X \times Z$ whose $n$-cubes for $n \geq 0$ consist of those tuples $((x_\omega,z_\omega))_{\omega \in \{0,1\}^n}$ for which $(x_\omega)_{\omega \in \{0,1\}^n}$ is an $n$-cube in $X$, and one has the equation
\begin{equation}\label{z}
 \sum_{\omega \in \{0,1\}^{k+1}} (-1)^{k+1-|\omega|} z_{\phi(\omega)} = \rho( (x_{\phi(\omega)})_{\omega \in \{0,1\}^{k+1}})
 \end{equation}
whenever $\phi \colon \{0,1\}^{k+1} \to \{0,1\}^n$ is a $k+1$-dimensional face of $\{0,1\}^n$ (this condition is vacuous when $n < k+1$).  If $X$ is $k$-step, then so is $X \rtimes^{(k)}_\rho Z$.

Finally, every $n$-cube $(x_\omega)_{\omega \in \{0,1\}^n}$ in $X$ has at least one lift $((x_\omega, z_\omega))_{\omega \in \{0,1\}^n}$ to an $n$-cube in $X \rtimes^{(k)}_\rho Z$.
\end{proposition}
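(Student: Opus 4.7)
The plan is to verify the three nilspace axioms of Definition \ref{nilspace-def}, the $k$-step property, and the lifting statement for $X' := X \rtimes^{(k)}_\rho Z$ in sequence; throughout, an $n$-cube in $X'$ is written as $((x_\omega, z_\omega))_\omega$. For the composition axiom, given a cube morphism $\phi : \{0,1\}^m \to \{0,1\}^n$ and $c \in C^n(X')$, the $X$-part of $c \circ \phi$ is automatically an $m$-cube. For each $(k+1)$-face $\psi$ of $\{0,1\}^m$, the composite $\phi \circ \psi : \{0,1\}^{k+1} \to \{0,1\}^n$ is a cube morphism; if it has full rank it is a signed $(k+1)$-face of $\{0,1\}^n$, and the extended symmetry from the remark after Definition \ref{nil-cocycle-def} transports the known \eqref{z} to $\psi$. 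If $\phi \circ \psi$ is rank-deficient, one input coordinate of the pulled-back cube is constant, so the $z$-side of \eqref{z} vanishes by pairwise cancellation and the $\rho$-side vanishes by $\rho(x,x) = 0$ (which is immediate from the cocycle axiom with $y = x$) together with symmetry. The $0$-ergodicity axiom is trivial, as \eqref{z} is vacuous for $n = 0$.

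For corner completion, given a corner $(x', z') : \{0,1\}^n \setminus \{1^n\} \to X \times Z$ with each $(n-1)$-face through $0^n$ an $(n-1)$-cube in $X'$, first complete $x'$ to an $n$-cube $x$ in $X$. Now $z_{1^n}$ must be chosen so that \eqref{z} holds on every $(k+1)$-face of $\{0,1\}^n$; any such face not through $1^n$ lies in an $(n-1)$-face through $0^n$ and so is handled by hypothesis and composition. The only new constraints come from $(k+1)$-faces through $1^n$: for $n \leq k$ there are none and $z_{1^n}$ is free; for $n = k+1$ there is a unique such face (the entire cube) and \eqref{z} determines $z_{1^n}$ uniquely, simultaneously yielding the $k$-step property.

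The hardest step is $n \geq k+2$, where multiple $(k+1)$-faces through $1^n$ each propose a value for $z_{1^n}$ and one must prove these agree. Summing \eqref{z} on the opposite coord-$i$ faces $G_i, F_i$ identifies the total alternating $Z$-sum $\sum_\omega (-1)^{n - |\omega|} z_\omega$ with $\rho(F_i) - \rho(G_i)$; it therefore suffices to show this quantity is independent of $i$. For $n = k+2$ and any pair $i \neq j$, decompose the cube into four $k$-subcubes $A = G_i \cap G_j$, $B = G_i \cap F_j$, $C = F_i \cap G_j$, $D = F_i \cap F_j$, so that $G_i = (A,B)$, $F_i = (C,D)$, $G_j = (A,C)$, and $F_j = (B,D)$ as $(k+1)$-cubes; the pair $(A,D)$ is also a $(k+1)$-cube, obtained by pulling back along the diagonal cube morphism $(t, \omega') \mapsto (t, t, \omega')$ that places $t$ in coordinates $i$ and $j$. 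Two applications of the cocycle axiom along the chains $(A, B), (B, D)$ and $(A, C), (C, D)$ give $\rho(G_i) + \rho(F_j) = \rho(A, D) = \rho(G_j) + \rho(F_i)$, which is exactly the needed identity. For $n > k+2$, any two $(k+1)$-faces through $1^n$ are joined by a sequence of swap-moves, each confined to a common $(k+2)$-subface through $1^n$ on which the $n = k+2$ argument applies (the ``lower'' $(k+1)$-faces of that subface lie in $(n-1)$-faces through $0^n$, where \eqref{z} already holds).

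Finally, to lift a given $n$-cube $x \in C^n(X)$ to $X'$, I assign $z_{0^n} \in Z$ arbitrarily and fill in the remaining vertices in any order refining the partial order by weight, applying the corner completion just established to successively larger sub-cubes of $\{0,1\}^n$. The whole argument generalizes the familiar $k = 0$ rectangle identity $\rho(x_{00},x_{11}) = \rho(x_{00},x_{01}) + \rho(x_{01},x_{11}) = \rho(x_{00},x_{10}) + \rho(x_{10},x_{11})$, and aside from the sign-tracking in the $n = k+2$ consistency step the verification is routine.
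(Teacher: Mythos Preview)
Your proof is correct. The paper takes a shorter route: it cites \cite[Proposition 3.3.26]{candela0} for the nilspace axioms, deduces the $k$-step property directly from \eqref{z} on the identity face, and for the lifting explicitly sets $z_\omega = 0$ for $|\omega| \leq k$, $z_\omega = \rho((x_{\phi_\omega(\alpha)})_\alpha)$ for $|\omega| = k+1$, and then invokes the completion lemma \cite[Lemma 3.1.5]{candela0} to fill in the rest. Your argument is instead fully self-contained: you verify composition directly via the extended symmetry of $\rho$ and the degenerate-face cancellation, and you prove corner completion by the $(k+2)$-face consistency computation $\rho(G_i)+\rho(F_j)=\rho(A,D)=\rho(G_j)+\rho(F_i)$, which is exactly the content the paper outsources to Candela. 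Your inductive filling-in of the $z_\omega$ by weight for the lifting claim is equivalent to the paper's explicit formula followed by completion (and indeed, your procedure with the choice $z_\omega=0$ for $|\omega|\leq k$ reproduces the paper's construction). The trade-off is the expected one: the paper is terser but relies on the external reference, while your version exposes the role of the cocycle identity in making the overdetermined system for $z_{1^n}$ consistent.
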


\begin{proof}  The claim that $X \rtimes^{(k)}_\rho Z$ is a nilspace is \cite[Proposition 3.3.26]{candela0} (with slightly different notation).  The conclusion about the $k$-step nature of $X \rtimes^{(k)}_\rho Z$ follows from the $k$-step nature of $X$ and the equation \eqref{z} applied to the identity face $\phi \colon \{0,1\}^{k+1} \to \{0,1\}^{k+1}$, which constrains the final component $z_{1^{k+1}}$ of the $z_\omega$ in terms of the other components $z_\omega$ and the base $k+1$-cube $(x_\omega)_{\omega \in \{0,1\}^{k+1}}$.

To prove the final claim, we set $z_\omega \coloneqq 0$ for $|\omega| < k+1$, and whenever $|\omega| = k+1$ we set
$$
z_\omega \coloneqq \rho( (x_{\phi_\omega(\alpha)})_{\alpha \in \{0,1\}^{k+1}})$$
where $\phi_\omega \colon \{0,1\}^{k+1} \to \{0,1\}^n$ is the unique face map that sends $1^{k+1}$ to $\omega$.  The tuple $((x_\omega,z_\omega))_{|\omega| \leq k+1}$ then is an $n'$-cube on $X \rtimes^{(k)}_\rho Z$ when restricted to any $n'$-face in $\{ \omega \in \{0,1\}^n: |\omega| \leq k+1\}$ with $n' \leq k+1$.  By multiple applications of the completion axiom on the $k$-step nilspace $X \rtimes^{(k)}_\rho Z$ (or by \cite[Lemma 3.1.5]{candela0}), we may (uniquely) complete this tuple to an $n$-cube $((x_\omega,z_\omega))_{\omega \in \{0,1\}^n}$ on $X \rtimes^{(k)}_\rho Z$, whose first coordinates $x_\omega$ must agree with the original $n$-cube $(x_\omega)_{\omega \in \{0,1\}^n}$ on $X$ since $X$ is $k$-step.  This gives the claim.
\end{proof}

We refer to $X \rtimes^{(k)}_\rho Z$ as the \emph{$k$-skew product} of the nilspace $X$ and the abelian group $Z$ by the cocycle $\rho$.  The map $\pi \colon X \rtimes^{(k)}_\rho Z \to X$ defined by $\pi(x,z) \coloneqq x$ will be called the \emph{factor map}; it is immediate that this is a nilspace morphism.

\begin{example}  If $Z$ is an abelian group, then the $k$-step nilspace ${\mathcal D}^k(Z)$ (defined in the next section) can be thought of as the skew product $\mathrm{pt} \rtimes^{(k)}_0 Z$ of a point $\mathrm{pt}$ and $Z$ by the zero cocycle $0$.
\end{example}

\begin{example}  If $\rho = d^{k+1} F$ is a $k$-coboundary, then the skew product $X \rtimes^{(k)}_\rho Z$ is isomorphic as a nilspace to the product nilspace $X \times {\mathcal D}^k(Z) = X \rtimes_0 Z$, with the isomorphism defined by mapping $(x,z)$ to $(x,z-F(x))$.  More generally, adding or subtracting a $k$-coboundary from a cocycle does not affect the skew product up to nilspace isomorphism.  
\end{example}

\begin{remark} In \cite[Definition 3.3.13]{candela0}, a more abstract notion of a
%\footnote{In \cite{candela0}, $k$-cocycles, $k$-coboundaries, $k$-skew products, and $k$-extensions are referred to as degree $k$ cocycles, degree $k$ coboundaries, degree $k$ skew products, and degree $k$ extensions.}
$k$-extension of a nilspace $X$ is defined, and it is shown in \cite[Lemma 3.3.21]{candela0} that any such extension can be written as a $k$-skew product $X \rtimes^{(k)}_\rho Z$ for some $k$-cocycle after specifying a section of the extension; the $k$-coboundaries correspond to those extensions which are \emph{split}.  It is also shown in \cite[Theorem 3.2.19, Lemma 3.3.28]{candela0} that an ergodic $k$-step nilspace can be expressed as a tower
$$ \mathrm{pt} \rtimes^{(1)}_{\rho_1} Z_1 \rtimes^{(2)}_{\rho_2} Z_2 \dots \rtimes^{(k)}_{\rho_k} Z_k$$
of $k$ successive skew products with abelian groups $Z_1,\dots,Z_k$ (where we apply the skew product construction from left to right).  However, we will not need these results here.
\end{remark}

\subsection{Filtered abelian groups}

The nilspaces that we shall consider in this paper shall be constructed out of filtered abelian\footnote{One can also build nilspace structures out of non-abelian filtered groups, and in particular out of nilpotent groups; see for instance \cite[\S 2.2]{candela0}.  However, we will not need these more general nilspaces.} groups, and their extension by cocycles.  We first review the definition of a filtered abelian group.

\begin{definition}[Filtered abelian group] (see e.g., \cite[\S 6]{green-tao-nilmanifolds}) A \emph{filtered abelian group} $G = (G,(G_i)_{i \geq 0})$ is an abelian group $G = (G,+)$ (which we will usually think of as being discrete), equipped with a filtration
$$ G = G_0 \geq G_1 \geq G_2 \geq \dots$$
of subgroups $G_i$.  If $G_1=G_0=G$, we say that the filtered abelian group is \emph{ergodic}.

A \emph{filtered homomorphism} from one filtered group $G = (G,(G_i)_{i \geq 0})$ to another $H = (H,(H_i)_{i \geq 0})$ is a group homomorphism $\phi \colon G \to H$ such that $\phi(G_i) \leq H_i$ for all $i \geq 0$.  

If $G$ is a filtered group and $k \geq 0$, we define the \emph{$k^{\mathrm{th}}$ Host--Kra group $G^{[k]} \leq G^{\{0,1\}^k}$} of $G$ to be the filtered abelian group of tuples of the form
\begin{equation}\label{tup}
 ( \sum_{\alpha \in \{0,1\}^k} h_\alpha \prod_{i: \alpha_i=1} \omega_i  )_{\omega \in \{0,1\}^k}
 \end{equation}
where $h_\alpha \in G_{|\alpha|}$ for all $\alpha \in \{0,1\}^k$, where $|\alpha| \coloneqq \alpha_1+\dots+\alpha_n$, and with the subgroup $(G^{[k]})_i$ of the filtered abelian group $G^{[k]}$ defined to be the group of tuples of the form
\eqref{tup} with $h_\alpha \in G_{|\alpha|+i}$ for all $\alpha \in \{0,1\}^k$.  One can easily verify that $G^{[k]}$ is also a filtered abelian group.

If $G_i=\{0\}$ for $i > d$, we say that the filtered group $G$ is \emph{of degree at most $d$}.
An abelian group $G$ is given the \emph{degree $d$ filtration} for some $d \geq 0$ if $G_i = G$ for $i \leq d$ and $G_i = \{0\}$ for $i>d$, in which case we denote the associated filtered abelian group as ${\mathcal D}^d(G)$ (cf. \cite[Definition 2.2.30]{candela0}).
\end{definition}

\begin{example}\label{example-cubes}  After some routine relabeling, we have
$$G^{[0]} = G = \{ x: x \in G \},$$
\begin{equation}\label{g1}
G^{[1]} = \{ (x, x+h_1): x \in G; h_1 \in G_1 \}
\end{equation}
and
\begin{equation}\label{g2-param}
G^{[2]} = \{ (x, x+h_1, x+h_2, x+h_1+h_2+h_{12}): x \in G; h_1,h_2 \in G_1; h_{12} \in G_2 \}
\end{equation}
and
\begin{equation}\label{g3-param}
\begin{split}
G^{[3]} = &\{ (x, x+h_1, x+h_2, x+h_3, x+h_1+h_2+h_{12}, x+h_1+h_3+h_{23}, \\ & \quad x+h_2+h_3+h_{13}, x+h_1+h_2+h_3+h_{12}+h_{13}+h_{23}+h_{123}): \\ & \quad  x \in G; h_1,h_2,h_3 \in G_1; h_{12},h_{13},h_{23} \in G_2; h_{123} \in G_3 \}.
\end{split}
\end{equation}
In the case when $G$ has the degree $1$ filtration ${\mathcal D}^1(G)$, one can omit the $h_{12}, h_{13}, h_{23}, h_{123}$ terms in the above formulae.

From the construction one has a canonical identification \begin{equation}\label{gk-iter}
(G^{[d]})^{[n]} \equiv G^{[d+n]}
\end{equation}
of filtered abelian groups for any $d,n \geq 0$ defined by
$$ (g_\omega)_{\omega \in \{0,1\}^{d+n}} \equiv ((g_{(\omega,\omega')})_{\omega \in \{0,1\}^d})_{\omega' \in \{0,1\}^n}$$
for all $(g_\omega)_{\omega \in \{0,1\}^{d+n}} \in G^{[d+n]}$; compare with \eqref{nil-ident}.
\end{example}

Every filtered abelian group can be viewed as a nilspace.

\begin{lemma}[Filtered groups are nilspaces]\label{lem-filtgrpnilspace}  If $G = (G, (G_i)_{i\geq 0})$ is a filtered abelian group, then $G$ can be given the structure of a nilspace by setting $C^n(G) \coloneqq G^{[n]}$.  This  will be an ergodic nilspace if and only if $G$ is ergodic.  If $k \geq 0$, then $G$ is of degree at most $k$ as a filtered abelian group if and only if it is a $k$-step nilspace. 
\end{lemma}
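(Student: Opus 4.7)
The plan is to verify the three nilspace axioms from Definition \ref{nilspace-def} by direct computation on the parametrization \eqref{tup}, after which the two equivalences will fall out as corollaries. Axiom (ii) is immediate: setting $k=0$ in \eqref{tup}, one sees that $G^{[0]}$ consists of a single coefficient $h_\emptyset$ ranging over $G_0 = G$, so $C^0(G) = G$. For the composition axiom (i), I would decompose an arbitrary cube morphism $\phi \colon \{0,1\}^m \to \{0,1\}^n$ into generators---coordinate projections, duplications, face insertions fixing a coordinate at $0$ or $1$, and reflections $\omega_i \mapsto 1 - \omega_i$---and check case by case that each substitution into \eqref{tup} produces another expression of the same form with coefficients still satisfying $h'_\beta \in G_{|\beta|}$. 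The only subtle generator is reflection: swapping $\omega_1 \mapsto 1-\omega_1$ sends the coefficient pair $(h_{(0,\alpha')}, h_{(1,\alpha')})$ to $(h_{(0,\alpha')} + h_{(1,\alpha')}, -h_{(1,\alpha')})$, and the new $h'_{(0,\alpha')}$ lies in $G_{|\alpha'|}$ only because $h_{(1,\alpha')} \in G_{|\alpha'|+1} \subseteq G_{|\alpha'|}$; here is where the decreasing nature of the filtration is essential.

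The heart of the proof is the corner completion axiom (iii). Rewriting the product in \eqref{tup} as the indicator $1_{\alpha \leq \omega}$, we have $c(\omega) = \sum_{\alpha \leq \omega} h_\alpha$, and Möbius inversion on the Boolean lattice yields $h_\alpha = \sum_{\omega \leq \alpha}(-1)^{|\alpha|-|\omega|} c(\omega)$. Given a partial function $c' \colon \{0,1\}^n \setminus \{1^n\} \to G$ whose restriction to every $(n-1)$-face through $0^n$ is an $(n-1)$-cube, I would define $h_\alpha$ for each $\alpha \neq 1^n$ by applying this formula to $c'$; the formula is well posed because every $\omega \leq \alpha$ satisfies $\omega \neq 1^n$. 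To verify $h_\alpha \in G_{|\alpha|}$, pick any index $i$ with $\alpha_i = 0$ and restrict $c'$ to the $(n-1)$-face $F_i \coloneqq \{\omega : \omega_i = 0\}$ containing both $0^n$ and $\alpha$; applying the inversion formula inside $F_i$ to the $(n-1)$-cube $c'|_{F_i}$ recovers the same $h_\alpha$ and guarantees it lies in $G_{|\alpha|}$. Finally, choosing $h_{1^n} \in G_n$ arbitrarily (say $0$) and setting $c(1^n) \coloneqq \sum_{\alpha} h_\alpha$ produces a full $n$-cube $c \in G^{[n]}$ whose restriction to $\{0,1\}^n \setminus \{1^n\}$ is $c'$, as can be checked by reapplying inversion.

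With (i)--(iii) in hand, the remaining equivalences are immediate. For ergodicity, \eqref{g1} gives $C^1(G) = \{(x,x+h_1) : x \in G,\ h_1 \in G_1\}$, which coincides with $G^{\{0,1\}}$ precisely when $G_1 = G$. For the degree/step correspondence, the only freedom in the completion constructed above is the choice of $h_{1^n} \in G_n$, and this choice in turn determines $c(1^n)$ via $c(1^n) = \sum_\alpha h_\alpha$; hence the $n$-corner completion is unique iff $G_n = \{0\}$, so $G$ is a $k$-step nilspace iff $G_{k+1} = \{0\}$, iff $G$ has degree at most $k$. I expect no substantive obstacle: the argument is essentially bookkeeping around the parametrization \eqref{tup}, with Möbius inversion handling the only conceptually nontrivial step (reducing corner completion to the free choice of the single coefficient $h_{1^n}$).
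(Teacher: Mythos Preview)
Your argument is correct. The paper itself does not give a proof at all: it simply cites \cite[Proposition~2.2.8]{candela0} (which handles the general non-abelian case) and remarks that the ergodic hypothesis there can be dropped. What you have written out is essentially the standard abelian specialization of that argument, so there is no divergence in method---you have just supplied the details the paper chose to outsource. The M\"obius-inversion handling of corner completion, with the free parameter $h_{1^n}\in G_n$ controlling both existence and uniqueness, is exactly the mechanism underlying the cited result, and your treatment of the reflection generator correctly isolates where the nestedness $G_{i+1}\le G_i$ of the filtration enters.
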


\begin{proof}  See \cite[Proposition 2.2.8]{candela0} (which in fact proves this result even in the non-abelian case).  The proof in \cite{candela0} is written only in the ergodic case,  but an inspection of the arguments reveals that it also holds in the non-ergodic setting.
\end{proof}

\begin{remark}  If $G$ is a filtered abelian group, then we may potentially have defined two nilspace structures on $G^{[k]}$; one arising from applying the above lemma to the filtered abelian group $G^{[k]}$, and the other by applying the above lemma to $G$ and then using the nilspace structure on $n$-cubes $C^n(G)$ from Remark \ref{cube-nilspace}.  However, it is easy to see that these two nilspace structures coincide.
\end{remark}

In view of the above lemma, we can now define nilspace morphisms between filtered abelian groups.
  As it turns out, these nilspace morphisms have a nice characterisation in terms of difference operators.  If $G,H$ are (filtered) abelian groups and $h \in G$ is a shift, we define the shift operator $T^h$ and the difference operator $\partial_h$ on functions $f \colon G \to H$ by the formula
$$ T^h f(x) \coloneqq f(x+h)$$
and
$$ \partial_h f(x) \coloneqq f(x+h)-f(x),$$
thus $\partial_h = T^h - 1$.  Clearly these operators commute with each other, with $h \mapsto T^h$ being an action of $G$; we also  note the cocycle identity
\begin{equation}\label{cocycle-identity}
\partial_{h+k} = \partial_h + T^h \partial_k
\end{equation}
for any $h,k \in G$.

\begin{lemma}[Characterization of nilspace morphisms] \label{cubechar} Let $f \colon G \to H$ be a map from one filtered abelian group $G = (G,(G_i)_{i \geq 0})$ to another $H = (H,(H_i)_{i \geq 0})$.  Then $f$ is a nilspace morphism if and only if
$$ \partial_{h_1} \dots \partial_{h_l} f(x) \in H_{i_1+\dots+i_l}$$
    for all $l \geq 0$, $i_1,\dots,i_l \geq 0$, $x \in G$, and $h_j \in G_{i_j}$ for $j=1,\dots,l$.  In fact, it suffices to check this condition for $h_j \in E_{i_j}$, where for each $i$, $E_i$ is a set of generators for $G_i$.
\end{lemma}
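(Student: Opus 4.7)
\medskip

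\noindent\textbf{Proof plan.} I would prove the two directions separately, then reduce the full condition to generators as a bookkeeping step at the end.

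For the forward direction (morphism implies the derivative condition), the plan is to encode the data $(x,h_1,\dots,h_l)$ with $h_j \in G_{i_j}$ as a single Host--Kra cube in $G^{[N]}$, where $N := i_1+\dots+i_l$. Concretely, partition the coordinates $\{1,\dots,N\}$ into blocks $B_1,\dots,B_l$ of sizes $i_1,\dots,i_l$, and consider the cube
$$\left(x + \sum_{j=1}^{l} h_j \prod_{i \in B_j} \omega_i\right)_{\omega \in \{0,1\}^N} \in G^{[N]},$$
which is obtained by placing $h_j$ in the parametrization slot $h_\alpha$ corresponding to the indicator of $B_j$ (so $|\alpha|=i_j$, matching the filtration requirement $h_j \in G_{i_j}$). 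Applying the morphism $f$ yields a cube in $H^{[N]}$, whose top Möbius coefficient $\sum_{\omega} (-1)^{N-|\omega|} f(\cdot)$ therefore lies in $H_N$. A short combinatorial computation, using that $\prod_{i \in B_j}\omega_i \in \{0,1\}$ depends only on whether $\omega|_{B_j} = 1^{i_j}$, collapses this top coefficient to exactly $\partial_{h_1}\dots\partial_{h_l} f(x)$.

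For the converse direction I would prove the slightly strengthened statement: if $g\colon G \to H$ satisfies $\partial_{h_1}\dots\partial_{h_l} g(x) \in H_{i_1+\dots+i_l + s}$ for all $l \geq 1$, $x \in G$, $h_j \in G_{i_j}$, then for any $(x_\omega) \in G^{[n]}$ with $n \geq 1$,
$$\textstyle \sum_{\omega \in \{0,1\}^n} (-1)^{n-|\omega|} g(x_\omega) \in H_{n+s}.$$
Once this is established, taking $s=0$ and restricting to faces $\omega \le \alpha$ gives, via Möbius inversion, the full claim that $(f(x_\omega))_\omega \in H^{[n]}$. The proof is by induction on $n$: the base $n=1$ is immediate since $x_1-x_0 \in G_1$. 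For the inductive step, split $\omega = (\omega',\omega_n)$ and write the target sum as $\sum_{\omega'}(-1)^{n-1-|\omega'|}\partial_{d_{\omega'}} g(y_{\omega'})$, where $y_{\omega'}:=x_{(\omega',0)}$ forms an $(n-1)$-cube in $G$ and $d_{\omega'}:=x_{(\omega',1)}-x_{(\omega',0)} = h_{e_n} + k_{\omega'}$ with $h_{e_n} \in G_1$ (the ``linear'' part in the $n$-th direction) and $k_{\omega'} \in G_2$ (the higher-order part depending polynomially on $\omega'$, with coefficients $h_{(\beta,1)} \in G_{|\beta|+1}$). The cocycle identity $\partial_{h+k} = \partial_h + T^h \partial_k$ then decomposes the sum into two pieces. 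The first piece, $\sum_{\omega'}(-1)^{n-1-|\omega'|}\partial_{h_{e_n}} g(y_{\omega'})$, is handled by applying the inductive hypothesis to the function $\partial_{h_{e_n}} g$, which satisfies the hypothesis with shift $s+1$, giving a term in $H_{(n-1)+(s+1)} = H_{n+s}$.

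The second piece, involving $\partial_{k_{\omega'}}$ with $\omega'$-dependent shift, is the main obstacle: one cannot directly invoke the induction hypothesis because the shift depends on the position. I would resolve this by iteratively applying the cocycle identity to expand $k_{\omega'}$ term by term in its decomposition $\sum_{\beta \neq 0} h_{(\beta,1)} \prod_i \omega'^{\beta_i}$, so that each resulting contribution is an alternating sum of the form handled by the induction hypothesis, but with the shift $s$ increased by the filtration weight $|\beta|+1 \geq 2$ of the corresponding $h_{(\beta,1)}$. The bookkeeping shows each such piece ends up in $H_{n+s}$. Finally, the reduction to generators follows from the observation that, once $\partial_h g(x) \in H_d$ is known for $h \in E_d$ and all $x$, the cocycle identity $\partial_{h+k} g = \partial_h g + T^h \partial_k g$ (together with $\partial_{-h} g = -T^{-h}\partial_h g$) shows this property is closed under addition and negation of the shift, hence extends to all of $G_d$; iterating through all derivative levels gives the full iterated condition.
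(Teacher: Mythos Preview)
Your proposal is correct. The paper itself does not give a self-contained proof but defers to \cite[Theorem B.3, Proposition B.8]{gtz} and \cite[Theorem 2.2.14]{candela0}; your argument is essentially the standard one found in those references. The forward direction via the block-cube construction is exactly right, and your inductive scheme for the converse (strengthening by a shift parameter $s$ and peeling off one coordinate at a time via the cocycle identity) is the standard approach.

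One small point worth making precise in the second piece: after expanding $\partial_{k_{\omega'}}$ via iterated cocycle, the $r$-th term $\partial_{c_{\beta^{(r)}}(\omega')} g$ vanishes unless $\omega' \geq \beta^{(r)}$, so the surviving sum is over the \emph{face} $\{\omega' \geq \beta^{(r)}\}$, which is an $(n-1-|\beta^{(r)}|)$-cube (one should also check that the translated base points $y_{\omega'} + h_{e_n} + \sum_{r'<r} c_{\beta^{(r')}}(\omega')$ still form a valid Host--Kra cube on that face, which they do since $h_{(\beta^{(r')},1)} \in G_{|\beta^{(r')}|+1} \leq G_{|\beta^{(r')} \cap J|}$). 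The induction hypothesis then applies at dimension $n-1-|\beta^{(r)}|$ with shift $s+|\beta^{(r)}|+1$, yielding $H_{(n-1-|\beta^{(r)}|)+(s+|\beta^{(r)}|+1)} = H_{n+s}$ as you claim. When $|\beta^{(r)}| = n-1$ the face is a single point; this case falls outside your stated base $n \geq 1$ but is immediate from the $l=1$ hypothesis on $g$.
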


\begin{proof} See \cite[Theorem B.3, Proposition B.8]{gtz} or \cite[Theorem 2.2.14]{candela0} (the latter statement is written in the ergodic case, but the proof extends without difficulty to the non-ergodic setting).
\end{proof}

As one corollary of this lemma, we see that the space $\Hom_\Box(G \to H)$ of nilspace morphisms from one filtered abelian group $G$ to another $H$ is an abelian group, which contains the space of filtered homomorphisms from $G$ to $H$ as a subgroup.  In fact $\Hom_\Box(G \to H)$ naturally has the structure of a filtered abelian group, in a manner consistent with the nilspace structure on $\Hom_\Box(G \to H)$ already constructed in Remark \ref{cube-nilspace}: see \cite[Proposition B.6]{gtz}.  The translation operators $x \mapsto x+h$ on $G$ are also nilspace morphisms for any $h \in H$.

\subsection{Polynomials}

We now define the notion of a polynomial:

\begin{definition}[Polynomials]\label{poly-def}  If $X$ is nilspace, $H$ is an abelian group, and $d \geq 0$, a \emph{polynomial of degree at most $d$} from $X$ to $H$ is a nilspace morphism from $X$ to ${\mathcal D}^d(H)$.  When $X$ is a filtered abelian group $G$, we can equivalently define a polynomial by requiring that
$$ \partial_{h_1} \dots \partial_{h_l} P = 0$$
whenever $i_1,\dots,i_l \geq 0$ are such that $i_1+\dots+i_l > d$, and $h_j \in G_{i_j}$ for $j=1,\dots,l$; see \cite[Theorem 2.2.14]{candela0} for a proof of this equivalence.
The space of such polynomials will be denoted $\Poly^d(X \to H)$, thus
$$ \Poly^d(X \to H) \equiv \Hom_\Box(X \to {\mathcal D}^d(H)).$$
In particular, $\Poly^d(X \to H)$ is an abelian group, and when $X$ is a filtered abelian group it acquires a translation action $h \mapsto T^h$ of $G$.  If $H = \T$, we abbreviate $\Poly^d(X \to \T)$ as $\Poly^d(X)$, and refer to elements of $\Poly^d(X)$ as \emph{non-classical polynomials of degree at most $d$} on $X$.  By convention, we set $\Poly^d(X \to H) = \{0\}$ for $d<0$.  For an abelian group $G$, we often abbreviate $\Poly^d({\mathcal D}^1(G) \to H)$ as $\Poly^d(G \to H)$ (and $\Poly^d({\mathcal D}^1(G))$ as $\Poly^d(G)$).
\end{definition}

From the definitions we see that we can define polynomials recursively on a filtered abelian group $G$: a map $P \colon G \to H$ lies in $\Poly^d(G \to H)$ if and only if $\partial_h P \in \Poly^{d-i}(G \to H)$ for all $i \geq 1$ and $h \in G_i$.  We remark that \emph{classical polynomials} correspond to the case when $H$ is a field $\F$, and $G$ is a vector space over that field (equipped with the degree $1$ filtration).

\begin{remark} The space of polynomials $\Poly^d(G)$ in a filtered abelian group $G$ is sensitive to the filtration structure on $G$. For instance, the function $P \colon \Z/2\Z \to \T$ defined by $P(x) \coloneqq x/2$ is a polynomial of degree $1$ if $\Z/2\Z$ is given the degree $1$ filtration ${\mathcal D}^1(\Z/2\Z)$, but is a polynomial of degree $2$ if $\Z/2\Z$ is instead given the degree $2$ filtration ${\mathcal D}^2(\Z/2\Z)$.  Informally, the difference operator $\partial_1$ is a first-order operator in the former case, but a second-order operator in the latter case.
\end{remark}

If $P \colon G \to H$ is a map from a filtered abelian group $G$ to an abelian group $H$, recall from Section \ref{nilspace-sec} that we can define derivatives $d^k P \colon G^{[k]} \to H$ for any $k \geq 0$.
By expanding all the definitions, we obtain a familiar-looking relationship between polynomials and derivatives:

\begin{proposition}[Polynomials and derivatives]\label{polyderiv}  Let $P \colon G \to H$ be a map from a filtered abelian group $G$ to an abelian group $H$.  If $k \geq -1$, then $P$ is a polynomial of degree at most $k$ if and only if $d^{k+1} P = 0$.  In particular, for $k \geq 0$, we see that $P$ is a polynomial of degree at most $k$ if and only if $dP$ is a polynomial of degree at most $k-1$.
\end{proposition}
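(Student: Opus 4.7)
The plan is to prove the equivalence directly by matching the alternating-sum definition of $d^{k+1} P$ against the iterated-difference characterization of $\Poly^k(G \to H)$ from Lemma \ref{cubechar}, with the main calculation carried out on explicit test cubes in $G^{[k+1]}$ parameterized via \eqref{tup}. The ``In particular'' clause will then follow immediately from the identification $(G^{[1]})^{[k]} \equiv G^{[k+1]}$ in \eqref{gk-iter}.

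For the forward direction, suppose $P \in \Poly^k(G \to H)$, so $P$ is a nilspace morphism $G \to \mathcal{D}^k(H)$. Then for any $(x_\omega)_\omega \in G^{[k+1]}$, the tuple $(P(x_\omega))_\omega$ lies in $(\mathcal{D}^k(H))^{[k+1]}$, and by \eqref{tup} has an expansion $P(x_\omega) = \sum_\alpha h'_\alpha \prod_i \omega_i^{\alpha_i}$ with $h'_\alpha \in H$ for $|\alpha|\leq k$ and $h'_{1^{k+1}} = 0$ (because $(\mathcal{D}^k(H))_{k+1} = \{0\}$). Factoring across coordinates,
$$\sum_{\omega \in \{0,1\}^{k+1}} (-1)^{k+1-|\omega|} \prod_i \omega_i^{\alpha_i} = \prod_i \alpha_i,$$
which vanishes unless $\alpha = 1^{k+1}$, giving $d^{k+1} P((x_\omega)) = h'_{1^{k+1}} = 0$.

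For the converse, suppose $d^{k+1} P = 0$. By Lemma \ref{cubechar} it suffices to show $\partial_{h_1}\cdots\partial_{h_l} P(x) = 0$ whenever $h_j \in G_{i_j}$ and $\sum_j i_j > k$; shifts with $i_j = 0$ can be commuted outside and applied last, so I will assume $i_j \geq 1$. If $l \leq k+1$, I partition $\{1,\dots,k+1\}$ into non-empty blocks $B_1,\dots,B_l$ with $|B_j| \leq i_j$ and $\sum |B_j| = k+1$ (feasible because $\sum i_j \geq k+1$ and each $i_j \geq 1$), and consider the test cube $x_\omega \coloneqq x + \sum_{j=1}^l h_j \prod_{i \in B_j} \omega_i$ in $G^{[k+1]}$ (valid because $h_j \in G_{i_j} \leq G_{|B_j|}$). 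The main piece of bookkeeping, which I view as the only mildly tricky step, is to verify by grouping $\omega$ according to $\sigma_j \coloneqq \prod_{i \in B_j} \omega_i$ and using $\sum_{\omega_i \in \{0,1\}}(-1)^{\omega_i} = 0$ inside each block with $\sigma_j = 0$ that
$$ d^{k+1} P((x_\omega)) = \sum_{\sigma \in \{0,1\}^l} (-1)^{l-|\sigma|} P\Bigl(x + \sum_{j=1}^l \sigma_j h_j\Bigr) = \partial_{h_1}\cdots\partial_{h_l} P(x). $$
If instead $l > k+1$, the specialization to the first $k+1$ derivatives (all lying in $G_1$) already forces $\partial_{h_1}\cdots\partial_{h_{k+1}} P \equiv 0$, and the remaining derivatives preserve this vanishing.

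For the ``In particular'' clause, splitting $\omega \in \{0,1\}^{k+1}$ into its first coordinate and the remaining $k$ under the identification $(G^{[1]})^{[k]} \equiv G^{[k+1]}$ from \eqref{gk-iter} shows that $d^{k+1} P$ agrees with $d^k(dP)$ on $G^{[k+1]}$. Applying the main equivalence at level $k-1$ to $dP \colon G^{[1]} \to H$ then yields the chain $P \in \Poly^k(G \to H) \iff d^{k+1} P = 0 \iff d^k(dP) = 0 \iff dP \in \Poly^{k-1}(G^{[1]} \to H)$.
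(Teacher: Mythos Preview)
Your proof is correct and carries out precisely the ``expanding all the definitions'' that the paper leaves to the reader (the paper provides no explicit argument for this proposition). The forward direction via the parameterization \eqref{tup} of $(\mathcal D^k(H))^{[k+1]}$, the converse via carefully chosen test cubes in $G^{[k+1]}$ built from a block partition, and the ``In particular'' clause via \eqref{gk-iter} are exactly the natural steps. One tiny remark: the degenerate case $k=-1$ (where $\Poly^{-1}=\{0\}$ and $d^0P=P$) is not covered by your Lemma~\ref{cubechar} argument as written, but it is trivially true by convention and worth a one-line mention.
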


As one application of this proposition, we have the following familiar-looking result about multiplication of polynomials (cf. \cite[Exercise 1.6.10]{tao2012higher}):

\begin{lemma}[Products of polynomials]\label{prod}  Let $G$ be a filtered abelian group, and let $R$ be a ring.  If $P_1 \colon G \to R$, $P_2 \colon G \to R$ are polynomials of degree at most $d_1,d_2$ respectively, then $P_1 P_2 \colon G \to R$ is a polynomial of degree at most $d_1+d_2$.
\end{lemma}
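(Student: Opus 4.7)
The plan is to verify polynomiality of $P_1 P_2$ directly via iterated finite differences, using a discrete Leibniz rule. By the characterization of polynomials on a filtered abelian group in Definition \ref{poly-def}, it suffices to show that $\partial_{h_1} \dots \partial_{h_l}(P_1 P_2) = 0$ whenever $l \geq 0$, $h_j \in G_{i_j}$, and $i_1 + \dots + i_l > d_1 + d_2$.

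The key tool is the Leibniz identity
$$ \partial_h(uv) = (T^h u)(\partial_h v) + (\partial_h u) v, $$
valid for any $u, v \colon G \to R$ and $h \in G$; this follows at once from the definition of $\partial_h$ and uses only the ring structure of $R$ (no commutativity is required, since the order $u$-on-left, $v$-on-right is preserved throughout). A routine induction on $l$, branching at each stage according to whether the new $\partial_{h_j}$ plays the role of ``derivative on $v$, translating $u$'' or ``derivative on $u$,'' then yields the iterated identity
$$ \partial_{h_1} \dots \partial_{h_l}(P_1 P_2) = \sum_{B \subseteq \{1,\dots,l\}} \bigl(T^{h_B}\, \partial^{B^c} P_1\bigr)\bigl(\partial^B P_2\bigr), $$
where $h_B \coloneqq \sum_{j \in B} h_j$ (with $h_\emptyset \coloneqq 0$) and $\partial^B \coloneqq \prod_{j \in B} \partial_{h_j}$ (the empty product being the identity operator).

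To conclude, fix $h_j \in G_{i_j}$ with $\sum_j i_j > d_1 + d_2$ and let $B \subseteq \{1,\dots,l\}$. Pigeonholing the inequality $\sum_{j \in B^c} i_j + \sum_{j \in B} i_j > d_1 + d_2$ forces either $\sum_{j \in B^c} i_j > d_1$ or $\sum_{j \in B} i_j > d_2$. In the former case, $\partial^{B^c} P_1 = 0$ by the polynomial property of $P_1$, and translation by $h_B$ preserves this vanishing; in the latter case, $\partial^B P_2 = 0$ similarly. Either way the $B$-th summand vanishes, so the entire sum is zero, completing the proof. The whole argument is routine bookkeeping with finite differences; the only point that deserves mild attention is the precise form of the iterated Leibniz identity, and in particular the verification that it remains valid when $R$ is non-commutative.
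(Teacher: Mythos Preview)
Your proof is correct and follows essentially the same approach as the paper: both arguments rest on the discrete Leibniz rule for $\partial_h(P_1 P_2)$. The paper writes it in the three-term form $(\partial_h P_1)P_2 + P_1(\partial_h P_2) + (\partial_h P_1)(\partial_h P_2)$ and then inducts on the combined degree $d_1+d_2$, whereas you use the two-term form and unfold the iterated differences into an explicit sum over subsets $B$ before applying pigeonhole; these are just two routine ways of packaging the same computation.
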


\begin{proof}  Observe the Leibniz rule
\begin{equation}\label{leibniz}
 \partial_h (P_1 P_2) = (\partial_h P_1) P_2 + (T^h P_1) \partial_h P_2
 = (\partial_h P_1) P_2 + P_1 \partial_h P_2 + (\partial_h P_1) \partial_h P_2
 \end{equation}
for any $h \in G$.  The claim now follows by induction on the combined degree $d_1+d_2$.
\end{proof}

If $G$ is a filtered abelian group which is also an elementary abelian $2$-group, then by \eqref{2h} we have $2\partial_h = -\partial_h^2$ for any $h \in G$.  When combined with Proposition \ref{polyderiv}, this gives

\begin{proposition}[Doubling lowers degree in $2$-groups]\label{double-prop}  Let $G$ be a filtered abelian group that is also an elementary abelian $2$-group, and let $H$ be an abelian group.  If $P \in \Poly^k(G \to H)$ for some $k \geq 1$, then $2P \in \Poly^{k-1}(G \to H)$.
\end{proposition}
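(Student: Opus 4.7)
The plan is a short calculation leveraging the identity $2\partial_h = -\partial_h^2$ on an elementary abelian $2$-group, which is already flagged in the paragraph preceding the proposition.

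First I would translate everything into the difference-operator characterization from Lemma \ref{cubechar} (or the equivalent condition in Definition \ref{poly-def}): showing $2P \in \Poly^{k-1}(G \to H)$ amounts to verifying that $\partial_{h_1}\cdots\partial_{h_l}(2P) = 0$ for every $l \geq 0$ and every choice of $h_j \in G_{i_j}$ with $i_1 + \cdots + i_l > k-1$, i.e.\ $i_1 + \cdots + i_l \geq k$. Since $k \geq 1$, any such tuple must contain at least one index $j$ with $i_j \geq 1$; after reindexing I may assume $i_1 \geq 1$.

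Next I would invoke the $2$-group identity. Because $2h = 0$ in $G$, direct expansion gives
\[
\partial_{h_1}^2 P(x) = P(x+2h_1) - 2P(x+h_1) + P(x) = -2\partial_{h_1} P(x),
\]
so $2\partial_{h_1} = -\partial_{h_1}^2$ as operators on maps $G \to H$. Since the $\partial_h$'s commute and pull the scalar $2$ past them, I then compute
\[
\partial_{h_1}\partial_{h_2}\cdots\partial_{h_l}(2P) \;=\; 2\,\partial_{h_1}\partial_{h_2}\cdots\partial_{h_l} P \;=\; -\,\partial_{h_1}^2\partial_{h_2}\cdots\partial_{h_l} P.
\]
The right-hand side is a derivative indexed by the sequence of shifts $h_1, h_1, h_2, \ldots, h_l$, lying in subgroups $G_{i_1}, G_{i_1}, G_{i_2}, \ldots, G_{i_l}$ whose indices sum to $2i_1 + i_2 + \cdots + i_l \geq i_1 + k \geq k+1 > k$. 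By the hypothesis $P \in \Poly^k(G \to H)$ (again using the difference-operator characterization), this derivative vanishes, and hence so does $\partial_{h_1}\cdots\partial_{h_l}(2P)$. This verifies the required condition and completes the argument.

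There is no real obstacle here: the identity $2\partial_h = -\partial_h^2$ does all the work, and the only minor point to watch is ensuring that at least one of the $i_j$ is positive so that promoting $\partial_{h_1}$ to $\partial_{h_1}^2$ actually pushes the total degree above $k$; this is automatic whenever $\sum i_j \geq k \geq 1$.
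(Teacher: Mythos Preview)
Your argument is correct and is essentially the same as the paper's: both hinge on the identity $2\partial_h = -\partial_h^2$ (equation \eqref{2h}) to trade the factor of $2$ for an extra derivative, then appeal to the difference-operator characterization of $\Poly^k$. Your write-up is in fact a careful unpacking of what the paper merely sketches in one sentence.
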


In fact this property holds in the larger class of $2$-homogeneous filtered abelian groups, but we will not need to establish this fact here.

In the case of non-classical polynomials on a finite-dimensional vector space $\F_2^n$ over the field of two elements, we have an explicit description of such polynomials:

\begin{lemma}[Explicit description of polynomials]\label{explicit-desc}  Let $n \geq 0$ and $d \geq 0$.  Then a function $P \colon \F_2^n \to \T$ is of degree at most $d$ if and only if it takes the form
$$ P(x_1,\dots,x_n) = \alpha + \sum_{k=1}^d \sum_{1 \leq i_1 < \dots < i_k \leq n} \frac{c_{i_1,\dots,i_k} |x_{i_1}| \dots |x_{i_k}|}{2^{d+1-k}} \mod 1
 $$
 for all $x_1,\dots,x_n \in \F_2$ and some $0 \leq \alpha < 1$ and some integers $0 \leq c_{i_1,\dots,i_k} < 2^{d+1-k}$, where $|x| \coloneqq 1_{x=1}$.  The coefficients $\alpha$ and $c_{i_1,\dots,i_k}$ are uniquely determined.  Indeed we have
 $$ \alpha = P(0) \mod 1$$
 and
 $$ \frac{c_{i_1,\dots,i_k}}{2^{d+1-k}} = \partial_{i_1} \dots \partial_{i_k} P(0) \mod 1.$$
 \end{lemma}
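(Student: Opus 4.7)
The proof proceeds in three stages. First, I verify each monomial $M_I(x) \coloneqq |x_{i_1}| \dots |x_{i_k}|/2^{d+1-k}$ (with $I = \{i_1 < \dots < i_k\}$, $k = |I|$) lies in $\Poly^d(\F_2^n \to \T)$. A direct single-variable computation in $\Z$ gives $\partial_{e_i}^m |x_i| = (-2)^{m-1}(1 - 2|x_i|)$ for $m \geq 1$, while $\partial_{e_j}|x_i| = 0$ for $j \neq i$. Applying $\partial_{e_{j_1}} \dots \partial_{e_{j_{d+1}}}$ to $M_I$, the result vanishes unless every $j_s$ lies in $I$; in that case the product structure (the $|x_{i_s}|$ depend on disjoint variables) gives an expression of the form $(-2)^{(d+1) - k'} \cdot \varepsilon(x)/2^{d+1-k}$, where $k'$ is the number of distinct directions used and $\varepsilon(x) \in \{-1, 0, 1\}$. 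Since $k' \leq k$, we have $(d+1) - k' \geq d+1 - k$, so the quotient is integer-valued and vanishes modulo $1$. By Lemma \ref{cubechar} (applied with the generator set $\{e_1, \dots, e_n\}$ of $G_1$), this proves $M_I \in \Poly^d(\F_2^n \to \T)$. The same computation at $x = 0$ yields $\partial_{e_J} M_I(0) = \delta_{J, I}/2^{d+1-|I|}$ for nonempty $J$ and $M_I(0) = 0$, which immediately yields both coefficient formulas stated in the lemma and the uniqueness of the integer coefficients in any such representation.

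For the existence direction, given $P \in \Poly^d(\F_2^n \to \T)$, set $\alpha \coloneqq P(0) \mod 1$ and seek, for each nonempty $I$ with $|I| \leq d$, an integer $c_I \in \{0, 1, \dots, 2^{d+1-|I|} - 1\}$ satisfying $c_I / 2^{d+1-|I|} \equiv \partial_{e_I} P(0) \pmod 1$. The principal obstacle is showing $\partial_{e_I} P(0)$ actually lies in $\frac{1}{2^{d+1-|I|}}\Z/\Z$, so that $c_I$ is well-defined. This follows from iterating the identity $2\partial_h = -\partial_h^2$ on $\F_2^n$ (equation \eqref{2h}), which gives
$$2^{d+1-|I|} \, \partial_{e_I} = (-1)^{d+1-|I|} \, \partial_{e_{i_1}}^{d+2-|I|} \partial_{e_{i_2}} \dots \partial_{e_{i_{|I|}}},$$
a composition of exactly $d+1$ difference operators. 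By Proposition \ref{polyderiv}, this operator annihilates $P$ identically, so evaluating at $x = 0$ yields $2^{d+1-|I|} \partial_{e_I} P(0) = 0$ in $\T$, as required.

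With the coefficients defined, let $Q$ denote the right-hand side of the lemma's displayed expression evaluated on these $c_I$. Both $P$ and $Q$ are polynomials of degree $\leq d$, and by construction together with the first stage's derivative formula they have equal $\partial_{e_J}$ derivatives at $0$ for every $J \subseteq \{1, \dots, n\}$ (derivatives of order $> d$ vanish for both). The Taylor-style identity $f(x) = \sum_{J \subseteq \mathrm{supp}(x)} \partial_{e_J} f(0)$, valid for any $f \colon \F_2^n \to \T$ via the factorization $T^x = \prod_{i : x_i = 1}(1 + \partial_{e_i})$ applied to $f$ at $0$, then forces $P = Q$ pointwise, completing the proof.
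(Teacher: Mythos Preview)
Your proof is correct and fully self-contained. The paper itself does not prove this lemma directly but defers to \cite[Lemma 1.7(iii)]{taoziegler} and calls the coefficient identities ``a routine calculation.'' Your argument supplies exactly that routine calculation: you verify the monomials lie in $\Poly^d$ via Lemma~\ref{cubechar} on generators, extract the torsion constraint on derivative values from the identity $2\partial_h = -\partial_h^2$, and close with the finite Taylor expansion $f(x) = \sum_{J \subseteq \operatorname{supp}(x)} \partial_{e_J} f(0)$. This is more explicit than the paper's citation, at the cost of being longer; both routes are standard and there is no substantive mathematical difference.
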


 \begin{proof} This follows from \cite[Lemma 1.7(iii)]{taoziegler}, with the latter identities following from a routine calculation. There is an analogous classification of polynomials in other characteristics than $2$, but we will only need the characteristic two theory here.
 \end{proof}

 One quick corollary of this lemma is the \emph{exact roots property}
 \begin{equation}\label{double}
 \Poly^d(\F_2^n) = 2 \cdot \Poly^{d+1}(\F_2^n)
 \end{equation}
 for all $d \geq 0$, refining Proposition \ref{double-prop} in this case; thus, every polynomial $P$ of degree $d$ can be expressed in the form $P=2Q$ for some polynomial $Q$ of degree $d+1$, and conversely if $Q$ is of degree $d+1$ then $2Q$ is of degree $d$; see \cite[Lemma 1.7(v)]{taoziegler}.  In a similar spirit, we have

\begin{lemma}[Inverting $1+T^e$]\label{inv-lem}  Let $n \geq 1$, let $e$ be a non-zero vector in $\F_2^n$, let $d \in \Z$, and let $P \colon \F_2^n \to \T$ be a polynomial of degree at most $d$ with $\partial_e P = 0$.  Then one can write $P = (1+T^e) Q$ where $Q \colon \F_2^n \to \T$ is a polynomial of degree at most $d+1$.
\end{lemma}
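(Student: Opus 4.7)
The plan is to construct $Q$ explicitly in coordinates, using the parametrization of polynomials on $\F_2^n$ provided by Lemma \ref{explicit-desc}. First I would isolate the constant term: write $P = \alpha + P'$, where $\alpha = P(0) \in \T$ and $P' \in \Poly^d(\F_2^n)$ takes values in $\frac{1}{2^d}\Z/\Z$ with $P'(0) = 0$. Since $\partial_e$ annihilates constants, $\partial_e P' = \partial_e P = 0$, so $P'$ is also $e$-invariant.

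Next I would choose a basis in which $e = e_n$, and view the $e$-invariant polynomial $P'$ as a polynomial $\tilde P' \in \Poly^d(\F_2^{n-1})$ in the first $n-1$ coordinates. Lemma \ref{explicit-desc} expresses $\tilde P'$ as a sum of monomials $\frac{c_{i_1,\dots,i_k}|x_{i_1}|\dots|x_{i_k}|}{2^{d+1-k}}$ with $1 \leq i_1 < \dots < i_k \leq n-1$. Multiplying each such monomial by the extra factor $|x_n|$ produces a function
$$Q_1(x) \coloneqq \sum_{k=1}^{d} \sum_{1 \leq i_1 < \dots < i_k \leq n-1} \frac{c_{i_1,\dots,i_k}|x_{i_1}|\dots|x_{i_k}||x_n|}{2^{d+1-k}},$$
and rewriting the denominator as $2^{(d+1)+1-(k+1)}$ lets Lemma \ref{explicit-desc} identify $Q_1$ as a polynomial of degree at most $d+1$. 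By construction $Q_1$ vanishes on $\{x_n = 0\}$ and agrees with $P'$ on $\{x_n = 1\}$, so $(1+T^e)Q_1 = Q_1 + T^e Q_1 = P'$, using the $e$-invariance of $P'$ to equate the two summands on the two halves of $\F_2^n$.

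To handle the constant $\alpha$, I would use that $\T$ is divisible: pick any $\beta \in \T$ with $2\beta = \alpha$ and set $Q \coloneqq \beta + Q_1$. Then $Q \in \Poly^{d+1}(\F_2^n)$ as a sum of a constant and a degree-$(d+1)$ polynomial, and $(1+T^e)Q = 2\beta + P' = \alpha + P' = P$, as required.

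The construction is essentially mechanical; the only genuine wrinkle is that $\alpha$ need not lie in $\frac{1}{2^{d+1}}\Z/\Z$, so one cannot simply invoke the exact-roots identity \eqref{double} to invert multiplication by $2$ on the constant part. Divisibility of $\T$ is what lets us absorb the constant, and it is the only ingredient beyond the explicit description in Lemma \ref{explicit-desc}.
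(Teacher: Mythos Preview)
Your proof is correct and follows essentially the same route as the paper: reduce to $e=e_n$ by a change of basis, use Lemma~\ref{explicit-desc} to expand the $e$-invariant polynomial in monomials not involving $x_n$, and build $Q$ by appending a factor $|x_n|$ to each monomial and halving the constant term. The paper simply writes $\alpha/2$ (with $\alpha$ chosen in $[0,1)$) where you invoke divisibility of $\T$ to pick $\beta$ with $2\beta=\alpha$; these are the same step. One small omission: you should note the trivial case $d<0$ (where $P=0$ and $Q=0$ works), since Lemma~\ref{explicit-desc} is stated for $d\geq 0$.
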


\begin{proof} If $d<0$ then $P$ vanishes and we can simply take $Q=0$.  Hence we may assume $d\geq 0$.  Applying a change of variables we may assume $e=e_n$ is the final generator of $\F_2^n$. 
By Lemma \ref{explicit-desc}, we can write the $e_n$-invariant polynomial $P$ explicitly as
 $$ P(x_1,\dots,x_n) = \alpha + \sum_{k=1}^d \sum_{1 \leq i_1 < \dots < i_k \leq n-1} \frac{c_{i_1,\dots,i_k} |x_{i_1}| \dots |x_{i_k}|}{2^{d+1-k}} \hbox{ mod } 1
 $$
 for all $x_1,\dots,x_n \in \F_2$ and some $0 \leq \alpha < 1$ and some integers $0 \leq c_{i_1,\dots,i_k} < 2^{d+1-k}$, where $|x| = 1_{x=1}$.  We then define $Q(x_1,\dots,x_n)$ explicitly by the formula
 $$ Q(x_1,\dots,x_n) = \frac{\alpha}{2} + \sum_{k=1}^d \sum_{1 \leq i_1 < \dots < i_k \leq n-1} \frac{c_{i_1,\dots,i_k} |x_{i_1}| \dots |x_{i_k}| |x_n|}{2^{d+1-k}} \hbox{ mod } 1.
 $$
 From Lemma \ref{explicit-desc} again, $Q$ is a polynomial of degree at most $d+1$, and the identity $P = (1+T^e) Q$ follows from direct calculation.
\end{proof}

We will use the following stability property for polynomials on nilspaces, which we phrase in the setting of finite nilspaces as this is all we will need here.

\begin{theorem}[Stability of polynomials]\label{poly-stable}  For every $k \geq 0$ and $\eps>0$ there exists $\delta>0$ such that if $X$ is a finite ergodic nilspace, and $\phi \colon X \to \T$ is a function such that
$$ \left|e\left( \sum_{\omega \in \{0,1\}^{k+1}} (-1)^{k+1-|\omega|} \phi(x_\omega) \right) - 1 \right| \leq \delta$$
for at least $1-\delta$ of the $k+1$-cubes $(x_\omega)_{\omega \in \{0,1\}^{k+1}}$ in $X$, then there exists a polynomial $P \in \Poly^k(X)$ such that
$$ \E_{x \in X} |e(\phi(x)) - e(P(x))| \leq \eps.$$
\end{theorem}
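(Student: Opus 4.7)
The plan is to reduce Theorem \ref{poly-stable} to a ``$99\%$ inverse theorem'' for the natural Gowers--Host--Kra uniformity norm on finite ergodic nilspaces, and then to establish that inverse theorem by induction on $k$ (or, alternatively, by a compactness/ultraproduct argument that exploits the finiteness of $X$ in each instance while extracting uniformity in $X$).

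First I would unwind the hypothesis. Since $|e(\alpha)-1|^2 = 2(1 - \cos(2\pi\alpha))$, the pointwise bound $|e(d^{k+1}\phi(c)) - 1| \leq \delta$ on a $(1-\delta)$-fraction of cubes, combined with the trivial bound $|e(\alpha) - 1| \leq 2$ everywhere, yields (after averaging and taking real parts)
\[ \E_{c \in C^{k+1}(X)} e\bigl(d^{k+1}\phi(c)\bigr) \geq 1 - O(\delta). \]
Writing $f \coloneqq e(\phi)$, and using $|f|=1$ to convert $(-1)^{k+1-|\omega|}$ powers to complex conjugates, this is exactly the statement that $\|f\|_{U^{k+1}(X)}^{2^{k+1}} \geq 1 - O(\delta)$, where the left-hand side denotes the natural nilspace analogue of the Gowers norm, $\E_{c \in C^{k+1}(X)} \prod_\omega \mathcal C^{|\omega|} f(c_\omega)$.

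Next I would reduce to the statement: for any finite ergodic nilspace $X$ and any $f \colon X \to \C$ with $|f|=1$ and $\|f\|_{U^{k+1}(X)} \geq 1 - \delta'$, there exists $P \in \Poly^k(X)$ with $\E_x |f(x) - e(P(x))| \leq \eps'(\delta')$, where $\eps'(\delta')\to 0$ as $\delta'\to 0$ \emph{uniformly in $X$}. I would attack this by induction on $k$. The base case $k=0$ reads $|\E f| \geq 1-\delta'$ with $|f|=1$, which by Cauchy--Schwarz forces $f$ to be close in $L^1$ to the constant $\E f/|\E f| = e(\alpha)$; take $P$ to be that constant $\alpha$. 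For the inductive step, I would use the nilspace Cauchy--Schwarz--Gowers inequality, together with the cube identification \eqref{nil-ident} in Remark \ref{cube-nilspace}, to express $\|f\|_{U^{k+1}(X)}^{2^{k+1}}$ as an average over $(x,y) \in C^1(X)$ of $\|g_{x,y}\|_{U^k(\cdot)}^{2^k}$ for suitable ``derivative'' functions $g_{x,y}$; Markov's inequality then places most such fibers in the inductive hypothesis range, producing degree $(k{-}1)$ polynomial approximants $P_{x,y}$ to the $g_{x,y}$. Integrating these approximants back up along the skew-product tower described in Proposition \ref{skewprod} and the subsequent remark, one recovers a degree $k$ polynomial $P$ on $X$ that approximates $\phi$; the cocycle identity \eqref{cocycle-identity} controls the compatibility of the fiberwise approximants.

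The main obstacle is uniformity in $X$: for a single fixed finite $X$ the conclusion is immediate, since $\phi \mapsto d^{k+1}\phi$ is a continuous homomorphism of compact abelian groups $\T^X \to \T^{C^{k+1}(X)}$ with kernel $\Poly^k(X)$, and one invokes continuity of the inverse on the image. The delicate point is to make the modulus of continuity depend only on $k$ and not on $X$, and this is precisely where the inductive Cauchy--Schwarz/nilspace argument is needed. If one prefers to sidestep the inductive bookkeeping, a cleaner but less quantitative route is a compactness/ultraproduct argument: assume the conclusion fails, extract a sequence $(X_n,\phi_n)$ with $\delta_n\to 0$ violating the conclusion with parameter $\eps$, pass to an ultraproduct nilspace $X_\infty$ with ultralimit function $\phi_\infty$ satisfying $d^{k+1}\phi_\infty = 0$ almost surely (by \L o\'s), conclude that $\phi_\infty$ agrees almost everywhere with a polynomial of degree $\leq k$ on $X_\infty$, and then descend via \L o\'s to contradict the failure at large $n$. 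Either route suffices; the inductive approach is more elementary, while the ultraproduct approach is more compact and aligns more naturally with the style of the rest of the paper.
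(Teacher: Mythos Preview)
The paper does not prove this theorem from scratch: it simply observes that the statement is a special case of \cite[Theorem 4.2]{candela-szegedy-inverse} (with target nilspace ${\mathcal D}^k(\T)$), invoking the uniform probability measure on $C^n(X)$ as the Haar measure. So your proposal is attempting something the authors deliberately outsource.

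Your initial reduction is correct and useful: the hypothesis does give $\|e(\phi)\|_{U^{k+1}(X)}^{2^{k+1}} \geq 1 - O(\delta)$ for the natural cube-averaged Gowers quantity. The gap is in the inductive step. Passing from $X$ to $C^1(X)$ via \eqref{nil-ident} yields a function $g(x,y)=f(y)\overline{f(x)}$ with large $U^k(C^1(X))$ norm, but $C^1(X)$ is \emph{not ergodic} once $X$ has step $\geq 2$, so your inductive hypothesis (stated for ergodic nilspaces) does not apply directly. Decomposing into ergodic components and applying the hypothesis fiberwise produces degree $k{-}1$ polynomials on each component, and the real work is then to show these glue to a single $Q \in \Poly^{k-1}(C^1(X))$ and, further, that any such $Q$ is approximately of the form $dP$ for some $P \in \Poly^k(X)$. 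This last step is an ``approximate cocycle is approximately a coboundary'' statement on nilspaces, uniformly in $X$; your appeal to Proposition \ref{skewprod} and \eqref{cocycle-identity} addresses only the exact (algebraic) case and does not supply the uniform approximate version. That approximate integration step is precisely the substance of the Candela--Szegedy result the paper cites.

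Your ultraproduct alternative is closer in spirit to a complete argument, but it is not self-contained either: once you pass to the ultralimit you have a compact (not finite) nilspace $X_\infty$ with a Loeb-type measure, and the assertion ``$d^{k+1}\phi_\infty = 0$ almost everywhere $\Rightarrow$ $\phi_\infty$ agrees a.e.\ with some $P \in \Poly^k(X_\infty)$'' is itself a measurable rigidity theorem on compact nilspaces that requires the Candela--Szegedy machinery (and the descent of $P$ back to the $X_n$ likewise). So this route ultimately lands on the same citation the paper uses.
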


\begin{proof} This is a special case of \cite[Theorem 4.2]{candela-szegedy-inverse} (with $Y$ the compact nilspace ${\mathcal D}^k(\T)$ with metric $d(x,y) \coloneqq |e(x)-e(y)|$), noting that for a finite ergodic nilspace we can use the uniform probability measure on $C^n(X)$ as a Haar measure on that space.
\end{proof}

\subsection{$p$-homogeneous nilspaces}

The following definition was introduced in \cite{CGSS}:

\begin{definition}[$p$-homogeneity]\label{homogeneity-def}\cite[Definitions 1.2, 3.1]{CGSS}
  Let $p$ be a prime.  A nilspace $X$ is said to be \emph{$p$-homogeneous} if, whenever $n \geq 0$ and $f \colon {\mathcal D}^1(\Z^n) \to X$ is a nilspace morphism, then the periodization $\tilde f \colon {\mathcal D}^1(\F_p^n) \to X$, defined by restricting $f$ to $\{0,\dots,p-1\}^n$ and then extending periodically, is also a nilspace morphism.

  A nilspace $X$ is said to be \emph{weakly $p$-homogeneous} if, for every $n$-cube $(x_\omega)_{\omega \in \{0,1\}^n} \in C^n(X)$ for some $n \geq 0$, there exists a nilspace morphism $\tilde f \colon {\mathcal D}^1(\F_p^n) \to X$ such that $\tilde f(\omega) = x_\omega$ for all $\omega \in \{0,1\}^n$ (viewing $\{0,1\}^n$ as a subset of $\F_p^n$).  
\end{definition}

In \cite[Remark 3.3]{CGSS} it is noted that $p$-homogeneity implies weak $p$-homogeneity, and that the two concepts are equivalent when $p=2$.  In \cite[Question 3.4]{CGSS} it is posed as an open question whether these two concepts are equivalent for $p>2$; we do not address this question here.  From this remark, we see that $X$ is $2$-homogeneous (or equivalently, weakly $2$-homogeneous), if and only if we have a bijection
\begin{equation}\label{nbox}
 C^n(X) \equiv \Hom_\Box( {\mathcal D}^1(\F_2^n) \to X)
\end{equation}
for any $n \geq 0$, where we identify maps from $\F_2^n$ to $X$ with tuples in $X^{\{0,1\}^n}$ by identifying $\{0,1\}^n$ with $\F_2^n$.  From this identification we obtain the following consequence:

\begin{lemma}[Preserving $2$-homogeneity]\label{homog-preserv}  Let $k \geq 0$, let $X$ be a $2$-homogeneous $k$-step nilspace, and let $X \rtimes^{(k)}_\rho Z$ be a $k$-skew product of that nilspace with an elementary abelian $2$-group $Z$.  Then $X \rtimes^{(k)}_\rho Z$ is $2$-homogeneous if and only if, for any $n \geq 0$, every nilspace morphism $\phi \colon {\mathcal D}^1(\F_2^n) \to X$ has a lift $\tilde \phi \colon {\mathcal D}^1(\F_2^n) \to X \rtimes^{(k)}_\rho Z$, thus $\tilde \phi$ is a nilspace morphism with $\phi = \pi \circ \tilde \phi$, where $\pi \colon X \rtimes^{(k)}_\rho Z \to X$ is the factor map.
\end{lemma}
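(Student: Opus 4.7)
The plan is to use the bijection \eqref{nbox} in both directions: for a $2$-homogeneous target, $n$-cubes are interchangeable with nilspace morphisms from ${\mathcal D}^1(\F_2^n)$, and the statement asks exactly when $X \rtimes^{(k)}_\rho Z$ inherits this property. Throughout I write $\pi \colon X \rtimes^{(k)}_\rho Z \to X$ for the factor map and denote elements of the skew product as pairs $(x,z)$.

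For the forward direction, given $\phi \colon {\mathcal D}^1(\F_2^n) \to X$, I would restrict to $\{0,1\}^n$ to obtain an $n$-cube in $X$, apply the last assertion of Proposition~\ref{skewprod} to lift this cube to an $n$-cube in $X \rtimes^{(k)}_\rho Z$, and then invoke \eqref{nbox} for $X \rtimes^{(k)}_\rho Z$ (available by the $2$-homogeneity hypothesis) to promote this cube to a nilspace morphism $\tilde\phi \colon {\mathcal D}^1(\F_2^n) \to X \rtimes^{(k)}_\rho Z$. The composition $\pi \circ \tilde\phi$ is then a nilspace morphism into $X$ that agrees with $\phi$ on $\{0,1\}^n$, hence equals $\phi$ by injectivity in \eqref{nbox} for $X$.

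For the converse, given an $n$-cube $(x_\omega, z_\omega)_{\omega \in \{0,1\}^n}$ in $X \rtimes^{(k)}_\rho Z$, the plan is: extend $(x_\omega)_\omega$ to a morphism $\phi \colon {\mathcal D}^1(\F_2^n) \to X$ via \eqref{nbox} for $X$; use the hypothesis to lift $\phi$ to some $\tilde\phi \colon {\mathcal D}^1(\F_2^n) \to X \rtimes^{(k)}_\rho Z$; and then correct the possible mismatch in the $Z$-coordinate. If $\tilde\phi$ restricts to $(x_\omega, z'_\omega)_\omega$ on $\{0,1\}^n$, the first key observation is that $\omega \mapsto z_\omega - z'_\omega$ is an $n$-cube in ${\mathcal D}^k(Z)$: both tuples satisfy the skew-product identity \eqref{z} with the same right-hand side on every $(k+1)$-face, so the $\rho$ term cancels in the difference and the $(k+1)$-st alternating sums of $z - z'$ vanish on every such face, which is precisely the cube condition in the degree-$k$ filtration.

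The remaining step is to extend this cube of differences to a polynomial $P \in \Poly^k(\F_2^n \to Z)$ and then replace $\tilde\phi$ by the morphism with the same $X$-component and $Z$-component shifted by $P$. The shift preserves the morphism property because the cube relation \eqref{z} probes the $Z$-coordinates only through $(k+1)$-st alternating sums on $(k+1)$-faces, which annihilate any polynomial of degree $\leq k$; the corrected morphism then restricts exactly to $(x_\omega, z_\omega)_\omega$. The main obstacle is the extension step, which amounts to showing that ${\mathcal D}^k(Z)$ is itself $2$-homogeneous when $Z$ is an elementary abelian $2$-group — equivalently, that every $n$-cube of the degree-$k$ filtration of $Z$ is the restriction of some polynomial $\F_2^n \to Z$ of degree at most $k$. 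This should follow from Proposition~\ref{phom}, or alternatively be established directly via the classification in Lemma~\ref{explicit-desc} together with the $2$-torsion of $Z$.
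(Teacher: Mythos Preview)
Your proposal is correct and follows essentially the same route as the paper's proof. The only organizational difference is in the key step of showing that the difference $z_\omega - z'_\omega$ is a polynomial of degree at most $k$ on $\F_2^n$: the paper verifies this directly via the identity $\partial_{e_i}^2 = -2\partial_{e_i} = 0$ in the elementary abelian $2$-group $Z$ (extending the face condition from distinct indices to all indices), whereas you package this as the $2$-homogeneity of ${\mathcal D}^k(Z)$ and invoke Proposition~\ref{phom}; these are equivalent, since the hypothesis $2\cdot G_i \leq G_{i+1}$ of Proposition~\ref{phom} reduces exactly to $2Z=0$ at $i=k$. One small wording point: since $\{0,1\}^n$ is already all of $\F_2^n$, there is no genuine ``extension'' step---the content is precisely that the cube condition on the difference coincides with being a degree-$k$ polynomial, which is exactly \eqref{nbox} for ${\mathcal D}^k(Z)$.
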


\begin{remark} There is an analogue of this result for general $p$, but it is more difficult to prove; see \cite[Proposition 3.12]{CGSS}.
\end{remark}

\begin{proof}  We first prove the ``only if'' direction.  Suppose that $X \rtimes^{(k)}_\rho Z$ is $2$-homogeneous, and $\phi \colon {\mathcal D}^1(\F_2^n) \to X$ is a nilspace morphism.  By \eqref{nbox} for the $2$-homogeneous nilspace $X$, we may view $\phi$ as an $n$-cube on $X$, which has a lift to an $n$-cube on $X \rtimes^{(k)}_\rho Z$ by Proposition \ref{skewprod}.  Applying \eqref{nbox} again to the $2$-homogeneous nilspace $X \rtimes^{(k)}_\rho Z$, we obtain the claim.

Now we prove the ``if'' direction.  Let $n \geq 0$, and let $((x_\omega,z_\omega))_{\omega \in \{0,1\}^n} \in C^n(X \rtimes^{(k)}_\rho Z)$ be an $n$-cube in $X \rtimes^{(k)}_\rho Z$.  We would like to interpret this $n$-cube as a nilspace morphism from $\F_2^n$ to $X \rtimes^{(k)}_\rho Z$.  As $X$ is already $2$-homogeneous, we know that the $n$-cube $(x_\omega)_{\omega \in \{0,1\}^n}$ can already be identified with a nilspace morphism $\phi$ from $\F_2^n$ to $X$, which by hypothesis can be lifted to a nilspace morphism $\tilde \phi$ from $\F_2^n$ to $X \rtimes^{(k)}_\rho Z$.  In particular, we can write
\begin{equation}\label{tpz}
\tilde \phi(\omega) = (x_\omega, z_\omega + P(\omega))
\end{equation}
for all $\omega \in \{0,1\}^n$ and some map $P \colon \F_2^n \to Z$ (identifying $\{0,1\}^n$ with $\F_2^n$).  

Since $((x_\omega,z_\omega))_{\omega \in \{0,1\}^n} \in C^n(X \rtimes^{(k)}_\rho Z)$ is an $n$-cube, we have from \eqref{z} that
$$ \sum_{\omega \in \{0,1\}^{k+1}} (-1)^{k+1-|\omega|} z_{\iota(\omega)} = \rho( (x_{\iota(\omega)})_{\omega \in \{0,1\}^{k+1} )}$$
whenever $\iota \colon \{0,1\}^{k+1} \to \{0,1\}^n$ is a $k+1$-dimensional face of $\{0,1\}^n$.  As $(\tilde \phi(\omega))_{\omega \in \{0,1\}^n}$ is also an $n$-cube, the same statement is true with $z_\omega$ replaced by $z_\omega + P(\omega)$.  Subtracting, we conclude that
$$ \sum_{\omega \in \{0,1\}^{k+1}} (-1)^{k+1-|\omega|} P(\iota(\omega)) = 0$$
for all $k+1$-dimensional faces.  Equivalently, we have
$$ \partial_{e_{i_1}} \dots \partial_{e_{i_{k+1}}} P = 0$$
whenever $1 \leq i_1 < \dots < i_{k+1} \leq n$, where $e_1,\dots,e_n$ is the standard basis of $\F_2^n$.  For any $i=1,\dots,n$, we have
$$ \partial_{e_i} \partial_{e_i} = \partial_{2e_i} - 2 \partial_{e_i} = - 2 \partial_{e_i}$$
since $2e_i = 0$ (cf. \eqref{2h}); since $Z$ is assumed to be an elementary abelian $2$-group, we thus also have
$$ \partial_{e_{i_1}} \dots \partial_{e_{i_{k+1}}} P = 0$$
whenever two of the $i_1,\dots,i_{k+1}$ are equal.  We conclude that $P \in \Poly^k( \F_2^n \to Z)$.

Now let $(a_\omega)_{\omega \in \{0,1\}^{k+1}} \in C^{k+1}({\mathcal D}^1(\F_2^n))$ be a $k+1$-cube in $\F_2^n$ (with the degree $1$ filtration).  As $\tilde \phi$ is a nilspace morphism, $(\tilde \phi(a_\omega))_{\omega \in \{0,1\}^{k+1}}$ is a $k+1$-cube in $X \rtimes^{(k)}_\rho Z$, which in particular implies from \eqref{tpz} that
$$  \sum_{\omega \in \{0,1\}^{k+1}} (-1)^{k+1-|\omega|} 
(z_{a_\omega} + P(a_\omega)) = \rho( (x_{a_\omega})_{\omega \in \{0,1\}^{k+1}} ).$$
Since $P$ is a polynomial, we also have
$$  \sum_{\omega \in \{0,1\}^{k+1}} (-1)^{k+1-|\omega|} 
P(a_\omega) = 0;$$
subtracting, we conclude that
$$  \sum_{\omega \in \{0,1\}^{k+1}} (-1)^{k+1-|\omega|} 
z_{a_\omega} = \rho( (x_{a_\omega})_{\omega \in \{0,1\}^{k+1}} ).$$
As a consequence, we see that $(x_{a_\omega}, z_{a_{\omega}})_{\omega \in \{0,1\}^{k+1}}$ is a $k+1$-cube in $X \rtimes^{(k)}_\rho Z$.  Thus the map $a \mapsto (x_a, z_a)$ preserves $k+1$-cubes, and is thus a nilspace morphism from ${\mathcal D}^1(\F_2^n)$ to $X \rtimes^{(k)}_\rho Z$ thanks to Lemma \ref{k-suffices}.  This gives the claim.
\end{proof}

Finally, we remark that the notion of $p$-homogeneity greatly simplifies in the case of ergodic filtered abelian groups:

\begin{proposition}[$p$-homogeneous filtered abelian groups]\label{phom}  Let $G$ be a filtered ergodic abelian group, and $p$ a prime.  Then $G$ is $p$-homogeneous if and only if $p \cdot G_i \leq G_{i+1}$ for all $i \geq 1$.
\end{proposition}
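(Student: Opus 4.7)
The plan is to treat the two implications of the equivalence separately, relying in both directions on Lemma \ref{cubechar} to reformulate ``nilspace morphism'' as the collection of constraints $\partial_{e_{j_1}} \cdots \partial_{e_{j_l}} \phi \in G_l$ over coordinate-basis shifts, together with the cocycle identity \eqref{cocycle-identity}.

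\emph{Necessity} is the easier half. Given any $i \geq 1$ and $b \in G_i$, I will construct an explicit morphism $f \colon \mathcal{D}^1(\Z^i) \to G$ whose periodization, via $p$-homogeneity, forces $pb \in G_{i+1}$. The natural candidate is the multilinear function $f(x_1, \ldots, x_i) = x_1 \cdots x_i \cdot b$: its iterated differences vanish whenever an index repeats (since $f$ is affine in each variable), and otherwise reduce to a product of the remaining variables with $b \in G_i \subseteq G_l$, so Lemma \ref{cubechar} gives the morphism property. After applying $\partial_{e_2}, \ldots, \partial_{e_i}$ to the periodization $\tilde f \colon \F_p^i \to G$ along the slice $(x_1, 0, \ldots, 0)$, only the all-ones sub-corner contributes and what remains is the function $x_1 \mapsto x_1 \cdot b$ on $\F_p$. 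A direct computation of $\partial_{e_1}^2$ at $x_1 = p-1$, using the wrap-around $\tilde f(p) = \tilde f(0) = 0$ and $\tilde f(p+1) = \tilde f(1) = b$, produces
\[
b - 0 + (p-1)\cdot b \;=\; pb,
\]
and $p$-homogeneity places this in $G_{i+1}$. Since $b \in G_i$ was arbitrary, $p G_i \leq G_{i+1}$.

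\emph{Sufficiency} is more intricate. Assume $pG_i \leq G_{i+1}$ and let $f \colon \mathcal{D}^1(\Z^n) \to G$ be a morphism with periodization $\tilde f = f \circ \sigma$, where $\sigma \colon \F_p^n \to \{0, \ldots, p-1\}^n$ is the canonical section. I plan to verify, via Lemma \ref{cubechar}, that $\partial_{e_{j_1}} \cdots \partial_{e_{j_l}} \tilde f(y) \in G_l$ for all choices of shifts and base points. The central algebraic ingredient will be the binomial expansion
\[
\partial_{p e_j} \;=\; (1 + \partial_{e_j})^p - 1 \;=\; \sum_{k=1}^{p} \binom{p}{k}\, \partial_{e_j}^k,
\]
together with the divisibility $p \mid \binom{p}{k}$ for $1 \leq k \leq p-1$. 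Combined with the morphism property $\partial_{e_j}^k f \in G_k$, the hypothesis then yields $\partial_{p e_j} f \in G_2$, and iterating gives $\partial_{e_{i_1}} \cdots \partial_{e_{i_{l-1}}} \partial_{p e_j} f \in G_{l+1}$. I will then expand $\partial_{e_{j_1}} \cdots \partial_{e_{j_l}} \tilde f(y)$ as an alternating sum of $\tilde f$-values at the vertices of the appropriate $l$-cube, lift each vertex via $\sigma$ into $\{0,\ldots,p-1\}^n$, and rewrite the wrap-around discrepancy $\sigma(y+v)-(\sigma(y)+v)\in p\Z^n$ using the cocycle identity \eqref{cocycle-identity} as a sum of $\partial_{p e_j}$-corrections. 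The leading contribution is $\partial_{e_{j_1}} \cdots \partial_{e_{j_l}} f(\sigma(y)) \in G_l$, and every correction contains at least one factor of $\partial_{p e_j}$ and thus lies in $G_l$ by the preceding observation.

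The main technical obstacle is the combinatorial bookkeeping of these wrap-around corrections: for each coordinate and each vertex of the $l$-cube, a ``carry'' may or may not be generated, and the carries must be tracked and correctly paired against the filtration levels gained from the $p$-factors. The cleanest way I see to manage this is to prove, by induction on $l$, the slightly stronger statement that the defect $\partial_{e_{j_1}} \cdots \partial_{e_{j_l}} \tilde f(y) - \partial_{e_{j_1}} \cdots \partial_{e_{j_l}} f(\sigma(y))$ in fact lies in $G_{l+1}$ rather than merely $G_l$; this extra slack allows the induction to close, with each derivative traded for a factor of $p$ via the binomial identity above.
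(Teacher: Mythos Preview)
The paper does not supply its own proof here; it simply cites \cite[Theorem~1.4]{CGSS}. Your necessity argument is correct and clean: the multilinear map $f(x)=x_1\cdots x_i\cdot b$ is a morphism ${\mathcal D}^1(\Z^i)\to G$ by Lemma~\ref{cubechar}, and computing $\partial_{e_1}^2\partial_{e_2}\cdots\partial_{e_i}$ of its periodization at $(p-1,0,\ldots,0)$ yields exactly $pb$, which $p$-homogeneity then places in $G_{i+1}$.

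Your sufficiency argument has a genuine gap. The binomial identity and the observation that $\partial_{e_{i_1}}\cdots\partial_{e_{i_{l-1}}}\partial_{pe_j}f\in G_{l+1}$ are correct and are indeed the right tools, but the strengthened inductive hypothesis you propose --- that the defect
\[
\partial_{e_{j_1}}\cdots\partial_{e_{j_l}}\tilde f(y)-\partial_{e_{j_1}}\cdots\partial_{e_{j_l}}f(\sigma(y))
\]
lies in $G_{l+1}$ --- is false in general. Take $p=2$, $n=1$, $G=\Z/8\Z$ with filtration $G_1=G$, $G_2=2G$, $G_3=4G$, $G_4=0$ (so $pG_i\le G_{i+1}$ holds), and $f(x)=x\bmod 8$. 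Then $\partial_1^2\tilde f(0)=2\tilde f(0)-2\tilde f(1)=-2\equiv 6$ while $\partial_1^2 f(\sigma(0))=\partial_1^2 f(0)=0$, so the defect is $6\notin G_3=\{0,4\}$ (though it does lie in $G_2$, which is what is actually needed). The induction as proposed therefore cannot close: in the inductive step the new defect decomposes as the difference of two old defects plus a carry correction, and while the carry correction lands in $G_{l+1}$, the difference of two elements of $G_l$ has no reason to gain a filtration level. A more workable route, at least when the indices $j_1,\ldots,j_l$ are distinct, is to observe that the carries act independently on each coordinate and write
\[
\partial_{e_{j_1}}\cdots\partial_{e_{j_l}}\tilde f(y)=\Bigl(\prod_{m=1}^{l} \partial_{(1-pc_m)e_{j_m}}\Bigr)f(\sigma(y)),\qquad c_m\in\{0,1\},
\]
then expand each factor via $\partial_{(1-p)e_j}=\partial_{e_j}-T^{(1-p)e_j}\partial_{pe_j}$ and check term by term that everything lands in $G_l$. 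The repeated-index case requires tracking higher-order carries and is not covered by your outline.
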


\begin{proof}  See \cite[Theorem 1.4]{CGSS}.  In fact the ergodicity hypothesis can be dropped here, but we will not need to use this fact.
\end{proof}

\section{Deducing the strong inverse conjecture from the BTZ conjecture}\label{app}

We now prove Theorem \ref{implication}, by refining the correspondence principle argument used in \cite{taozieglerhigh}. Our arguments here follow \cite{taozieglerhigh} fairly closely, and familiarity with that argument will be assumed here. 

Fix $p,k, \eta, \eps()$; all quantities below are permitted to depend on these parameters.     Suppose for contradiction that Conjecture \ref{btz-conj} was true, but Conjecture \ref{inv-conj-strong} failed for the indicated choice of $\eta, \eps()$.  Without loss of generality we may assume $\eps(m) \leq \frac{1}{m}$ (for instance). Then for every $M$, there exists $G = \F_p^n$ for some $n = n_{M}$ and a function $f = f_{M} \colon G \to \mathbb{D}$ with $\|f\|_{U^{k+1}(G)} \geq \eta$, such that if $h_1,\dots,h_M \in G$ are chosen independently and uniformly at random, then with probability greater than $1/2$, there does \emph{not} exist $1 \leq m \leq M$ and $P \in \Poly^k(G)$ and a function $F \colon \C^{\F_p^M} \to \C$
of Lipschitz constant at most $M$, such that
$$|\E_{x \in G} f(x) e(-P(x))| \geq \frac{1}{m}$$
and
$$ |\E_{x \in G} e(P(x)) - F( (f(x+\sum_{i=1}^M a_i h_i))_{(a_1,\dots,a_M) \in \F_p^M})| \leq \eps(m).$$
We use the following construction of a \emph{sampling sequence} from \cite{taozieglerhigh}:

\begin{proposition}[Existence of accurate sampling sequence]\label{prop-sampling}  Let $\eps_0 > 0$.  Then there exists a sequence of scales
$$ 0 = H_0 < H_1 < \dots$$
such that for any $G = \F_p^n$ and $f \colon G \to \mathbb{D}$, if $v_1,v_2,v_3,\dots \in G$ are chosen uniformly and independently at random, then with probability at least $1-\eps_0$, the following ``accurate sampling'' statement holds: for every sequence
$$ 0 \leq r_0 < r_1 < r_2 < \dots < r_{k+1}$$
and every Lipschitz $F \colon \mathbb{D}^{\{0,1\}^{k+1} \times \F_p^{r_0}} \to \C$, we have
$$ \E_{x \in G} |F_{f, r_0,\dots,r_{k+1}}(x) - F_f(x)| \leq \frac{\|F\|_{\mathrm{Lip}}}{r_1}$$
where
$$ F_{f,r_0,\dots,r_{k+1}}(x) \coloneqq \E_{\vec a_1 \in \F_p^{H_{r_1}}, \dots, \vec a_{k+1} \in \F_p^{H_{r_{k+1}}}} F( (f(x + \omega \cdot {\mathbf u} + \vec b \cdot \vec v_0))_{\omega \in \{0,1\}^{k+1}, \vec b \in \F_p^{H_{r_0}}} )$$
with
$$ \mathbf{u} \coloneqq (\vec a_1 \cdot \vec v_1,\dots, \vec a_{k+1} \cdot \vec v_{k+1}); \quad \vec v_j \coloneqq (v_1,\dots,v_{H_{r_j}}), j=0,\dots,k+1$$
and
$$ F_{f,r_0}(x) \coloneqq \E_{h_1,\dots,h_{k+1} \in G} F((f(x+\omega \cdot \mathbf{h} + \vec b \cdot \vec v_0))_{\omega \in \{0,1\}^{k+1}, \vec b \in \F_p^{H_{r_0}}} )$$
where $\mathbf{h} \coloneqq (h_1,\dots,h_{k+1})$.
\end{proposition}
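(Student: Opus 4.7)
The plan is a standard probabilistic construction combining a per-tuple sampling estimate, a net argument for uniformity over Lipschitz $F$, and a union bound with rapidly growing scales $H_r$ chosen inductively.

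At the core is the single-tuple, single-function estimate. For fixed $(r_0,\dots,r_{k+1})$ and Lipschitz $F$: conditional on $\vec v_j$, the random variable $\vec a_j \cdot \vec v_j$ is uniform on the subspace $V_j := \mathrm{span}(v_1,\dots,v_{H_{r_j}}) \leq G$. On the event $V_{r_1} = G$, which forces $V_{r_j} = G$ for all $j \geq 1$, the joint law of $(\vec a_j \cdot \vec v_j)_{j=1}^{k+1}$ equals the uniform distribution on $G^{k+1}$, so $F_{f, r_0,\dots,r_{k+1}}(x) = F_f(x)$ pointwise. The complementary event has probability at most $p^{n - H_{r_1}}$ over the iid uniform $v_i$, which is already negligible once $H_{r_1} > n$. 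For the remaining range $H_{r_1} \leq n$, one combines a second-moment / Fourier computation in the $\vec a_j$ variables (noting that additive constants in $F$ cancel between $F_f$ and $F_{f,r_0,\dots,r_{k+1}}$, so only $\|F\|_{\mathrm{Lip}}$ controls the $L^1$-discrepancy) to obtain an expected $L^1$-error bounded by $\|F\|_{\mathrm{Lip}}$ times a quantity decaying in $H_{r_1}$.

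To pass to all Lipschitz $F$, I invoke standard metric entropy: Lipschitz-$L$ functions on the compact polydisk ${\mathcal D}^{\{0,1\}^{k+1}\times \F_p^{H_{r_0}}}$ admit a supremum-norm $\delta$-net of cardinality $\exp(\mathrm{poly}(L/\delta, p^{H_{r_0}}))$. Choosing $\delta$ of order $L/r_1$ reduces the target inequality for an arbitrary $F$ to its analog for a finite net, absorbing the discretization error into $\|F\|_{\mathrm{Lip}}/(2r_1)$. I then choose $H_{r+1}$ inductively so large that, by Markov's inequality and a union bound over all tuples $(r_0,\dots,r_{k+1})$ with $r_1 = r+1$ and all functions $F$ in the corresponding net, the total failure probability at this stage is at most $\eps_0 \cdot 2^{-(r+1)}$; summing over $r$ then yields the conclusion.

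The main obstacle is the required uniformity in $n$: the sequence $H_r$ is fixed independently of $n$, while the spanning bound $p^{n - H_{r_1}}$ worsens with $n$. The resolution is that for any fixed $n$, only the finitely many indices $r_1$ with $H_{r_1} \leq n$ require a non-spanning treatment, and the slack $1/r_1$ on the right-hand side together with the tower-like growth of $H_r$ is what accommodates them. This argument closely follows the sampling-sequence construction in Tao--Ziegler, refined here to track the explicit rate $\|F\|_{\mathrm{Lip}}/r_1$ demanded by the strong inverse conjecture.
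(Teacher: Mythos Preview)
The paper does not give its own argument here; it simply cites \cite[Proposition 3.13]{btz} and remarks that inspecting that proof upgrades the ``positive probability'' conclusion there to ``probability at least $1-\eps_0$''.  Your sketch reproduces the structure of that argument (a second-moment bound for fixed $F$ and fixed scales, a sup-norm net over Lipschitz functions on the finite-dimensional polydisk, and a union bound absorbed by choosing the $H_r$ to grow extremely rapidly), so it is essentially the same approach and broadly correct.

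One point deserves correction: the spanning dichotomy and the ``main obstacle'' paragraph are unnecessary and slightly misleading.  The second-moment computation---expanding $|\E_{\vec a_j}(\cdots)|^2$ into a double average over independent copies $\vec a_j,\vec a_j'$ and then taking the expectation over the random $v_i$---already yields a variance bound of the shape $O_{p,k}(p^{-c H_{r_1}})$ that is \emph{uniform in $n$}, because it relies only on the fact that a generic pair $(\vec a_j,\vec a_j')\in(\F_p^{H_{r_j}})^2$ is linearly independent (with failure probability $O(p^{1-H_{r_j}})$), and for such pairs $(\vec a_j\cdot\vec v_j,\vec a_j'\cdot\vec v_j)$ is exactly uniform on $G^2$ over the randomness of the $v_i$.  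Hence the second-moment estimate handles every $r_1$ regardless of how $H_{r_1}$ compares to $n$; the event $V_{r_1}=G$ is never invoked, and there is no $n$-dependence obstacle to resolve.
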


\begin{proof}  See \cite[Proposition 3.13]{taozieglerhigh} (with some mild relabeling, for instance replacing $k$ by $k+1$).  In that proposition the sampling property was only asserted to hold with positive probability, but an inspection of the proof shows that it can be established with probability at least $1-\eps_0$ for any fixed $\eps_0>0$.
\end{proof}

For each $M$, we apply the above proposition with $\eps_0 =1/2$, $n = n_{M}$, and $f = f_{M}$ to conclude that the claimed accurate sampling property holds for randomly chosen $v_1,v_2,\dots \in \F_p^n$ with probability at least $1/2$.  By combining this with the construction of $f_{M}$, we conclude that there exists (deterministic) $v_i = v_{i,M} \in \F_p^n$ for all $i \geq 1$ with the accurate sampling property, and also the property that there does \emph{not} exist $1 \leq m \leq M$, $P \in \Poly^k(G)$ and a function $F \colon \C^{\F_p^M} \to \C$
of Lipschitz constant at most $M$, such that
$$|\E_{x \in G} f(x) e(-P(x))| \geq \frac{1}{m}$$
and
$$ |\E_{x \in G} e(P(x)) - F( (f(x+\sum_{i=1}^M a_i v_i))_{(a_1,\dots,a_M) \in \F_p^M})| \leq \eps(m).$$

We fix this data for each $M$.  Following \cite{taozieglerhigh}, we now introduce the universal Furstenberg space $X \coloneqq \mathbb{D}^{\F_p^\omega}$ of functions $\zeta \colon \F_p^\omega \to \mathbb{D}$ with the product $\sigma$-algebra and shift action
$$ T^h \zeta(x) \coloneqq \zeta(x+h).$$
As in \cite[\S 4]{taozieglerhigh}, for each $M$, the above data generate an invariant probability measure $\mu_{M}$ on $X$ by the formula
$$ \mu_{M} \coloneqq \E_{x \in \F_p^{n_{M}}} \delta_{\zeta_{M,x}}$$
where $\zeta_{M,x} \in X$ is given by the formula
$$ \zeta_{M,x}( (a_i)_{i=1}^\infty ) \coloneqq f_{M}( x + \sum_{i=1}^\infty a_i v_{i,M} ).$$
By Prokhorov's theorem, we may restrict $M$ to a subsequence and assume that $\mu_M$ converges weakly to an invariant probability measure $\mu$.  Henceforth $X$ is understood to be endowed with $\mu$.

Let $f_\infty \colon X \to \mathbb{D}$ be the coordinate function
$$ f_\infty(\zeta) \coloneqq \zeta(0).$$
As noted in \cite[(4.3)]{taozieglerhigh}, we have the identity
\begin{equation}\label{43}
\begin{split}
&\int_X F( T_{\vec a_1} f_\infty, \dots, T_{\vec a_{l}} f_\infty)\ d\mu_M
\\
&\quad= \E_{x \in \F_p^{n_M}} F\left( f_M\left(x + \sum_{i=1}^\infty a_{1,i} v_{i,M}\right), \dots, f_M\left(x + \sum_{i=1}^\infty a_{l,i} v_{i,M} \right)\right)
\end{split}
\end{equation}
for any $l,m \geq 1$, any $\vec a_j = (a_{j,i})_{i=1}^\infty \in \F_p^\omega$ for $j=1,\dots,l$, and any continuous function $F \colon \mathbb{D}^\ell \to \C$.  This allows us to pass back and forth between integral expressions on $X$ (using the measure $\mu_M$) and combinatorial averages on $\F_p^{n_M}$.

In \cite[Lemma 4.2]{taozieglerhigh}, it is shown that the $\sigma$-algebra of $X$ is generated by $f_\infty$ and its shifts.  By \cite[Lemma 4.3]{taozieglerhigh} the identity \eqref{43} was used to show that $X$ is an ergodic $\F_p^\omega$-system; from \cite[Lemma 4.4]{taozieglerhigh} this identity was also used to show that
$$ \|f_\infty\|_{U^{k+1}(X)} \geq \eta.$$
Applying the hypothesis that Conjecture \ref{btz-conj} held, we can find $P \in \Poly^k(X)$ and some $m$ such that
$$\left|\int_X f_\infty e(-P)\ d\mu\right| > \frac{3}{m}$$
(say).   Let $c: \R^+ \to \R^+$ be a decreasing function to be chosen later (depending on $k,p$) such that $c(\eps) \to 0$ decays to zero sufficiently rapidly as $\eps \to 0$.  Then, as $X$ is generated by $f_\infty$ and its shifts, we see that there exists a natural number $M_0$ and shifts $\vec b_1,\dots, \vec b_{M_0} in \F_p^\omega$, and a Lipschitz function $F \colon \mathbb{D}^{M_0} \to \mathbb{D}$ of Lipschitz constant at most $M_0$ such that
\begin{equation}\label{sad}
 \int_X |e(P) - F( T_{\vec b_1} f_\infty, \dots, T_{\vec b_{M_0}} f_\infty)|\ d\mu < c(\eps(m))
 \end{equation}
so in particular by the triangle inequality (if $c$ decays rapidly enough)
$$ \left|\int_X f_\infty \overline{F}( T_{\vec b_1} f_\infty, \dots, T_{\vec b_{M_0}} f_\infty)\ d\mu\right| > \frac{2}{m}.$$
By vague convergence we thus have
$$\left|\int_X f_\infty \overline{F}( T_{\vec b_1} f_\infty, \dots, T_{\vec b_{M_0}} f_\infty)\ d\mu_M\right| \geq \frac{2}{m}.
$$
for arbitrarily large $M$ (in particular we can assume $M > M_0, m$).  Applying \eqref{43}, we conclude that
\begin{equation}\label{core}
\left| \E_{x \in \F_p^{n_M}} f_\infty(x)
\overline{F}( f_M(x + \sum_{i=1}^\infty b_{1,i} v_{i,M}), \dots, f_M(x+ \sum_{i=1}^\infty b_{M_0,i} v_{i,M})\right| \geq \frac{2}{m}.
\end{equation}

Now let $r_1$ be sufficiently large depending on $M_0$, $\vec b_1,\dots,\vec b_{M_0}$, and $c(\eps(m))$, and set $r_j \coloneqq r_1+j-1$ for $j=2,\dots,k+1$.  Using the triangle inequality as in the argument after \cite[(4.5)]{taozieglerhigh}, we conclude from \eqref{sad} that
$$ \E_{\vec a_1 \in \F_p^{H_{r_1}}, \dots, \vec a_{k+1} \in \F_p^{H_{r_{k+1}}}} \int_X |\Delta_{\vec a_1} \dots \Delta_{\vec a_{k+1}} F( T_{\vec b_1} f_\infty, \dots, T_{\vec b_{M_0}} f_\infty) - 1|\ d\mu_M \ll c(\eps(m))$$
for all sufficiently large $M$ along the indicated subsequence, where we use $X \ll Y$ to denote the estimate $X \leq CY$ for some $C$ depending only on $p,k$, and we use the notation $\Delta_a f(x) \coloneqq f(x+a) \overline{f(x)}$.  Continuing the argument after \cite[[(4.5)]{taozieglerhigh}, we can use   \eqref{43} and the accurate sampling sequence property to conclude (for $r_1$ large enough) that
\begin{align*}
    &
\E_{x, h_1,\dots,h_{k+1} \in \F_p^{n_M}} \left|\Delta_{h_1} \dots \Delta_{h_{k+1}} F\left( f_M\left(x + \sum_{i=1}^\infty b_{1,i} v_{i,M}\right), \dots, f_M\left(x+ \sum_{i=1}^\infty b_{M_0,i} v_{i,M}\right)\right) - 1\right| \\
&\quad \ll c(\eps(m))
\end{align*}
where the operators $\Delta_h$ are applied in the $x$ variable.
Applying \cite[Lemma 4.5]{taozieglerhigh} (or \cite[Theorem 4.2]{candela-szegedy-inverse}), and assuming that the function $c$ decays sufficiently rapidly, we may find a polynomial $P_M \in \Poly^k(\F_p^{n_M})$ such that
$$ \E_{x \in \F_p^{n_M}} \left| F\left( f_M\left(x + \sum_{i=1}^\infty b_{1,i} v_{i,M}\right), \dots, f_M\left(x+ \sum_{i=1}^\infty b_{M_0,i} v_{i,M}\right)\right) - e(P_M(x))\right| \leq \eps(m)$$
From this, \eqref{core}, and the triangle inequality (recalling that $\eps(m) \leq 1/m$) we conclude that
$$  |\E_{x \in \F_p^{n_M}} f_\infty(x) e(-P_M(x))| > \frac{1}{m}.$$
But this contradicts the construction of the sampling sequence $v_{i,M}$.  This concludes the proof of Theorem \ref{implication}.

\section*{Declarations}

Or Shalom was supported by NSF grant DMS-1926686 and ISF grant 3056/21. Terence Tao was supported by a Simons Investigator grant, the James and Carol Collins Chair, the Mathematical Analysis \& Application Research Fund, and by NSF grants DMS-1764034 and DMS-2347850. Asgar Jamneshan was funded by the Deutsche Forschungsgemeinschaft (DFG, German Research Foundation) - 547294463.

The authors have no relevant financial or non-financial interests to disclose. On behalf of all authors, the corresponding author states that there is no conflict of interest.


\begin{thebibliography}{10}

\bibitem{btz}
V.~Bergelson, T.~Tao, and T.~Ziegler.
\newblock An inverse theorem for the uniformity seminorms associated with the
  action of {$\mathbb{F}^\infty_p$}.
\newblock {\em Geom. Funct. Anal.}, 19(6):1539--1596, 2010.

\bibitem{camarena-szegedy}
O.~A. Camarena and B.~Szegedy.
\newblock Nilspaces, nilmanifolds and their morphisms.
\newblock Preprint, {arXiv}:1009.3825v3, 2012.

\bibitem{candela0}
P.~Candela.
\newblock Notes on nilspaces: algebraic aspects.
\newblock {\em Discrete Anal.}, 2017:59, 2017.
\newblock Id/No 15.

\bibitem{cgss-2023}
P.~Candela, D.~Gonz{\'a}lez-S{\'a}nchez, and B.~Szegedy.
\newblock Free nilspaces, double-coset nilspaces, and {Gowers} norms.
\newblock Preprint, {arXiv}:2305.11233, 2023.

\bibitem{CGSS}
P.~Candela, D.~Gonz{\'a}lez-S{\'a}nchez, and B.~Szegedy.
\newblock On higher-order {Fourier} analysis in characteristic {{\(p\)}}.
\newblock {\em Ergodic Theory Dyn. Syst.}, 43(12):3971--4040, 2023.

\bibitem{candela-szegedy-inverse}
P.~Candela and B.~Szegedy.
\newblock Regularity and inverse theorems for uniformity norms on compact
  abelian groups and nilmanifolds.
\newblock {\em J. Reine Angew. Math.}, 789:1--42, 2022.

\bibitem{diaconis}
P.~Diaconis.
\newblock {\em Group representations in probability and statistics}, volume~11
  of {\em IMS Lect. Notes, Monogr. Ser.}
\newblock Hayward, CA: Institute of Mathematical Statistics, 1988.

\bibitem{gkz}
P.~Gopalan, A.~R. Klivans, and D.~Zuckerman.
\newblock List-decoding {R}eed-{M}uller codes over small fields.
\newblock In {\em S{TOC}'08}, pages 265--274. ACM, New York, 2008.

\bibitem{milicevic}
W.~T. Gowers and L.~Mili{\'c}evi{\'c}.
\newblock A quantitative inverse theorem for the ${U}^4$ norm over finite
  fields.
\newblock Preprint, {arXiv}:1712.00241, 2017.

\bibitem{milicevic-2}
W.~T. Gowers and L.~Mili{\'c}evi{\'c}.
\newblock An inverse theorem for {Freiman} multi-homomorphisms.
\newblock Preprint, {arXiv}:2002.11667, 2020.

\bibitem{green-tao-inverseu3}
B.~Green and T.~Tao.
\newblock An inverse theorem for the {G}owers {$U^3(G)$} norm.
\newblock {\em Proc. Edinb. Math. Soc. (2)}, 51(1):73--153, 2008.

\bibitem{gt-ap}
B.~Green and T.~Tao.
\newblock The primes contain arbitrarily long arithmetic progressions.
\newblock {\em Ann. of Math. (2)}, 167(2):481--547, 2008.

\bibitem{green-tao-finite}
B.~Green and T.~Tao.
\newblock The distribution of polynomials over finite fields, with applications
  to the {G}owers norms.
\newblock {\em Contrib. Discrete Math.}, 4(2):1--36, 2009.

\bibitem{green-tao-nilmanifolds}
B.~Green and T.~Tao.
\newblock The quantitative behaviour of polynomial orbits on nilmanifolds.
\newblock {\em Ann. of Math. (2)}, 175(2):465--540, 2012.

\bibitem{gtz}
B.~Green, T.~Tao, and T.~Ziegler.
\newblock An inverse theorem for the {G}owers {$U^{s+1}[N]$}-norm.
\newblock {\em Ann. of Math. (2)}, 176(2):1231--1372, 2012.

\bibitem{gmvII}
Y.~Gutman, F.~Manners, and P.~P. Varj\'u.
\newblock The structure theory of nilspaces {II}: {R}epresentation as
  nilmanifolds.
\newblock {\em Trans. Amer. Math. Soc.}, 371(7):4951--4992, 2019.

\bibitem{gmv}
Y.~Gutman, F.~Manners, and P.~P. Varj\'{u}.
\newblock The structure theory of nilspaces {I}.
\newblock {\em J. Anal. Math.}, 140(1):299--369, 2020.

\bibitem{hk-parallel}
B.~Host and B.~Kra.
\newblock Parallelepipeds, nilpotent groups and {G}owers norms.
\newblock {\em Bull. Soc. Math. France}, 136(3):405--437, 2008.

\bibitem{jst-tdsystems}
A.~Jamneshan, O.~Shalom, and T.~Tao.
\newblock The structure of totally disconnected {Host}--{Kra}--{Ziegler}
  factors, and the inverse theorem for the ${U}^k$ {Gowers} uniformity norms on
  finite abelian groups of bounded torsion.
\newblock Preprint, {arXiv}:2303.04860, 2023.

\bibitem{kim-li-tidor}
D.~Kim, A.~Li, and J.~Tidor.
\newblock Cubic {Goldreich}-{Levin}.
\newblock In {\em Proceedings of the 34th annual ACM-SIAM symposium on discrete
  algorithms, SODA 2023, Florence, Italy, January 22--25, 2023}, pages
  4846--4892. Philadelphia, PA: Society for Industrial {and} Applied
  Mathematics (SIAM); New York, NY: Association for Computing Machinery (ACM),
  2023.

\bibitem{milicevic-counter}
L.~Mili{\'c}evi{\'c}.
\newblock Approximately symmetric forms far from being exactly symmetric.
\newblock {\em Comb. Probab. Comput.}, 32(2):299--315, 2023.

\bibitem{milicevic-u56}
L.~Mili{\'c}evi{\'c}.
\newblock Quantitative inverse theorem for {Gowers} uniformity norms
  {{\(\mathsf{U}^5\)}} and {{\(\mathsf{U}^6\)}} in {{\(\mathbb{F}_2^n\)}}.
\newblock {\em Can. J. Math.}, 76(4):1289--1338, 2024.

\bibitem{samorodnitsky}
A.~Samorodnitsky.
\newblock Low-degree tests at large distances.
\newblock In {\em S{TOC}'07---{P}roceedings of the 39th {A}nnual {ACM}
  {S}ymposium on {T}heory of {C}omputing}, pages 506--515. ACM, New York, 2007.

\bibitem{Sudan2000}
M.~Sudan.
\newblock List decoding.
\newblock {\em {ACM} {SIGACT} News}, 31(1):16--27, Mar. 2000.

\bibitem{tao2012higher}
T.~Tao.
\newblock {\em Higher Order Fourier Analysis}.
\newblock Graduate studies in mathematics. American Mathematical Society, 2012.

\bibitem{taozieglerhigh}
T.~Tao and T.~Ziegler.
\newblock The inverse conjecture for the gowers norm over finite fields via the
  correspondence principle.
\newblock {\em Anal. PDE}, 3(1):1--20, 2010.

\bibitem{taoziegler}
T.~Tao and T.~Ziegler.
\newblock The inverse conjecture for the gowers norm over finite fields in low
  characteristic.
\newblock {\em Ann. Comb.}, 16:121--188, 2012.

\bibitem{tidor}
J.~Tidor.
\newblock Quantitative bounds for the {{\(U^4\)}}-inverse theorem over low
  characteristic finite fields.
\newblock {\em Discrete Anal.}, 2022:17, 2022.
\newblock Id/No 14.

\bibitem{tulsiani-wolf}
M.~Tulsiani and J.~Wolf.
\newblock Quadratic {G}oldreich-{L}evin theorems.
\newblock In {\em 2011 {IEEE} 52nd {A}nnual {S}ymposium on {F}oundations of
  {C}omputer {S}cience---{FOCS} 2011}, pages 619--628. IEEE Computer Soc., Los
  Alamitos, CA, 2011.

\end{thebibliography}
\end{document}